\numberwithin{equation}{section}
\title{A sharp necessary condition for rectifiable curves in metric spaces} 
\author{Guy C. David}
\address{Department of Mathematical Sciences\\ Ball State University, Muncie, IN 47306}
\email{gcdavid@bsu.edu}
\author{Raanan Schul}
\address{Department of Mathematics\\ Stony Brook University\\ Stony Brook, NY 11794-3651}
\email{schul@math.sunysb.edu}
\date{\today}
\subjclass[2010]{28A75.}
\begin{document}
\maketitle

\theoremstyle{plain}
\newtheorem{theorem}{Theorem}
\newtheorem{exercise}{Exercise}
\newtheorem{corollary}[theorem]{Corollary}
\newtheorem{scholium}[theorem]{Scholium}
\newtheorem{claim}[theorem]{Claim}
\newtheorem{lemma}[theorem]{Lemma}
\newtheorem{sublemma}[theorem]{Lemma}
\newtheorem{proposition}[theorem]{Proposition}
\newtheorem{conjecture}{theorem}
\newtheorem{maintheorem}{Theorem}
\newtheorem{maincor}[maintheorem]{Corollary}
\renewcommand{\themaintheorem}{\Alph{maintheorem}}

\theoremstyle{definition}
\newtheorem{fact}[theorem]{Fact}
\newtheorem{example}[theorem]{Example}
\newtheorem{definition}[theorem]{Definition}
\newtheorem{remark}[theorem]{Remark}

\numberwithin{equation}{section}
\numberwithin{theorem}{section}

\newcommand{\cG}{\mathcal{G}}
\newcommand{\RR}{\mathbb{R}}
\newcommand{\HH}{\mathcal{H}}
\newcommand{\LIP}{\textnormal{LIP}}
\newcommand{\Lip}{\textnormal{Lip}}
\newcommand{\Tan}{\textnormal{Tan}}
\newcommand{\length}{\textnormal{length}}
\newcommand{\dist}{\textnormal{dist}}
\newcommand{\diam}{\textnormal{diam}}
\newcommand{\vol}{\textnormal{vol}}
\newcommand{\rad}{\textnormal{rad}}

\def\bA{{\mathbb{A}}}
\def\bB{{\mathbb{B}}}
\def\bC{{\mathbb{C}}}
\def\bD{{\mathbb{D}}}
\def\bR{{\mathbb{R}}}
\def\bS{{\mathbb{S}}}
\def\bO{{\mathbb{O}}}
\def\bE{{\mathbb{E}}}
\def\bF{{\mathbb{F}}}
\def\bH{{\mathbb{H}}}
\def\bI{{\mathbb{I}}}
\def\bT{{\mathbb{T}}}
\def\bZ{{\mathbb{Z}}}
\def\bX{{\mathbb{X}}}
\def\bP{{\mathbb{P}}}
\def\bN{{\mathbb{N}}}
\def\bQ{{\mathbb{Q}}}
\def\bK{{\mathbb{K}}}
\def\bG{{\mathbb{G}}}

\def\nrj{{\mathcal{E}}}
\def\cA{{\mathscr{A}}}
\def\cB{{\mathscr{B}}}
\def\cC{{\mathscr{C}}}
\def\cD{{\mathscr{D}}}
\def\cE{{\mathscr{E}}}
\def\cF{{\mathscr{F}}}
\def\cB{{\mathscr{G}}}
\def\cH{{\mathscr{H}}}
\def\cI{{\mathscr{I}}}
\def\cJ{{\mathscr{J}}}
\def\cK{{\mathscr{K}}}
\def\Layer{{\rm Layer}}
\def\cM{{\mathscr{M}}}
\def\cN{{\mathscr{N}}}
\def\cO{{\mathscr{O}}}
\def\cP{{\mathscr{P}}}
\def\cQ{{\mathscr{Q}}}
\def\cR{{\mathscr{R}}}
\def\cS{{\mathscr{S}}}
\def\Up{{\rm Up}}
\def\cU{{\mathscr{U}}}
\def\cV{{\mathscr{V}}}
\def\cW{{\mathscr{W}}}
\def\cX{{\mathscr{X}}}
\def\cY{{\mathscr{Y}}}
\def\cZ{{\mathscr{Z}}}

\def\tQ{{\tilde{Q}}}
	\def\ctQ{{\tilde{\cQ}}}
  \def\bt{{\tilde{\beta}}}
  \def\btc{{\tilde{\beta}_{c}}}
	\def\btcfm{{\tilde{\beta}_{\mathcal{F}_M}}}
	\def\btcf{{\tilde{\beta}_{\mathcal{F}}}}
	\def\bh{{\hat{\beta}_c}}
  \def\bc{{\beta_{c}}}
	\def\bcv{{\beta_{c,v}}}
	\def\bi{{\beta_{\infty}}}
	\def\bgh{{\beta_{GH}}}
	\def\bti{{\tilde{\beta}_{\infty}}}
	\def\bx{{\beta_{X}}}
  \def\del{\partial}
  \def\diam{{\rm diam}}
	\def\HHH{{\HH^1_\infty}}
	\def\HHHt{{(\tilde{\HH}^1_\infty)^3}}
	\def\HHN{{(\HH^1_\infty)^N}}
	\def\HT{{(\HH^1_\infty)^2}}
	\def\VV{{\mathcal{V}}}
	\def\FF{{\mathcal{F}}}
	\def\QQ{{\mathcal{Q}}}
	\def\BB{{\mathcal{B}}}
	\def\XX{{\mathcal{X}}}

  \def\eb{{\epsilon_\beta}}
  \def\del{\partial}
  \def\diam{{\rm diam}}
  \def\children{{\rm \mathcal{C}}}
	\def\image{{\rm Image}}
	\def\domain{{\rm Domain}}
  \def\dist{{\rm dist}}

\newcommand{\RS}[1]{{  \color{red} \textbf{Raanan:} #1}}
\newcommand{\GCD}[1]{{  \color{blue} \textbf{Guy:} #1}}

\begin{abstract}
In his 1990 Inventiones paper, P. Jones characterized subsets of rectifiable curves in the plane, using a multiscale sum of what is now known as Jones $\beta$-numbers, numbers measuring flatness in a given scale and location.  This work was generalized to $\bR^n$ by Okikiolu, to Hilbert space by the second author, and has many variants in a variety of metric settings.  Notably, in 2005, Hahlomaa gave a sufficient condition for a subset of a metric space to be contained in a rectifiable curve.   We prove the sharpest possible converse to Hahlomaa's theorem for doubling curves, and then deduce some corollaries for subsets of metric and Banach spaces, as well as the Heisenberg group.
\end{abstract}

\tableofcontents

\section{Introduction}

\subsection{Background}

In an Inventiones paper \cite{Jones-TSP}, Peter Jones proved the following theorem for sets in $\bR^2$, later generalized by Kate Okikiolu  to sets in $\bR^n$ \cite{Ok92}.
For  sets $E,B\subseteq \bR^{n}$,  define
\begin{equation}
\beta_{E,\infty}^{\rm Euc}(B)=\frac2{\diam(B)}\inf\limits_L\sup\{\dist(y,L):y\in E\cap B\}
\label{e:euclidean-beta}
\end{equation}
where $L$ ranges over lines  in $\bR^{n}$.

\begin{theorem}(Jones: $\bR^2$  \cite{Jones-TSP}; Okikiolu: $\bR^n$  \cite{Ok92}) Let  $n\geq 2$.
There is a $C=C(n)$ such that the following holds.
Let $E\subset \bR^n$.
Then there is  a connected set $\Gamma\supseteq E$ such that
\begin{equation}
\cH^{1}(\Gamma)\lesssim_{n} \diam(E)+\sum_{Q\in \Delta\atop Q\cap E\neq\emptyset} \beta_{E,\infty}^{\rm Euc}(3Q)^{2}\diam(Q).
\label{e:betaE}
\end{equation}
Conversely, if $\Gamma$ is connected and $\cH^{1}(\Gamma)<\infty$, then 
\begin{equation}
 \diam(\Gamma)+\sum_{Q\in \Delta\atop Q\cap \Gamma\neq\emptyset} \beta_{\Gamma,\infty}^{\rm Euc}(3Q)^{2}\diam(Q) \lesssim_{n}  \cH^{1}(\Gamma).
\label{e:beta_gamma}
\end{equation}
\label{t:TST}
\end{theorem}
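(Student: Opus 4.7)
The plan is to handle the two directions separately: the lower bound \eqref{e:beta_gamma} (the necessary condition) by a geometric ``extra-length'' argument, and the upper bound \eqref{e:betaE} (the sufficient condition) by a multi-scale construction of the connecting curve.

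For the necessary direction \eqref{e:beta_gamma}, I would first note that the diameter term is trivial, since $\diam(\Gamma)\leq \cH^{1}(\Gamma)$ whenever $\Gamma$ is connected. The core of the argument is a Pythagorean-type estimate: if $\Gamma$ is connected, intersects a dyadic cube $Q$ of diameter $d$, and $\beta_{\Gamma,\infty}^{\rm Euc}(3Q)\geq \beta$, then one can find points $x_1,x_2,x_3\in\Gamma\cap 3Q$ with $x_3$ at distance $\gtrsim \beta d$ from the line through $x_1,x_2$. Connectedness forces $\Gamma$ to contain arcs linking these three points, yielding an excess over the straight line of length at least $c\beta^{2}d$ (this is where the square comes in). The subtle step is summing the contributions across all dyadic cubes $Q$ without overcounting. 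One can either use a corona-type stopping-time decomposition, grouping cubes by the scale at which $\Gamma$ is locally flat, or argue directly that at each scale each piece of arc-length in $\Gamma$ is charged by only boundedly many cubes.

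For the sufficient direction \eqref{e:betaE}, I would construct $\Gamma$ by iterating over dyadic scales. Start with a trivial approximation (a single net point, or a segment of length $\diam(E)$), and at each generation $k$ pick a maximal $2^{-k}$-separated net $X_k\subset E$ and build a polygonal curve $\Gamma_k$ whose vertices include $X_k$, by inserting segments along the best-approximating lines $L_Q$ for dyadic cubes $Q$ with side $\sim 2^{-k}$. The passage from $\Gamma_{k-1}$ to $\Gamma_k$ near a cube $Q$ should add length at most $\diam(Q)+C\beta_{E,\infty}^{\rm Euc}(3Q)^{2}\diam(Q)$: shifting a join point by a transverse distance $\beta\diam(Q)$ while preserving longitudinal progress costs only $O(\beta^{2}\diam(Q))$, again by Pythagoras. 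Summing over all generations and taking a Hausdorff limit yields a connected $\Gamma\supseteq E$ with length bounded by the right-hand side of \eqref{e:betaE}.

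The main obstacle is the forward direction. Maintaining connectedness of each $\Gamma_k$ while controlling the incremental length requires that the local best-line pieces from neighbouring cubes be attached at common net points, so that no ``gluing'' error is incurred, and that each arc of the final $\Gamma$ be touched by only boundedly many cubes' refinements. This combinatorial bookkeeping, together with the quantitative Pythagorean inequality, is the technical heart of Jones's and Okikiolu's original arguments and is precisely what forces the sharp exponent $2$ on $\beta$.
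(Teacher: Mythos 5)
This theorem is quoted in the paper from Jones and Okikiolu and is not proved there, so there is no in-paper proof to compare your attempt against; what follows assesses your outline on its own terms and against the classical arguments (and against the paper's proof of Theorem \ref{thm:upperbound}, which is the metric-space analogue of \eqref{e:beta_gamma} and makes visible exactly what your sketch omits).

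You correctly identify the two standard mechanisms: the Pythagorean inequality converting a transverse deviation $\beta\,\diam(Q)$ into an excess length $\gtrsim\beta^{2}\diam(Q)$, and the net-based farthest-insertion construction for \eqref{e:betaE}. But for the necessary direction the step you flag as ``subtle'' is in fact the entire proof, and neither remedy you propose suffices as stated. Bounded overlap of cubes at a fixed scale only yields $\sum_{\ell(Q)=2^{-n}}\beta_{\Gamma,\infty}^{\rm Euc}(3Q)^{2}\diam(Q)\lesssim\HH^{1}(\Gamma)$ for each $n$ separately, and this diverges when summed over $n$; the content of the theorem is precisely that the $\beta$'s decay on average across scales, which no per-scale overlap count can capture. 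The classical argument (and the one adapted in Sections \ref{sec:cubesfiltrations}--\ref{sec:flatarcsballs} of this paper) instead builds a filtration of $\Gamma$ into arcs, observes that the excess $\sum_{\eta\in\children(\tau)}d(s(\eta),f(\eta))-d(s(\tau),f(\tau))$ telescopes over the whole filtration to something bounded by $\length(\gamma)$, uses this to absorb the ``non-flat'' cubes, and then must handle the ``flat'' cubes --- where no individual cube produces a definite excess and the Pythagorean gain is absent --- by a separate martingale/weight argument (cf.\ Propositions \ref{prop:219} and \ref{prop:flatonefamily}). Your sketch contains no analogue of this second, flat-cube half, and without it the sum cannot be closed. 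The forward direction \eqref{e:betaE} is outlined at the right level of generality, but, as you acknowledge, the gluing and bounded-multiplicity bookkeeping is asserted rather than carried out, so that half too remains a plan rather than a proof.
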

Here, $\cH^k$ is the $k-$dimensional Hausdorff measure, and $\Delta$ is the collection of dyadic cubes in $\bR^n$.
Given two functions $a$ and $b$ into $\bR$ we say 
$a\lesssim b$ with constant $C$, when there exists a constant $C$ such that $a\leq C b$; the subscript $\lesssim_{n}$ indicates the dependence of the implied constant on $n$. We say that $a\sim b$ if  $a\lesssim b$ and $b\lesssim a$.

Equation \eqref{e:beta_gamma}, whose variants are the main subject of this paper, gives a ``quantitative flatness'' statement for rectifiable curves in $\bR^n$. In other words, a rectifiable curve in $\bR^n$ must lie close to a line at most locations and scales, in a very precise manner. This may be viewed as a quantitative version of the qualitative statement that a rectifiable curve has linear tangents almost everywhere along its length. In addition to the clear geometric information it provides, \eqref{e:beta_gamma} and its variants have had an important influence on the study of singular integrals from the 1980's onward. As a small sample of these connections, we point the reader to the works of Jones \cite{Jones87}, David-Semmes \cite{DS-sing-int-book, DS-blue-book} and Tolsa \cite{Tolsa-book}, as well as the survey \cite{Pajot-book}. There are also connections to recent developments in the study of harmonic measure (see, e.g., \cite{seven-author} and the many advances by the same authors).

Theorem \ref{t:TST} was later generalized to sets $E$ lying in Hilbert space \cite{Sc07_hilbert}, which requires replacing $\Delta$ by a multiscale collection of balls centered on the set in question. 
There are also variants in some metric settings \cite{Hahlomaa-non-AR, Hahlomaa-AR, Sc07_metric}, including the Heisenberg group \cite{LS14, LS15} and a collection of non-Euclidean metric spaces generalizing a construction of Laakso \cite{GCD-RS-TSP-Laakso}.  
We will return to some of these variants in more detail in the sections below, but for now we note that these variants require 
changes in the definition of the $\beta$-number (as there are no longer Euclidean lines), as well as for modifying the exponent $2$ in the sums analogous to those in \eqref{e:betaE} and \eqref{e:beta_gamma}.

\begin{remark}
A point that the authors find intriguing is that, for these variants of Theorem \ref{t:TST}, even when analogs of both \eqref{e:betaE} and \eqref{e:beta_gamma} are known, the exponents of $\beta$ in these results are generally not known to match. Hence, one does not always achieve a characterization of subsets of rectifiable curves by this method.
To our knowledge, the only settings where these exponents are known to match are in Hilbert spaces \cite{Sc07_hilbert}, in  Ahlfors $1$-regular metric spaces \cite{Hahlomaa-AR, Sc07_metric} (where an average replaces the supremum in the definition of $\beta$), and, to some extent, in the graph inverse limit spaces studied in \cite{GCD-RS-TSP-Laakso}. In other settings, such as the Heisenberg group and general Banach spaces, the situation is not completely clear.\footnote{A very recent preprint by Sean Li \cite{Li19}, posted to the arXiv on the same day as this paper, gives a version of Theorem \ref{t:TST} with matching exponents in general Carnot groups, including the Heisenberg group.}

\end{remark}

We  now wish to state a metric analogue of the first half of Theorem \ref{t:TST}. 
Given a metric space  $E$ and a ball $B=B(p,r)$, let
\begin{equation}\label{e:beta-metric}
\beta_\infty^E(B)^2  = r^{-1}\sup\{ \del(x,y,z) : x,y,z\in E \cap B\}. 
\end{equation}
(If the metric space is understood, we will drop the superscript.)

The quantity $\del(x,y,z)$ is is the defect in the triangle inequality, and is defined in Section \ref{sec:beta};  for now let us say that for a triple $x,y,z$ such that $\dist(x,y)\leq \dist(y,z)\leq \dist(x,z)$, the quantity 
$\del(x,y,z)$ is given by
  $\dist(x,y)+\dist(y,z)-\dist(x,z)$.
See 
Section \ref{sec:beta} for a more detailed definition of $\del$.

In particular, $\beta_\infty^E(B)$ gives a measurement of ``flatness'' that can be studied in general metric spaces without notions of lines. Thus, one can use this notion of $\beta$-number to study the validity of metric space analogs of Theorem \ref{t:TST}. In that case, one must replace the family of dyadic cubes in $\bR^n$ by a ``multiresolution family of balls'' $\cB$, as defined in in \ref{d:multiresolution} below.

Given these modifications, Hahlomaa proved the following analogue of \eqref{e:betaE}.

\begin{theorem}[\cite{Hahlomaa-non-AR}, Theorem 5.3]\label{t:Hah}
Let $E$ be a bounded metric space and $\cB$ a multiresolution family such that
\begin{equation}\label{eq:hahlomaa}
\sum_{B\in \cB} \bi(B)^2\diam(B) < \infty
\end{equation}
Then there is a set $A\subseteq [0,1]$ and a surjective Lipschitz map $f:A\rightarrow E$ with Lipschitz constant bounded by
\begin{equation}\label{eq:hahlomaa2}
C\left(\diam(E) + \sum_{B\in \cB} \bi(B)^2\diam(B)\right).
\end{equation}
The constant $C>0$ is absolute.
\end{theorem}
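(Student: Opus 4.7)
The plan is to build, at each dyadic scale $r_n := 2^{-n}\diam(E)$, a nested net $X_0 \subseteq X_1 \subseteq \cdots \subseteq E$ together with an ordered traversal $\sigma_n$ of $X_n$, in such a way that the polygonal path $P_n$ through $\sigma_n$ has total length bounded by the length of $P_{n-1}$ plus a quantity controlled by the scale-$r_n$ contribution to the $\beta$-sum. Passing to a limit will yield a Lipschitz parametrization by a subset of $[0,1]$.

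First I would take $X_n$ to be a maximal $r_n$-separated subset of $E$ containing $X_{n-1}$; each new point of $X_{n+1}\setminus X_n$ is assigned a unique \emph{parent} in $X_n$ within distance $r_n$. Next, by induction on $n$, I would produce an ordering $\sigma_n$ of $X_n$ such that (i) $\sigma_n$ refines $\sigma_{n-1}$ (points of $X_{n-1}$ appear in the same order) and (ii) the new points inserted near any parent $x\in X_{n-1}$ are placed adjacent to $x$ in $\sigma_n$, in a locally greedy order (for instance, by running a nearest-neighbor sweep on the children of $x$, which all lie in a ball of radius $\lesssim r_{n-1}$ around $x$).

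The core of the argument is the length estimate. When we replace a segment $u \to x \to v$ of $P_{n-1}$ with a refined segment $u \to y_{i_1} \to \cdots \to x \to \cdots \to y_{i_k} \to v$ through the children $y_j$ of $x$, the excess length is bounded by a telescoping sum of defects of the form $\del(\cdot,\cdot,\cdot)$ for triples of points lying in a ball $B(x,Cr_{n-1})$. Each such defect is at most $\beta_\infty(B)^2 \cdot r_{n-1}$ for an appropriate ball $B$ of the multiresolution family $\cB$ at scale $r_{n-1}$, because $\delta(x,y,z) = r\cdot \beta_\infty(B(\cdot,r))^2$ up to constants by definition \eqref{e:beta-metric}. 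Summing the excess over all parents $x$ at all scales, and using the bounded overlap inherent in a multiresolution family to keep each ball of $\cB$ from being charged too many times, reproduces exactly the sum $\sum_{B\in\cB}\bi(B)^2\diam(B)$ plus a $\diam(E)$ starting term.

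The main obstacle is the combinatorial control of the insertion step: when a parent has many children, one must show that \emph{some} ordering of those children inflates length only by a sum of defects that can be absorbed into the $\beta$-sum, and that the many balls implicitly used in this bookkeeping can be matched injectively (up to bounded multiplicity) to balls in $\cB$ at the right scale. Once the length estimates are in hand, a standard diagonal/Arzel\`a--Ascoli argument parametrizes $P_n$ by a subset $A_n\subseteq[0,1]$ with Lipschitz constant uniformly bounded by \eqref{eq:hahlomaa2}, and the maps $f_n:A_n\to X_n$ converge on $A:=\bigcap_n A_n$ to a Lipschitz $f:A\to \bar E$ whose image is closed and contains the dense set $\bigcup_n X_n$; reducing to the case $E$ complete (and hence compact) yields surjectivity onto $E$.
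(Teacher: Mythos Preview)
The paper does not prove this theorem at all: it is quoted as Theorem~5.3 of \cite{Hahlomaa-non-AR}, and the only argument the authors supply is the one-line observation (just after the statement and in Remark~\ref{r:sep-trip}) that Hahlomaa's Menger-curvature $\beta$ is dominated by the $\bi$ used here, so his theorem implies the version stated. There is therefore no ``paper's own proof'' to compare your sketch against; what you have written is an attempt to reprove Hahlomaa's result from scratch.

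Your outline is in the right spirit---nested nets, a refining traversal, telescoping length increments---but the step you flag as the ``main obstacle'' is a genuine gap, not just bookkeeping. With your insertion rule (place each child adjacent to its parent $x$), the excess length when you splice the children of $x$ into the edge $u\to x\to v$ necessarily involves the old neighbors $u$ and $v$, and there is no reason $u$ or $v$ should lie in any ball $B(x,Cr_{n-1})$: they are simply whatever came before and after $x$ in $\sigma_{n-1}$, possibly at distance $\sim\diam(E)$. So the claim that ``the excess length is bounded by a telescoping sum of defects $\del(\cdot,\cdot,\cdot)$ for triples of points lying in a ball $B(x,Cr_{n-1})$'' is unjustified, and in fact fails for this insertion scheme; the best trivial bound on $d(u,y_1)-d(u,x)$ is $d(x,y_1)\le r_{n-1}$, not $\bi(B)^2 r_{n-1}$.

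Hahlomaa's actual argument avoids this by not tying the insertion location to the parent. Roughly, new points are inserted into the edge of the current tour that minimizes the insertion cost $\del_1(a,y,b)$, and one then argues---using the ordering machinery (his Lemma~2.3, quoted in this paper as Lemma~\ref{lem:Hahlomaa}) applied to the net points inside a ball of the correct scale---that this minimal cost is controlled by $\bi(B)^2\diam(B)$ for a ball $B\in\cB$ associated to $y$. The matching of insertions to balls then really is one-to-one (one ball per net point per scale), and no doubling hypothesis is needed. If you want to push your own scheme through, you would need either to switch to a cost-minimizing insertion or to prove an additional lemma guaranteeing that consecutive points of $\sigma_{n-1}$ are always at distance $O(r_{n-1})$, which is itself nontrivial and essentially requires the $\beta$-control you are trying to exploit.
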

The notion of a multiresolution family $\cB$ is defined in Section \ref{d:multiresolution}. 

In fact, as stated, Theorem 5.3 of \cite{Hahlomaa-non-AR} uses a different $\beta$-number, defined using Menger curvature, rather than our $\bi$. However, Hahlomaa's definition of $\beta$ is bounded above by an absolute constant times that in \eqref{e:beta-metric}. See Remark \ref{r:sep-trip} below. Hence, the theorem above follows immediately from his work.
 We note that Menger curvature was further used as a sufficient condition for 1-rectifiability by L\'eger \cite{Leger} in the Euclidean setting and Hahlomma \cite{Hahlomaa-does-leger} in the metric setting. The book \cite{Pajot-book} gives a nice survey of this up to the time it was written. 
 
\begin{remark}\label{rmk:counterex}
The converse to Theorem \ref{t:Hah} is false, as is demonstrated in \cite[Example 3.3.1]{Sc07_survey}. In that example, a sequence of rectifiable curves $\{\Gamma_n\}$ is constructed in the Banach space $(\RR^2, \|\cdot\|_{\ell^1})$ so that $\HH^1(\Gamma_n)=2$ for all $n$ but the analog of sum \eqref{eq:hahlomaa2} tends to infinity with $n$.
\end{remark}

The goal of the present paper is to prove the sharpest possible converse of this result for doubling metric spaces, and then deduce a few corollaries for specific metric spaces and alternative notions of flatness.

\begin{remark}\label{r:sep-trip}
The definition of  $\beta_\infty$ which is needed for the proof of Theorem \ref{t:Hah} only requires the $\sup$ to be over triples $\{x,y,z\}$ whose mutual distances are $\gtrsim$ the radius of the ball $B$ in question.  It is in this situation that $\partial\{x,y,z\}$ is proportional to the Menger curvature of $\{x,y,z\}$, which is how Hahlomaa had stated his result. This is discussed in more detail see Remark 2.3 in \cite{Sc07_survey} and \cite{Hahlomaa-non-AR} . 
 \end{remark}

\subsection{New results}
The main theorem of this paper, Theorem \ref{thm:upperbound}, is a converse to Theorem \ref{t:Hah} for doubling spaces, to the extent allowed by Remark \ref{rmk:counterex}. To our knowledge, it is the first theorem of this type for rectifiable curves in general doubling metric spaces, i.e., which states that all rectifiable curves in doubling metric spaces admit a quantitative local flatness condition analogous to that in \eqref{e:beta_gamma}.
Further below, we apply Theorem \ref{thm:upperbound} to deduce four corollaries, Corollaries \ref{cor:GHTST}, \ref{cor:ellinfty},  \ref{cor:Banach} and \ref{cor:Hcurves}.

\begin{maintheorem}\label{thm:upperbound}
Let $\Gamma$ be a connected, doubling metric space. Let $\cB$ be a multiresolution family of balls in $\Gamma$, where the inflation factor for the balls of $\cB$ is $A>1$ (see Definition \ref{d:multiresolution}). Then
$$\sum_{B\in \cB} \beta_\infty^\Gamma(B)^p \diam(B) \leq C_p\HH^1(\Gamma)$$
for all $p>2$.
The constant $C_p$ depends only on $p$, the doubling constant of $\Gamma$, and the constant $A$.
\end{maintheorem}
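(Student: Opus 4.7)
Assume $\HH^1(\Gamma)<\infty$, since otherwise the inequality is vacuous. Then the connected doubling space $\Gamma$ is a rectifiable (Lipschitz) curve, and I fix an arc-length parametrization $\gamma:[0,\ell]\to\Gamma$, where $\ell=\HH^1(\Gamma)$. For each $B\in\cB$ meeting $\Gamma$, I select a near-optimal witness triple $x_B,y_B,z_B\in\Gamma\cap B$ with $\partial(x_B,y_B,z_B)\geq \tfrac{1}{2} r_B\,\beta_\infty^\Gamma(B)^2$; writing $s_B\leq t_B\leq u_B$ for the arc-length parameters of the three points, one has an inequality of the form
\[
\partial(x_B,y_B,z_B)\;\lesssim\;(u_B-s_B)-d(x_B,z_B),
\]
so the triangle defect at $B$ is controlled by the excess of arc length over chord length along the \emph{witness arc} $I_B:=[s_B,u_B]\subset[0,\ell]$. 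Since $\gamma$ is $1$-Lipschitz and since the optimizing triples have all pairwise distances $\gtrsim r_B\beta_\infty^\Gamma(B)^2$ (from the elementary inequality $\partial\leq 2\min_{i\neq j}d(p_i,p_j)$), the triples are genuinely non-degenerate, which will be important below.

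Next I rewrite the quantity to be bounded,
\[
\sum_{B\in\cB}\beta_\infty^\Gamma(B)^p\diam(B)\;\sim\;\sum_{B\in\cB}\beta_\infty^\Gamma(B)^{p-2}\,\partial_B,
\]
with $\partial_B:=\partial(x_B,y_B,z_B)$. Bounding $\sum_B\partial_B$ alone would be sufficient, but this is precisely the $p=2$ sum, which can blow up by Remark \ref{rmk:counterex}. The extra factor $\beta_\infty^\Gamma(B)^{p-2}$ must therefore do the real work. The plan is to split $\cB$ dyadically into buckets $\cB_{k,j}=\{B : r_B\sim A^{-k},\ \beta_\infty^\Gamma(B)\sim 2^{-j}\}$ and estimate each bucket separately. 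At fixed $k$ and $j$, the doubling condition on $\Gamma$ (applied to the non-degenerate witness arcs $I_B$, whose lengths are comparable to $r_B$) produces a uniform bound on the overlap multiplicity of the $I_B$, so $\sum_{B\in\cB_{k,j}}\partial_B\lesssim \ell$; multiplying by $\beta_\infty^\Gamma(B)^{p-2}\sim 2^{-j(p-2)}$ gives at most $2^{-j(p-2)}\ell$ per bucket. Summing geometrically in $j$ uses $p>2$ crucially. A careful account of how many dyadic scales $k$ can simultaneously see witness arcs through a given point of $[0,\ell]$—controlled again by doubling and by the multiresolution inflation factor $A$—then produces the claimed bound $C_p\HH^1(\Gamma)$.

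The main obstacle lies in the overlap estimate. In Hilbert space, the Pythagorean identity forces $\partial\sim h^2/b$ (with $h$ the triangle height and $b$ the base), which is exactly what makes the exponent $p=2$ work; in a general doubling metric space (even in $\ell^1$), $\partial$ can scale linearly in the triangle size, so the strict inequality $p>2$ must be used to compensate. The delicate point is to distribute each infinitesimal increment of excess arc length along $\gamma$ among at most boundedly many balls per dyadic scale, with the implicit constant depending only on the doubling constant of $\Gamma$ and on $A$. If this bookkeeping is carried out correctly, the geometric series in $j$ converges for $p>2$ and the sum over scales $k$ can be telescoped via the multiresolution structure. I expect the precise formulation and verification of this overlap charging to be the main technical step of the proof.
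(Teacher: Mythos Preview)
Your approach captures the right intuition for \emph{part} of the argument but has a genuine gap at exactly the step you flag as ``the main technical step.''

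The claim that ``the number of dyadic scales $k$ that can simultaneously see witness arcs through a given point of $[0,\ell]$'' is controlled by doubling and $A$ is false: a point $\gamma(t)$ lies in balls of every scale, and nothing forces the witness arcs $I_B$ at different scales to avoid $t$. Hence after summing geometrically in $j$ your final sum over $k$ simply diverges. The paper deals with this, for each $\beta$-bucket $M$ (i.e.\ $\bi^2\sim 2^{-M}$), by further splitting into $\sim KM$ subfamilies in each of which consecutive scales are separated by a factor $2^{KM}$; within one subfamily the arcs genuinely form a filtration and the excess-length sums telescope (Lemma~\ref{lemma:onefamily}). The price is a factor $KM$ per bucket, but $M\cdot 2^{-(p-2)M/2}$ is still summable for $p>2$ (this is the proof of Proposition~\ref{prop:nonflat}).

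More seriously, that telescoping argument only works for what the paper calls \emph{non-flat} balls: those $B$ for which some single arc $\tau\in\Lambda(Q(B))$ already has $\bti(\tau)\gtrsim \bi(B)$. For \emph{flat} balls---where every individual arc through $B$ is nearly geodesic, yet $\bi(B)$ is large because \emph{distinct} arcs are far apart---your near-optimal witness triple $(x_B,y_B,z_B)$ will have its three points on different passes of $\gamma$ through $B$. Then the witness interval $[s_B,u_B]$ can have length comparable to $\ell$ rather than $r_B$ (so your assertion $|I_B|\sim r_B$ fails), and the bound $\partial_B\lesssim (u_B-s_B)-d(\gamma(s_B),\gamma(u_B))$ is useless after summing. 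The paper treats flat balls by a completely different martingale argument (Proposition~\ref{prop:219} and Proposition~\ref{prop:flatonefamily}): inside a flat cube $Q$, the presence of a second arc far from the central one forces the maximal subcubes plus remainder to have total size at least $(1+c\bi^2)\diam(Q)$, and one then propagates weights down the cube tree.

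In short: your excess-arc-length bookkeeping is essentially the paper's argument for non-flat balls, but it needs the $KM$-subfamily filtration structure to make the telescoping honest, and it cannot handle flat balls at all. The flat/non-flat dichotomy and the martingale step for flat balls are the missing ideas.
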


As noted in Remark \ref{rmk:counterex}, this theorem is sharp in the sense that for $p=2$ it is false.

\begin{remark}
The authors conjecture that with similar techniques to those of \cite{Sc07_hilbert} one would get that Theorem \ref{thm:upperbound} holds for non-doubling $\Gamma$ as well.
\end{remark}

We now go on to describe some corollaries of Theorem \ref{thm:upperbound} that will be proven in the paper.

\subsubsection{Gromov-Hausdorff $\beta$-numbers}\label{subsub:GH}
In \cite{DT99}, the authors define another measure of flatness for subsets of arbitrary metric spaces, different from $\bi$. Their notion is essentially a normalized Gromov-Hausdorff distance to Euclidean balls, and applies in all dimensions, not just dimension one. 

(Note: Although for most of the paper $B(z,r)$ will refer to a closed ball in a metric space, for the purposes of agreement with \cite{DT99} in this subsection and in Section \ref{sec:GHTST} we write $B(z,r)$ for an open ball.)

We take the following definitions from \cite[Section 2]{DT99}, specializing to the $1$-dimensional case and making some minor changes to the notation. Let $(M,d)$ be a metric space,  $B(z,r)$ a ball in $M$, and consider (not necessarily continuous) mappings
$$ I: B(z,r)\rightarrow (-r,r)\subseteq \RR.$$
For such a mapping $I$, let
\begin{equation}\label{e:alpha}
\alpha(I) = \epsilon(I) + \delta(I)
\end{equation}

where
$$ \epsilon(I) = \sup\{ ||I(x)-I(y)|-d(x,y)|: x,y\in B(z,r)\},$$
and
$$ \delta(I) = \sup\{ \dist(u,I(B(z,r))): u\in (-r,r)\}.$$
Lastly, we set
$$ \alpha(B(z,r)) =r^{-1} \inf_I \alpha(I),$$
where the infimum is taken over all mappings $I:B(z,r)\rightarrow (-r,r)$.

In \cite{DT99}, the smallness or summability of (the $n$-dimensional version of) $\alpha(B)$ for a metric space is taken as an assumption that is then used to construct interesting embeddings into low-dimensional Euclidean spaces. By contrast, we obtain the summability of $\alpha(B)^{2+\epsilon}$ as a necessary condition for rectifiable curves in doubling metric spaces.

Let $\cB_0 \subseteq \cB$ is the collection of balls in $\cB$ with diameters at least one tenth that of $\Gamma$.
The following corollary is proven in Section \ref{sec:GHTST}.
\begin{maincor}\label{cor:GHTST}
Let $\Gamma$ be a doubling curve in a metric space, with a multiresolution family of balls $\cB$ having inflation factor $A\geq 10$. Then
$$ \sum_{B\in \cB\setminus \cB_0} \alpha(B)^p \rad(B) \lesssim \HH^1(\Gamma),$$
for all $p>2$.
The implied constant depends only on $p$, $A$, and the doubling constant of $\Gamma$. 
\end{maincor}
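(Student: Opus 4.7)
Proof plan. The plan is to reduce Corollary \ref{cor:GHTST} to Theorem \ref{thm:upperbound} by establishing the pointwise comparison
\begin{equation}\label{eq:alphabeta-plan}
\alpha(B) \;\leq\; C\,\beta_\infty^\Gamma(\lambda B)
\end{equation}
for every $B \in \cB \setminus \cB_0$, where $\lambda > 1$ and $C > 0$ depend only on the doubling constant of $\Gamma$ and the inflation factor $A$. Given \eqref{eq:alphabeta-plan}, the corollary is immediate: raise both sides to the $p$-th power, sum against $\rad(B)$, and use the bounded overlap of $\{\lambda B\}_B$ (by doubling) to reduce the right-hand side to a constant multiple of $\sum_{B'} \beta_\infty^\Gamma(B')^p \diam(B')$, which is bounded by $\HH^1(\Gamma)$ via Theorem \ref{thm:upperbound}.

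To prove \eqref{eq:alphabeta-plan}, fix $B=B(x_0,r)$ and set $\eta := \beta_\infty^\Gamma(\lambda B)$. The constant map $I\equiv 0$ gives $\alpha(B)\leq 3$ trivially, so one may assume $\eta$ is smaller than a fixed threshold $c_0$ (otherwise $\alpha(B)\leq 3\leq (3/c_0)\eta$). In this regime the goal is to construct $I\colon B(x_0,r)\to(-r,r)$ with $\epsilon(I)+\delta(I)\lesssim \eta r$.

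Because $B\notin \cB_0$ we have $r\ll \diam(\Gamma)$, and connectedness plus doubling of $\Gamma$ allow the selection of two reference points $a,b\in \Gamma\cap\lambda B$ with $d(x_0,a),d(x_0,b)\approx \lambda r$, $d(a,b)\gtrsim \lambda r$, joined by a subarc of $\Gamma$ passing through $B(x_0,r)$. Define
\[
I(y) \;:=\; \tfrac{1}{2}\bigl(d(a,y)-d(b,y)\bigr), \qquad y\in B(x_0,r).
\]
For the near-isometry estimate, for each $y,z\in B(x_0,r)$ apply the triangle-defect bound (at most $\eta^2\lambda r$) to the triples $\{a,y,z\}$ and $\{b,y,z\}$, both of which lie in $\lambda B$. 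Because $\lambda$ is fixed and $a,b$ lie on approximately opposite sides of $B$, the two inequalities combine with aligned signs (the signs of $d(a,y)-d(a,z)$ and $d(b,y)-d(b,z)$ being opposite) to give $\bigl|\,|I(y)-I(z)|-d(y,z)\,\bigr| \lesssim \eta^2 \lambda r$, hence $\epsilon(I)\lesssim \eta r$. For the near-surjectivity estimate, trace the chosen $a$-to-$b$ subarc: along it, $d(a,\cdot)-d(b,\cdot)$ varies continuously from $\approx -d(a,b)$ to $\approx d(a,b)$, so the intermediate-value theorem combined with the near-isometry just proved shows that $I(\Gamma\cap B(x_0,r))$ is $\lesssim \eta r$-dense in $(-r,r)$, giving $\delta(I)\lesssim \eta r$.

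The main obstacle is the near-surjectivity step: one must arrange that the $a$-to-$b$ subarc actually penetrates $B(x_0,r)$ deeply enough that $I$ sweeps across nearly all of $(-r,r)$. A priori, a rectifiable curve can enter and exit $B$ repeatedly, so without further structure $I$ could easily miss a substantial portion of $(-r,r)$. This is precisely where the small-$\beta$ hypothesis is used: it forces $\Gamma\cap\lambda B$ to be approximately one-dimensional and line-like, ruling out branching or looping back, while doubling allows a quantitative choice of $(a,b)$. A secondary technical point is the calibration of the affine normalisation so that $I$ indeed lands in $(-r,r)$; this is arranged by the choice of $a,b$ together with a translation. Once \eqref{eq:alphabeta-plan} is established, the corollary follows as sketched in the first paragraph.
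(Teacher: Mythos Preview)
Your pointwise comparison \eqref{eq:alphabeta-plan} is false, and this breaks the whole reduction. Take $\Gamma=[0,100]\subset\RR$ and $B=B(0,r)$ for any $r<10$. Then $\Gamma\cap\lambda B=[0,\lambda r]$ is a geodesic segment, so $\beta_\infty^\Gamma(\lambda B)=0$; yet $\Gamma\cap B$ has diameter only $r$, so every map $I:\Gamma\cap B\to(-r,r)$ satisfies $\delta(I)\geq r/2$, giving $\alpha(B)\geq 1/2$. The step that fails in your sketch is precisely the selection of $a,b$ on ``opposite sides'': connectedness and doubling of $\Gamma$ do \emph{not} guarantee points $a,b\in\Gamma\cap\lambda B$ at distance $\approx\lambda r$ from $x_0$ with $d(a,b)\gtrsim\lambda r$; when $x_0$ is (near) an endpoint of the image, all of $\Gamma\cap\lambda B$ lies on one side and your Busemann-type $I$ collapses. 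The underlying reason is that $\alpha$ is a \emph{bilateral} quantity (it penalises $I$ for missing part of $(-r,r)$), whereas $\bi$ only sees triangle defects of triples in the image and is blind to whether the curve fills the ball from both sides.

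The paper's proof confronts exactly this gap by not attempting a pointwise bound in terms of $\bi$ alone. It fixes a parametrisation $\gamma:\bT\to\Gamma$ and, for each $B$, an arc $\tau$ through the center of $B$, then proves
\[
\alpha(B)\ \lesssim\ \bi(B)\ +\ \bti(\tau)^2,
\]
where $\bti(\tau)^2\diam(\tau)=\sup_{a<b<c}\del_1(\gamma(a),\gamma(b),\gamma(c))$ is the \emph{ordered} defect along the arc (Lemmas~\ref{lemma:betacompare1}--\ref{lemma:betacompare2}). In the endpoint example above the arc $\tau$ doubles back on itself, so $\del_1$ is large and $\bti(\tau)^2\gtrsim 1$ absorbs the large $\alpha(B)$. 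The first term is then summed via Theorem~\ref{thm:upperbound}, but the second term requires the separate telescoping machinery of Section~\ref{sec:nonflat} (Lemma~\ref{lemma:onefamily}), exactly as in the non-flat part of the proof of Theorem~\ref{thm:upperbound}. In short, the surjectivity half of $\alpha$ genuinely needs information from the parametrisation that $\bi$ does not carry, and that information must be summed by a different mechanism than Theorem~\ref{thm:upperbound}.
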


Corollary \ref{cor:GHTST} can be viewed as a bilateral analog to the upper bound in Jones' Analyst's traveling salesman theorem for arbitrary doubling metric curves, though with a non-sharp exponent.

Note that, while the $\frac{1}{10}$ in the definition of $\cB_0$ is somewhat arbitrary, some restriction in Corollary \ref{cor:GHTST} to ``small'' balls is necessary, since for any ball $B$ in $\cB$ with $\diam(B)\geq 2\diam(\Gamma)$, one has $\alpha(B)\gtrsim 1$.

\subsubsection{$\beta$-numbers for nets in $\ell_\infty$}\label{subsub:ellinfty}
We now turn our attention to the  Banach space $\ell_\infty$, the space of real sequences $(a^1, a^2, \dots)$, equipped with the norm
$$ \|(a^1, a^2, \dots)\| = \sup |a^i|.$$

One may measure the flatness of a subset $S\subseteq \ell_\infty$ in a variety of ways. A method with a clear geometric picture associated to it is to ask: how easy is it to approximate $S$, or a finite net in $S$, by a geodesic in $\ell_\infty$? (Note that the set of geodesics in $\ell_\infty$ a strictly larger class than the class of all \textit{lines} in $\ell_\infty$.) This yields a notion of $\beta$-number that we investigate. 

If $K$ is a set in $\ell_\infty$, $\{X_n\}$ is a family of $2^{-n}$ separated nets in $K$, and $B\in \cB$ is a ball at scale $2^{-n}$ in the associated multiresolution family (see Definition\ref{d:multiresolution}), we will write
\begin{equation}\label{eq:betaellinfty}
\beta^{K,\text{ net}}_{\ell_\infty}(B) \rad(B) = \inf_{L} \sup_{x\in X_{n+1}\cap B} \dist(x,L),
\end{equation}
where 
$$ \dist(x,L) = \inf\{d(x,y):y\in L\}$$
and the infimum is taken over all \textit{geodesics} $L$ in $\ell_\infty$.
Thus, $\beta^{K,\text{ net}}_{\ell_\infty}(B)$ measures how close a net in $B\cap K$ is to a geodesic in $\ell_\infty$. 
The following corollary is proven in Section \ref{sec:ellinfty}.

\begin{maincor}\label{cor:ellinfty}
Let $\Gamma$ be a doubling curve in $\ell_\infty$ with a multiresolution family $\cB$ of balls. Then
$$ \sum_{B\in\cB} \beta^{\Gamma,\text{ net}}_{\ell_\infty}(B)^p \diam(B) \lesssim \HH^1(\Gamma) $$
for all $p>1$.
The implied constant depends only on $p$ and the doubling constant of $\Gamma$.
\end{maincor}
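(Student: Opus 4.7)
The plan is to reduce Corollary \ref{cor:ellinfty} to Theorem \ref{thm:upperbound} via the pointwise comparison $\beta^{\Gamma,\text{ net}}_{\ell_\infty}(B) \lesssim \beta_\infty^\Gamma(AB)^2$ for a suitable enlargement factor $A$. Combined with Theorem \ref{thm:upperbound} applied at exponent $2p$, this gives the stated summability for $p > 1$.

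The key tool is the hyperconvexity (injectivity) of $\ell_\infty$, from which I would derive the following lemma: for any $1$-Lipschitz curve $\gamma\colon [0, \ell] \to \ell_\infty$ with $\gamma(0) = a$, $\gamma(\ell) = b$, there exists a geodesic $L\subseteq \ell_\infty$ from $a$ to $b$ with
\[ \sup_{t\in[0,\ell]} \dist(\gamma(t), L) \leq \tfrac{1}{2}\bigl(\ell - d(a,b)\bigr). \]
The proof uses the ball-intersection property of hyperconvex metric spaces: after reparametrizing $\tau = t\,d(a,b)/\ell$, one imposes for each $\tau$ the constraint $L(\tau) \in B(\gamma(\tau\ell/d(a,b)),\,\tfrac{1}{2}(\ell - d(a,b)))$ together with the endpoint and $1$-Lipschitz conditions. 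Pairwise compatibility of these constraints reduces to $\gamma$ being $1$-Lipschitz. Hyperconvexity then provides a consistent selection $L$, whose parameter length $d(a,b)$ matches its endpoint distance, making it a geodesic.

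To apply the lemma, fix $B\in\cB$ and a $1$-Lipschitz arclength parametrization $\gamma$ of the arc of $\Gamma$ containing the net $X_{n+1}\cap B = \{\gamma(t_1),\ldots,\gamma(t_m)\}$ (with $m\leq K$ by doubling). In the main case where $\gamma([t_1,t_m])\subseteq AB$, applying the lemma to $\gamma|_{[t_1,t_m]}$ yields
\[ \beta^{\Gamma,\text{ net}}_{\ell_\infty}(B)\rad(B) \leq \tfrac{1}{2}\bigl((t_m-t_1) - d(\gamma(t_1),\gamma(t_m))\bigr). \]
The arc--chord excess on the right is then bounded by $\lesssim_K \beta_\infty^\Gamma(AB)^2\rad(AB)$ by telescoping it into a sum of $O(K)$ triangle-inequality slacks for triples of $\Gamma\cap AB$, each at most the maximal triangle defect $\beta_\infty^\Gamma(AB)^2\rad(AB)$. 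Summing and invoking Theorem \ref{thm:upperbound} at exponent $2p > 2$ then produces the corollary for $p>1$. Balls where $\gamma^{-1}(B)$ has multiple components require a separate treatment using the trivial bound $\beta^{\Gamma,\text{ net}}_{\ell_\infty}(B)\lesssim 1$ together with a length-charging argument that leverages the fact that each additional strand forces $\HH^1(\Gamma\cap AB)\gtrsim\rad(B)$.

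The main obstacle will be making the telescoping bound on the arc--chord excess precise for general (not piecewise linear) curves: the elementary direction (a single triangle defect lower-bounds the excess) goes the opposite way, and the reverse inequality at a given scale requires combining the doubling of $\Gamma$ to cap the number of contributing slacks with a careful multiscale decomposition accounting for the arc-length deficit of $\gamma$ within individual subpieces. Handling multi-strand balls cleanly also requires care to avoid a logarithmic blow-up when summing across scales.
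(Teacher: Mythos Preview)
Your hyperconvexity lemma is a nice observation, and applied correctly it can replace the paper's explicit coordinate construction (Lemma~\ref{lem:ellinfty}). But the step where you bound the arc--chord excess of the actual sub-arc $\gamma|_{[t_1,t_m]}$ by $\bi^\Gamma(AB)^2\rad(AB)$ is a genuine gap, and the ``multiscale decomposition'' you propose to close it will not work.

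The quantity $(t_m-t_1)-d(\gamma(t_1),\gamma(t_m))$ is sensitive to wiggles of $\gamma$ at \emph{all} scales below $\rad(B)$, whereas $\bi^\Gamma(AB)$ sees only triples of points in $AB$. A curve that crosses $B$ once but oscillates many times at scales $\ll 2^{-(n+1)}$ has arc--chord excess comparable to its length in $B$, while $\bi^\Gamma(AB)$ and the positions of the net points are essentially unaffected. What telescoping \emph{does} control is $\sum_i d(\gamma(t_i),\gamma(t_{i+1})) - d(\gamma(t_1),\gamma(t_m))$, the excess of the piecewise-linear path through the net points; the difference between this and $(t_m-t_1)-d(\gamma(t_1),\gamma(t_m))$ is exactly the sum of sub-arc--chord excesses $\sum_i\bigl[(t_{i+1}-t_i)-d(\gamma(t_i),\gamma(t_{i+1}))\bigr]$, which is not bounded by anything at scale $B$. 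A multiscale decomposition of this leftover sums $\bi(B')^2\rad(B')$ over \emph{all} sub-balls $B'\subseteq B$, not just $B$ itself, so it cannot yield the pointwise comparison you need.

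The fix is to abandon the parametrization $\gamma$ entirely and work only with the finite net $N=X_{n+1}\cap B$. First one must \emph{order} $N$ so that for the resulting tuple $(x_1,\dots,x_m)$ the ordered defects $\del_1(x_i,x_j,x_k)$ coincide with the unordered $\del$; this is not automatic from the curve parametrization (the net points could lie on many arcs) and requires something like Hahlomaa's ordering lemma (Lemma~\ref{lem:Hahlomaa}, used in the paper as Lemma~\ref{lem:order}). Once $N$ is ordered, your hyperconvexity lemma applied to the piecewise-linear path $x_1\to x_2\to\cdots\to x_m$ gives a geodesic within distance $\tfrac12\bigl(\sum_i\|x_i-x_{i+1}\|-\|x_1-x_m\|\bigr)\leq \tfrac{m-1}{2}\bi^\Gamma(B)^2\rad(B)$ of each $x_i$, and $m$ is bounded by doubling. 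This is essentially the paper's route: it proves the pointwise bound $\beta^{\Gamma,\text{net}}_{\ell_\infty}(B)\lesssim\bi^\Gamma(B)^2$ directly on the net via ordering plus a geodesic construction (Lemma~\ref{lem:ellinfty}), then invokes Theorem~\ref{thm:upperbound}. Your hyperconvexity argument is a clean alternative to the coordinate manipulations in Lemma~\ref{lem:ellinfty}, but the ordering step cannot be skipped, and the parametrized arc $\gamma$ should not enter the argument at all.
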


\begin{remark}\label{rmk:ellinfty}
Because of the Kuratowski embedding theorem (see \cite[Section 12.3]{He01}), every separable metric space admits an isometric embedding into $\ell_\infty$. Thus, Corollary \ref{cor:ellinfty} allows one to show that, for a doubling curve $\Gamma$ in an arbitrary metric space, most balls $B$ in a multiresolution family on $\Gamma$ have the property that a net in $B$ is close to lying on an abstract geodesic.

Another way to view Corollary \ref{cor:ellinfty} is as follows: Suppose one has a separable metric space $X$ with a natural class of geodesics, with respect to which one defines a $\beta$ number or $\beta^{\text{net}}$ number and proves an upper bound on the summability of these quantities for curves.
As we have seen, $X$ may be a Euclidean space, a Banach space, or the Heisenberg group, in which case the appropriate summability power $p$ is larger than $1$ and depends on the geometry of the space. 
One may instead consider isometrically embedding the space $X$ in $\ell_\infty$ (using the Kuratowski embedding theorem) and using the richer class of $\ell_\infty$ geodesics to approximate the net points on a given curve in $X$. Corollary \ref{cor:ellinfty} shows that one may achieve better summability (any power greater than $1$) for this quantity. 
In the case of the Heisenberg group, this idea is explored further in Corollary \ref{cor:Hcurves} and Section \ref{sec:Heisenberg}.
\end{remark}

\subsubsection{$\beta$-numbers in uniformly convex Banach spaces.}\label{subsub:banach}
We now turn our attention to a class of Banach spaces that excludes $\ell_\infty$, the \textit{uniformly convex} Banach spaces, which we define below. In these spaces, as in Hilbert spaces, it is natural to simply measure flatness by distance to lines.

If $X$ is a Banach space, $E\subset X$, and $B=B(z,r)$ a ball in $X$, set
\begin{equation}\label{e:betaX}
 \beta_X^E(B)r = \inf_L \sup\{\dist(x,L) : x\in E\cap B\},
\end{equation}
where 
$$ \dist(x,L) = \inf\{d(x,y):y\in L\}$$
and the infimum is taken over all \textit{lines} $L$ in $X$. Note that while in Section \ref{subsub:ellinfty} we allowed all geodesics in our $\beta$-number, here we allow only lines. However, our focus in this section is on Banach spaces in which these notions agree.

We will use the notion of modulus of convexity of a Banach space to connect $\bx$ and $\bi$.
\begin{definition}
The \textit{modulus of convexity} of a Banach space $(X,\|\cdot\|)$ is the function $\delta$ defined as follows:
\begin{equation}\label{eq:convexity}
 \delta(\epsilon) = \inf\left\{ 1 - \left\| \frac{x+y}{2}\right\| : \|x\|=\|y\|=1 \text{ and } \|x-y\|\geq \epsilon\right\}
\end{equation}
for $\epsilon \in [0,2]$.
\end{definition}
For more background and information on this concept, we refer the reader to \cite[Section 1.e]{LT79}. It will be convenient to note that this definition is unchanged if in the set on the right hand side one allows $\|x\|\leq 1$ and $\|y\|\leq 1$. (See \cite[Section 1.e]{LT79}.)

The Banach space $X$ is \textit{uniformly convex} if and only if $\delta(\epsilon)>0$ for all $\epsilon>0$. In this case, a theorem of Pisier \cite{Pi75} states that there is an equivalent norm on $X$ such that 
\begin{equation}\label{eq:powerconvex}
\delta(\epsilon)\geq c\epsilon^q
\end{equation}
for some $c>0$ and $q\geq 2$.

For $p\in (1,\infty)$, the standard $L_p$ spaces each have modulus of convexity satisfying \eqref{eq:powerconvex} with some constant $c=c_p>0$ and exponent
\[ q =  \begin{cases} 
      2 & \text{ if } 1<p\leq 2, \\
      p & \text{ if } p>2. 
   \end{cases}
\]

The following corollary is proven in Section \ref{sec:Banach}.

\begin{maincor}\label{cor:Banach}
Let $(X,\|\cdot\|)$ be a Banach space with modulus of convexity $\delta$ satisfying \eqref{eq:powerconvex} with $c>0$ and $q\geq 2$.  Let $p>q$. 
Then for any doubling curve $\Gamma\subset X$ with a multiresolution family $\cB=\cB^{\Gamma}$ of balls, we have
$$ \sum_{B\in \cB} \beta_X^{\Gamma}(B)^{p}\diam(B) \lesssim \HH^1(\Gamma).$$
The implied constant depends only on $p$, $c$, $q$, the inflation factor of $\cB$, and the doubling constant of $\Gamma$.
\end{maincor}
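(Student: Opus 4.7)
The plan is to reduce Corollary \ref{cor:Banach} to Theorem \ref{thm:upperbound} by establishing the pointwise comparison
\[
\beta_X^\Gamma(B) \;\le\; C(c,q)\,\beta_\infty^\Gamma(B)^{2/q}
\qquad\text{for every } B\in\cB.
\]
Once this is in hand, the assumption $p>q$ forces $2p/q>2$, so Theorem \ref{thm:upperbound} applied with exponent $2p/q$ immediately yields
\[
\sum_{B\in\cB}\beta_X^\Gamma(B)^p\diam(B)\;\lesssim\;\sum_{B\in\cB}\beta_\infty^\Gamma(B)^{2p/q}\diam(B)\;\lesssim\;\HH^1(\Gamma).
\]

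To prove the pointwise inequality on a ball $B=B(z_0,r)$, I would take $x_0,z_1\in\Gamma\cap B$ which nearly realize $\diam(\Gamma\cap B)$ and use the affine line $L$ through $x_0$ and $z_1$ as the competitor in the infimum defining $\beta_X^\Gamma(B)$. For an arbitrary $y\in\Gamma\cap B$, relabel so that $r_1:=d(y,x_0)\le r_2:=d(y,z_1)$; the near-extremal choice of $x_0,z_1$ forces $r_2\le d(x_0,z_1)$, and hence $\partial(x_0,y,z_1)=r_1+r_2-d(x_0,z_1)=:\eta$. Set $a=(y-x_0)/r_1$ and $b=(z_1-y)/r_2$; these are unit vectors with $r_1a+r_2b = z_1-x_0$ of norm $r_1+r_2-\eta$. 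The algebraic rearrangement $r_1a+r_2b = r_1(a+b)+(r_2-r_1)b$ combined with the triangle inequality produces $\|(a+b)/2\|\ge 1-\eta/(2r_1)$, and then the hypothesis $\delta(\varepsilon)\ge c\varepsilon^q$ forces $\|a-b\|\lesssim(\eta/r_1)^{1/q}$. A direct computation shows that the point $x_0+\frac{r_1}{r_1+r_2}(z_1-x_0)\in L$ differs from $y=x_0+r_1a$ by $\frac{r_1r_2}{r_1+r_2}(b-a)$, so $\dist(y,L)\le r_1\|a-b\|\lesssim r_1^{1-1/q}\eta^{1/q}$. Using $r_1\le 2r$ and $\eta\le\beta_\infty^\Gamma(B)^2\cdot r$ then gives $\dist(y,L)/r\lesssim\beta_\infty^\Gamma(B)^{2/q}$, and taking the supremum over $y$ completes the comparison.

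The chief obstacle is applying uniform convexity to the \emph{asymmetric} convex combination $(r_1a+r_2b)/(r_1+r_2)$ that arises naturally from $z_1-x_0$, since the modulus $\delta$ is defined only for the balanced average $(a+b)/2$. The rearrangement above trades this asymmetry for a controlled error term depending on $r_2-r_1$. A secondary technicality is that $\diam(\Gamma\cap B)$ need not be attained; this is handled routinely by passing to a near-extremal pair and absorbing the loss into constants. Degenerate cases (e.g., $y\in\{x_0,z_1\}$) are trivial.
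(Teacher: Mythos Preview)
Your overall strategy---prove the pointwise comparison $\beta_X^\Gamma(B)\lesssim\beta_\infty^\Gamma(B)^{2/q}$ and then invoke Theorem~\ref{thm:upperbound} with exponent $2p/q>2$---is exactly the paper's strategy, and your argument for the comparison is correct. The algebraic manipulation $r_1a+r_2b=r_1(a+b)+(r_2-r_1)b$ cleanly converts the asymmetric combination into a symmetric one plus a controlled error, and the rest follows as you say; the near-extremality issue is indeed routine, since replacing $\eta$ by $\eta+2\epsilon$ and letting $\epsilon\to0$ (for each fixed $y$, then taking the supremum) costs nothing.

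Where you differ from the paper is in the proof of the pointwise comparison itself. The paper first invokes Hahlomaa's ordering lemma (Lemma~\ref{lem:Hahlomaa}) to put an order on a $2\beta_\infty(B)\diam(B)$-net $\{x_1,\dots,x_n\}$ in $B\cap\Gamma$, takes $L$ to be the line through $x_1$ and $x_n$, and then bounds $\dist(x_i,L)$ via a separate convexity lemma (Lemma~\ref{lem:lineconvexity}) that chooses an auxiliary point $y_0\in[x,z]$ and applies the modulus of convexity \emph{twice}, once in $\overline{B}(x,\|x-y\|)$ and once in $\overline{B}(z,\|z-y\|)$; arbitrary points of $B$ are then handled by proximity to the net. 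Your approach bypasses the net and the ordering machinery entirely: by anchoring $L$ at two points that (nearly) realize $\diam(\Gamma\cap B)$, you automatically force $d(x_0,z_1)$ to be the longest side of every triangle $(x_0,y,z_1)$, so $\partial=\partial_1$ without appeal to Hahlomaa. Your convexity step is also different---a single application of the modulus to the unit vectors $a,b$ rather than two applications in rescaled balls. The upshot is that your route is shorter and more self-contained; the paper's route, on the other hand, reuses infrastructure (the ordered-net lemma) already developed for Corollaries~\ref{cor:GHTST} and~\ref{cor:ellinfty}, so in context it costs little extra.
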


\begin{remark}
In \cite{ENV18}, Edelen-Naber-Valtorta study the converse problem of finding a curve (or $k$-rectifiable set more generally) that captures a large portion of a given set or measure $\mu$ in a Banach space $X$. (This can be seen as an analog of Theorem \ref{t:TST}\eqref{e:betaE}, where Corollary \ref{cor:Banach} corresponds to Theorem \ref{t:TST}\eqref{e:beta_gamma}.)

The notion of $\beta$-number studied there is an average (rather than supremum) associated to a measure $\mu$ on $X$:
\begin{equation}\label{e:betaENV}
\beta_\mu^1(B(x,r))^2 = \inf_{\text{lines } L} \int_{B(x,r)} \dist(\cdot ,L)^2 \,d\mu.
\end{equation}

Edelen-Naber-Valtorta show that for constructing  curves (the case $k=1$ in \cite{ENV18}) in a Banach space $X$, the relevant property of $X$ is not uniform convexity but uniform smoothness. Without giving the precise definition, we recall that a Banach space is \textit{uniformly smooth} if and only if its dual is uniformly convex \cite{LT79}[Section 1.e]. In that case, as for convexity, there is an associated exponent of smoothness $\alpha$, which is $p$ for $L_p$ if $1<p\leq 2$ and $2$ for $L_p$ if $2<p<\infty$.

In Theorem 2.6 of \cite{ENV18}, the authors prove that, given a measure $\mu$ with positive and finite upper and lower $1$-dimensional densities on a Banach space $X$ with smoothness exponent $\alpha$, integral control on the quantity $\beta^1_\mu(B(x,r))^\alpha$ implies $1$-rectifiability of $\mu$. We refer the reader to \cite{ENV18} for the details and other related results.
\end{remark}

\subsubsection{A comparison with the Heisenberg group}\label{subsub:heis}
We present one more corollary of our work by observing a consequence of Corollary \ref{cor:ellinfty} for curves in the Heisenberg group $\bH$, equipped with its sub-Riemannian Carnot-Carath\'eodory metric $d$. (For a brief introduction to the Heisenberg group and this metric, we refer the reader to \cite{LS14, LS15} and the references in those papers.)

In \cite{FFP07, LS14, LS15}, a notion of $\beta$-number was defined for subsets $K$ of the Heisenberg group as follows. If $B$ is a ball in $\bH$, write
\begin{equation}\label{e:betaH}
 \beta^K_{\bH}(B) \rad(B) = \inf_{L} \sup_{x\in K\cap B} \dist(x,L),
\end{equation}
where
$$\dist(x,L) = \inf\{d(x,y):y\in L\}$$
and the infimum is taken over all so-called \textit{horizontal lines} $L$ in $\bH$. Sub-segments of horizontal lines form a proper sub-class of all geodesics in the Heisenberg group.

By the Kuratowski embedding theorem, we may fix an isometric embedding $\iota\colon \bH \rightarrow \ell_\infty$ and thus view $\bH$ as a subset of $\ell_\infty$. (A different choice of embedding $\iota$ will not affect the results described here, so we suppress it from the notation.)

Given $\iota$, we may view a curve $\Gamma$ in $\bH$ as also being a subset of $\ell_\infty$. Thus, we may compare $\beta_{\bH}$ to the notion of $\beta^{\Gamma,\text{ net}}_{\ell_\infty}$ introduced earlier. Since horizontal lines in $\bH$ are geodesics, we have $\beta^{K}_{\bH}(B) \geq \beta^{K,\text{ net}}_{\ell_\infty}(B).$

Improving earlier work by Ferrari-Franchi-Pajot \cite{FFP07}, Li and Schul \cite{LS15} proved the following analog of \eqref{e:betaE} in Theorem \ref{t:TST}:
\begin{theorem}[\cite{LS15}, Theorem A]\label{thm:LS}
Let $r<4$ be fixed. There is a constant $C_r>0$ such that, for any $K\subseteq \bH$ and multi-resolution family of balls $\cB$ in $K$, if
$$ \diam(K) + \sum_{B\in\cB} \beta^{K}_{\bH}(B)^r\diam(B) < \infty,$$
then there is a rectifiable curve $\Gamma \subseteq E$ such that 
$$ \HH^1(\Gamma) \leq C_r\left( \diam(K) + \sum_{B\in\cB} \beta^{K}_{\bH}(B)^r\diam(B)  \right).$$
\end{theorem}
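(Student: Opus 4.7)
The plan is to adapt the multiscale curve-construction used by Jones in $\bR^2$ and by Schul in Hilbert space to the sub-Riemannian setting of $\bH$. First I would fix the multiresolution family $\cB$ and, for each $B \in \cB$ with $K \cap B$ of diameter comparable to that of $B$, select a horizontal line $L_B$ realizing up to a constant the infimum in the definition of $\beta_{\bH}^K(B)$. The target curve $\Gamma$ is obtained as a Hausdorff limit of an increasing sequence of connected, rectifiable sets $\Gamma_0 \subseteq \Gamma_1 \subseteq \cdots$, where $\Gamma_n$ is built from sub-arcs of the chosen horizontal lines $L_B$ at scales coarser than $2^{-n}$, joined by short ``bridge'' horizontal paths that ensure connectedness. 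The length bound would then come from telescoping: estimate $\HH^1(\Gamma_n) - \HH^1(\Gamma_{n-1})$ by a sum over balls of scale roughly $2^{-n}$, with each contribution involving the corresponding $\beta$-number, and use Golab semicontinuity to conclude $\HH^1(\Gamma) \leq \liminf_n \HH^1(\Gamma_n)$.

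The heart of the argument, and the source of the exponent $r<4$ rather than the $r=2$ one sees in Euclidean settings, is the estimate for the cost of a single bridge in $\bH$. Concretely, if two adjacent approximating horizontal segments $L_B$ and $L_{B'}$ at scale $2^{-n}$ each lie within $\beta\cdot 2^{-n}$ of $K$, with $\beta$ the larger of the two $\beta$-numbers, then the endpoints of interest are close in the horizontal directions but can generically be offset in the vertical direction by up to $\beta^2\cdot 2^{-n}$; such a vertical displacement cannot be realized by any horizontal segment, so the CC-length of the bridge needed to connect them is of order $\beta\cdot 2^{-n}$ rather than $\beta^2\cdot 2^{-n}$. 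When this single-power-of-$\beta$ loss is fed into a Carleson-type packing estimate across all scales, it is exactly what forces any exponent strictly less than $4$ to close up; the endpoint $r=4$ is lost, as is standard in this circle of results, to absorb logarithmic factors in the packing bookkeeping.

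To carry the argument out rigorously, I would first install a Christ-type cube decomposition of $K$ compatible with $\cB$, then run an $L^\infty$ stopping-time construction in the spirit of Schul to group the cubes into coherent towers on which the optimal horizontal lines vary slowly, and refine the polygonal curve $\Gamma_n$ only inside towers where a nontrivial improvement in approximation is required. The main obstacle will be the bridge estimate sketched above: it demands a quantitative sublemma of the form ``two horizontal lines that are each $\beta r$-close to a common subset of a ball of radius $r$ have their relevant endpoints differing by a vertical amount at most $\beta^2 r$,'' which is precisely where the quadratic non-commutativity of $\bH$ enters. Beyond that lemma, everything reduces to careful bookkeeping ensuring the telescoping sum converges with a constant depending only on $r$ and the inflation factor of $\cB$.
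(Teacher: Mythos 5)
First, a point of order: the paper does not prove this statement at all --- Theorem \ref{thm:LS} is quoted verbatim as Theorem A of \cite{LS15}, a separate (and long) paper, so there is no internal proof to compare yours against. Evaluating your sketch on its own terms, the central quantitative claim is arithmetically inconsistent with the theorem you are trying to prove. You assert that each bridge costs a ``single power of $\beta$,'' i.e.\ length of order $\beta(B)\diam(B)$ per ball, and that ``this single-power-of-$\beta$ loss \ldots is exactly what forces any exponent strictly less than $4$ to close up.'' But a per-ball cost of $\beta(B)\diam(B)$, summed over the multiresolution family, is $\sum_B \beta(B)\diam(B)$, and since $\beta(B)\lesssim 1$ we have $\beta(B)\geq c\,\beta(B)^r$ for every $r>1$; finiteness of $\sum\beta^r\diam(B)$ for $r$ near $4$ therefore gives \emph{no} control on $\sum\beta\diam(B)$. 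Your accounting would at best prove the theorem under the much stronger hypothesis $r\leq 1$. The logic of the exponent runs the other way: the larger $r$ is, the weaker the hypothesis and the harder the theorem, so to reach $r=4-\epsilon$ the construction cost attributable to a ball $B$ must be bounded by $\beta(B)^{4-\epsilon}\diam(B)$ --- a \emph{fourth}-power gain, not a first-power loss.

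The source of the error is that you conflate the length of an individual bridge with the \emph{net} excess length, which is the quantity the telescoping actually measures. Even in Jones' Euclidean construction the bridges between consecutive approximating segments have length of order $\beta\diam(Q)$; the point is that $\HH^1(\Gamma_{n+1})-\HH^1(\Gamma_n)$ is controlled by the triangle-inequality deficits $\del_1(x,y,z)$ of inserted points, because bridge lengths are offset by the shortening of adjacent segments, and Pythagoras makes the net excess $\sim\beta^2\diam$. In $\bH$ the analogous computation for a point at CC-distance $h$ from the chord $[x,z]$ with $d(x,z)\sim d$ gives a deficit as small as $h^4/d^3$ when the deviation is in the vertical (center) direction --- using $d_{CC}(0,(a,0,t))\approx (a^4+t^2)^{1/4}$ --- versus $h^2/d$ for deviations in the horizontal plane. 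This quartic degeneracy is what ties the exponent to $4$ (it is exactly why the necessary condition of \cite{LS14} holds only with $\beta^4$), and the genuinely hard part of \cite{LS15} is showing that the horizontal-deviation contributions, which naively cost $\beta^2\diam(B)$ per ball and would cap the exponent at $2$, can be redistributed across scales so that the total is still dominated by $\sum\beta^{4-\epsilon}\diam(B)$. Your sketch neither identifies this as the obstruction nor supplies the ordering/consistency machinery (the analogue of Schul's coherent orderings of nets) needed to make the telescoping legitimate, so as written it is an outline of the Euclidean proof with an incorrect guess attached for where the Heisenberg exponent comes from.
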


If one starts with a curve $\Gamma$ whose length is a definite factor larger than its diameter (equation \eqref{eq:longHcurve} below), and one considers the measure $\cM_r$ on the multiresolution family $\cB$ defined by 
$$ \cM_r(\cB') = \sum_{B\in\cB'} \beta^{K}_{\bH}(B)^r\diam(B) $$
for $\cB'\subseteq \cB$, then Theorem \ref{thm:LS} implies that the mass of the whole collection $\cB$ is bounded below by a multiple of the length of $\Gamma$:
$$ \cM_r(\cB) \gtrsim_r \HH^1(\Gamma).$$

The gap between the exponents $p=1+\epsilon$ in Corollary \ref{cor:ellinfty} and $r=4-\epsilon$ in Theorem \ref{thm:LS} then leads to the following observation, which says that for $q<r<4$ and ``most'' balls in $\cB$ (measured with respect to $\cM_r$), we have that 
$$\beta^{\Gamma,\text{ net}}_{\ell_\infty}(B) \lesssim \beta^{\Gamma}_{\bH}(B)^{q}.$$

\begin{maincor}\label{cor:Hcurves}
Let $\Gamma$ be a curve in the Heisenberg group whose length is sufficiently large compared with its diameter (see  \eqref{eq:longHcurve}), with multiresolution family $\cB$ having inflation factor $A=10$. Let  $q<4$,  $r\in (q, 4)$, and $\delta>0$. There is a constant $c=c_{q,r,\delta}>0$ such that if
\begin{equation}
\cB_{c,q} = \{ B\in\cB: \beta^{\Gamma,\text{ net}}_{\ell_\infty}(B) > c\beta^{\Gamma}_{\bH}(B)^q \}
\end{equation}
then
$$ \cM_r(\cB_{c,q}) \leq \delta \HH^1(\Gamma) \leq 2\delta C_r \cM_r(\cB).$$
\end{maincor}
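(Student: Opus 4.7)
The plan is to combine the sharp-exponent upper bound from Corollary \ref{cor:ellinfty} (applied to $\Gamma$ viewed as a subset of $\ell_\infty$ via $\iota$) with the lower bound on $\cM_r(\cB)$ that comes from Theorem \ref{thm:LS} under the length-vs-diameter hypothesis \eqref{eq:longHcurve}. The key algebraic observation is that on $\cB_{c,q}$ the defining inequality $\beta^{\Gamma,\text{ net}}_{\ell_\infty}(B) > c\,\beta^{\Gamma}_{\bH}(B)^q$ lets us trade a power-$r$ bound on the Heisenberg $\beta$-number for a power-$r/q$ bound on the $\ell_\infty$ net $\beta$-number, and since $r>q$ we have $r/q>1$, which is exactly the range in which Corollary \ref{cor:ellinfty} applies.

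First I would handle the right-hand inequality, $\HH^1(\Gamma) \leq 2C_r \cM_r(\cB)$. Apply Theorem \ref{thm:LS} with $K = \Gamma$: since $\Gamma$ is already a curve, the theorem furnishes a rectifiable curve $\Gamma' \supseteq \Gamma$ with
\[
\HH^1(\Gamma) \leq \HH^1(\Gamma') \leq C_r\bigl(\diam(\Gamma) + \cM_r(\cB)\bigr).
\]
Taking \eqref{eq:longHcurve} to be the condition $\HH^1(\Gamma) \geq 2C_r\diam(\Gamma)$, one absorbs the diameter term into the left-hand side and concludes $\HH^1(\Gamma) \leq 2C_r\,\cM_r(\cB)$, as required.

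For the left-hand inequality, on each $B\in\cB_{c,q}$ the defining inequality rearranges to
\[
\beta^{\Gamma}_{\bH}(B)^r \leq c^{-r/q}\,\beta^{\Gamma,\text{ net}}_{\ell_\infty}(B)^{r/q}.
\]
Summing against $\diam(B)$ over $\cB_{c,q}$ and then extending the sum to all of $\cB$ gives
\[
\cM_r(\cB_{c,q}) \leq c^{-r/q}\sum_{B\in\cB} \beta^{\Gamma,\text{ net}}_{\ell_\infty}(B)^{r/q}\diam(B).
\]
Viewing $\Gamma$ as a subset of $\ell_\infty$ through the fixed embedding $\iota$, the doubling of $\Gamma$ is preserved (it is an isometric invariant), so Corollary \ref{cor:ellinfty} applies with exponent $p = r/q > 1$, giving an upper bound of $C'\,\HH^1(\Gamma)$ on the sum, where $C'$ depends only on $r/q$ and the doubling constant of $\Gamma$. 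Choosing $c = c_{q,r,\delta}$ so that $c^{-r/q} C' \leq \delta$ finishes this inequality.

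I do not expect a serious obstacle: the only delicate input is confirming that Corollary \ref{cor:ellinfty} may indeed be applied with exponent $r/q>1$ to $\iota(\Gamma)\subset\ell_\infty$ with an appropriate multiresolution family (the natural choice is $\iota(\cB)$, which inherits $A=10$ and doubling from $\Gamma$), and pinning down \eqref{eq:longHcurve} precisely so that the Theorem \ref{thm:LS} step yields the factor $2C_r$ appearing in the corollary. Everything else is arithmetic: the proof reduces to a one-line manipulation on $\cB_{c,q}$ plus two applications of previously established theorems.
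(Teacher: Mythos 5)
Your proposal is correct and follows essentially the same route as the paper: the right-hand inequality comes from Theorem \ref{thm:LS} plus the length hypothesis \eqref{eq:longHcurve} (absorbing the $\diam(\Gamma)$ term), and the left-hand inequality comes from raising the defining inequality of $\cB_{c,q}$ to the power $r/q$ and invoking Corollary \ref{cor:ellinfty} with $p=r/q>1$, then choosing $c$ large. The paper's proof is exactly this computation, so there is nothing further to add.
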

Corollary \ref{cor:Hcurves} indicates that the complement of $\cB_{c,q}$ is a large set in the sense of $\cM_r$. For balls in this complement, $\beta^{\Gamma,\text{ net}}_{\ell_\infty}(B)$ is either itself close to one, or much smaller than $\beta^{\Gamma}_{\bH}(B)$.
We note that 
Theorem I in \cite{LS14} shows that $\cM_4(\cB)\lesssim \HH^1(\Gamma)$.

In Section \ref{sec:Heisenberg}, we present the short proof of Corollary \ref{cor:Hcurves}, and describe how, although one can prove an analogous result in Euclidean spaces, the situations do indicate a genuine difference between the Heisenberg group and Euclidean space.

\begin{remark}
In the setting of the Heisenberg groups   there is also a connection of $\beta$ with singular integrals. An example of a recent result on this is \cite{Vasilis-Sean-2017}. See also  \cite{Vasilis-Sean-Scott} for more general Carnot groups.
\end{remark}

\subsection{Structure of the paper}

In Section \ref{sec:notation}, we establish the basic notations and definitions used in the paper. The proof of Theorem \ref{thm:upperbound} occupies Sections \ref{sec:cubesfiltrations}, \ref{sec:nonflat}, and \ref{sec:flatarcsballs}. 

More specifically, in Section 3, we divide the curve $\Gamma$ into a family of ``cubes'' and consider collections of cubes with $\bi \approx 2^{-M/2}$ for each $M\in\mathbb{N}$. For each $M$, we associate to this collection of cubes a finite number of filtrations of $\Gamma$ into arcs. We then use these filtrations to separate each ball $B\in\cB$ into one of two categories (flat or non-flat), by comparing the flatness of arcs contained in $B$ to $\bi(B)^2\diam(B)$.
More specifically, in Section 3, we divide the curve $\Gamma$ into a family of ``cubes'' and consider collections of cubes with $\bi \approx 2^{-M/2}$ for each $M\in\mathbb{N}$. For each $M$, we associate to this collection of cubes a finite number of filtrations of $\Gamma$ into arcs. We then use these filtrations to separate each ball $B\in\cB$ into one of two categories (flat or non-flat), by comparing the flatness of arcs contained in $B$ to $\bi(B)^2\diam(B)$.

The sum of $\bi(B)^p\diam(B)$ over non-flat balls is then controlled in Section \ref{sec:nonflat} by reducing to a sum over filtrations. The sum over flat balls is controlled in Section \ref{sec:flatarcsballs} using a martingale argument, a modification of the one which appears in \cite{Sc07_hilbert, Sc07_metric}.

Corollaries \ref{cor:GHTST}, \ref{cor:ellinfty}, \ref{cor:Banach}, and \ref{cor:Hcurves} are then proven in Sections \ref{sec:GHTST}, \ref{sec:ellinfty}, \ref{sec:Banach}, and \ref{sec:Heisenberg}, respectively.

\subsection{A table of $\beta$-numbers}
Throughout the paper we define or reference various different notions of flatness for a set. As these may be confusing to keep track of, we provide the following table of reference:

\begin{spacing}{2}
\begin{center}
\begin{tabular}{|c|c|c|}
\hline
Notation															&     Description                  &     Location of definition                  \\
\hline
\hline
$\beta_{\infty}^{\rm Euc}$						&  Distance to lines in $\bR^n$															& \eqref{e:euclidean-beta}\\
\hline
$\del_1$															&  Ordered triangle inequality deficit											& \eqref{e:del1}\\
\hline
$\del$																&  Unordered triangle inequality deficit										& \eqref{e:del}\\
\hline
$\bi$																	&  Supremum of $\del$																				& \eqref{e:bi}\\
\hline
$\alpha$															&  ``Gromov-Hausdorff'' bilateral distance to line segment	& \eqref{e:alpha}\\
\hline
$\beta^{\text{net}}_{\ell_\infty}$		&  Distance of net to geodesics in $\ell_\infty$						& \eqref{eq:betaellinfty}\\
\hline
$\beta_X$															&  Distance to lines in Banach space $X$										& \eqref{e:betaX}\\
\hline
$\beta_\mu^1$													&  Average distance of a Banach space measure to lines			& \eqref{e:betaENV}\\
\hline
$\bti$																&  Supremum of $\del_1$ along an arc												& \eqref{e:bti}\\
\hline                   
\end{tabular}
\end{center}
\end{spacing}

\subsubsection*{Acknowledgments}
The authors would like to thank Jonas Azzam for helpful discussions.
G.~ C.~ David was partially supported by the National Science Foundation under Grant No. DMS-1758709. R.~ Schul was partially supported by the National Science Foundation under Grants No. DMS-1763973 and DMS-1361473

\section{Notation and definitions}\label{sec:notation}
\subsection{Balls, nets, and multiresolution families}
For a metric space $M$, we denote balls in $X$ by 
$$ B(x,r) = \{y\in X: d(x,y)\leq r\}.$$
A ball is considered to be equipped with a center and radius (which may not be uniquely defined by the ball seen only as a set). The radius of $B$ will sometimes be denoted $\rad(B)$.

If $B=B(x,r)$ and $\lambda>0$, then we write
$$ \lambda B = B(x,\lambda r).$$

If $E\subseteq M$, we say that $X\subset E$ is an $\epsilon$-net (or $\epsilon$-separated net) for $E$ if
\begin{enumerate}[(i)]
\item for all $x_1,x_2\in X$ we have $\dist(x_1,x_2) > \epsilon$
\item for all $y\in E$ there exists $x\in X$ such that $\dist(x,y)\leq \epsilon$
\end{enumerate}
Hence $E\subset \bigcup_{x\in X}B(x,\epsilon)$, given an $\epsilon$-net $X$ for $E$.  

Fix a set $E$. Denote by $X^E_n$ a  sequence of nested $2^{-n}$-nets for $E$. In other words, $X^E_n$ are $2^{-n}$-nets for $E$ such that $X^E_{n+1}\subseteq X^E_n$ for each $n\in\bZ$.
\begin{definition}\label{d:multiresolution}
 A multiresolution $\cB$ for a set $E$ (denoted by $\cG^E$ when not clear from context) is defined by
\begin{gather}\label{29_11_05}
\cB^E=\{B(x,A2^{-n}):x\in X^E_n, n \text{ an integer}\}
\end{gather}
for a constant $A>1$.  
The constant $A$ may then be referred to as the \emph{inflation factor of $\cB$}.
\end{definition}

\begin{remark}
Throughout the proof of Theorem \ref{thm:upperbound} in sections \ref{sec:cubesfiltrations} through \ref{sec:flatarcsballs}, most statements will involve constants that depend on the inflation factor $A$ in the given multiresolution family $\cB$ of the given curve $\Gamma$. To avoid repetition, we will not remark on this dependence each time, though of course it is noted in the statement of Theorem \ref{thm:upperbound}.
\end{remark}

\subsection{Curves and sub-arcs}\label{subsec:curve}
Fix a compact, connected set $\Gamma$ in a metric space with a doubling metric $d$. Without loss of generality, when proving Theorem \ref{thm:upperbound}, we may assume that 
$\HH^1(\Gamma)\leq 1$.

In that case, there is a $2$-Lipschitz (not necessarily injective) parametrization $\gamma:\bT\rightarrow\Gamma$, where $\bT=\bR/ \bZ$, i.e., $[0,1]$ with $0$ and $1$ identified. (In \cite{AO17}, the statement with domain $[0,1]$  is attributed to Wa\.zewski \cite{Waz}. A proof with domain $\bT$ can be found in \cite[Proposition 5.1]{RR18} or, with a worse Lipschitz constant, in \cite[Lemma 4.2]{Sc07_metric}.)

By scaling the metric on $\Gamma$, we may assume that $\gamma$ is an arc-length (in particular, $1$-Lipschitz) parametrization. Note that this implies that $\diam(\Gamma)\leq 1$.

An \textit{arc} in $\Gamma$ is the restriction $\gamma|_I$ of $\gamma$ to a compact, connected subset $I\subseteq \bT$. We denote by $\length(\tau)$ the arc-length of $\tau$ (which is simply the length of $I$ as $\gamma$ is an arc-length parametrization) and by $\diam(\tau)$ the diameter of the \textbf{image} of $\tau$, i.e., $\diam(\gamma(I))$.

\subsection{$\beta$-numbers}\label{sec:beta}
Let $M$ be a metric space.

As in \cite{Sc07_metric}, for an ordered triple $(x_1,x_2,x_3)\in M^3$ we define
\begin{equation}\label{e:del1}
\del_1(x_1,x_2,x_3):=\dist(x_1,x_2)+\dist(x_2,x_3)-\dist(x_1,x_3).
\end{equation}
We also define an unordered version of this quantity. Let $\{x_1,x_2,x_3\}\subset M$ be  an unordered triple, and set  
\begin{equation}\label{e:del}
\del(x_1,x_2,x_3)=\min\limits_{\sigma\in S_3}\del_1(x_{\sigma(1)},x_{\sigma(2)},x_{\sigma(3)})\,,
\end{equation}
where $S_3$ is the permutation group on $\{1,2,3\}$. Equivalently,
\begin{gather*}
\del(x_1,x_2,x_3):=\del_1(x_1,x_2,x_3),
\end{gather*}
whenever $\dist(x_1,x_2)\leq\dist(x_2,x_3)\leq\dist(x_1,x_3)$.
We have for all $\{x,y,z\}\subset M$
\begin{gather*}
\del(x,y,z)\leq \diam\{x,y,z\},
\end{gather*}
as well as
\begin{gather*}
0\leq\del(x,y,z)\leq\del_1(x,y,z)\leq 2\diam\{x,y,z\},
\end{gather*}
where non-negativity follows from the triangle inequality.

Let $E$ be a metric space.
Let $B$ be a ball of radius $r$.
We define
\begin{equation}\label{e:bi}
\beta_\infty^E(B)^2 = r^{-1}\sup\{ \del(x,y,z) : x,y,z\in E \cap B\}
\end{equation}
If the  $E$ is understood, we suppress it from the notation and write simply $\bi(B)$.
See the introduction, \cite{Hahlomaa-non-AR}, and \cite{Sc07_survey} for further background on this definition and how it relates to Jones's classical definition in \cite{Jones-TSP}.

We also define an ordered version of $\bi$ for arcs in the parametrization. For an interval $I$ and arc $\tau=\gamma|_I\subseteq \Gamma$, let
\begin{equation}\label{e:bti}
\bti(\tau)^2\diam(\tau) = \sup\{ \del_1(\gamma(a),\gamma(b),\gamma(c)) : a<b<c\in I\}.
\end{equation}

\section{Cubes, filtrations, and flat versus non-flat balls}\label{sec:cubesfiltrations}
We now begin proving Theorem \ref{thm:upperbound} in earnest. 

Let $\Gamma$ be a doubling metric curve and let $\cB$ be a multiresolution family for $\Gamma$, as defined above. As remarked in subsection \ref{subsec:curve}, we without loss of generality equip $\Gamma$ with a $1$-Lipschitz parametrization $\gamma:\bT\rightarrow \Gamma$.

Fix a small absolute constant $\eb>0$ and a large constant $K\in\mathbb{N}$ such that $2^{-K}\leq \eb^2/100$. These will be defined to be sufficiently small in the course of the proof of Theorem \ref{thm:upperbound}. 

\subsection{Cubes}\label{subsec:cubes}
We first split our multiresolution family into a fixed number of disjoint subcollections, using the following lemma from \cite{Sc07_metric}.

\begin{lemma}[Lemma 2.14 of \cite{Sc07_metric}]\label{lemma:separate}
Let $R>0$ be given. There is a $P_1=P_1(R)$ such that one can write a disjoint union
$$ \cB = \cB^1 \cup \dots \cup \cB^{P_1},$$
such that, for each $1\leq p \leq P$, if $B_1, B_2 \in \cB^{p}$ have the same radius $r$, then
$$ \dist(B_1, B_2) \geq R r.$$
The number $P_1$ depends only on $R$ and the doubling constant of $\Gamma$.
\end{lemma}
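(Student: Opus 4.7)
The plan is a standard covering-plus-coloring argument: reduce to a single scale at a time, use doubling to bound a conflict graph, then greedily color. First, for each integer $n$, I would let $\cB_n \subset \cB$ denote the balls of radius $A 2^{-n}$. Since the separation condition in the lemma only compares balls of equal radius, it is enough to color each $\cB_n$ independently using a shared palette $\{1,\dots,N\}$ and then set $\cB^p$ to be the union over all $n$ of the balls assigned color $p$; the desired property is preserved because two same-colored balls of the same radius must have come from the same scale.

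At a fixed scale $n$, two balls $B(x, A2^{-n}), B(y, A2^{-n}) \in \cB_n$ violate the separation condition precisely when $d(x,y) < (R+2)A 2^{-n}$. Because the centers are drawn from the $2^{-n}$-separated net $X^\Gamma_n$, the doubling property of $\Gamma$ gives a uniform bound $N = N(R, A, \text{doubling})$ on the number of points of $X^\Gamma_n$ lying in any ball of radius $(R+2)A 2^{-n}$. Thus the ``conflict graph'' on $\cB_n$, whose edges join pairs of balls that fail the separation condition, has maximum degree strictly less than $N$.

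I would then produce an $N$-coloring of this conflict graph by a routine greedy procedure: enumerate the balls of $\cB_n$ in any order (say via a well-ordering of the countable net $X^\Gamma_n$), and assign each ball a color from $\{1,\dots,N\}$ that is avoided by its finitely many already-colored conflicts. Combining across scales produces the decomposition with $P_1 = N$.

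There is no substantive obstacle here; the argument is standard. The only book-keeping worth attention is ensuring that the conflict radius $(R+2)A 2^{-n}$ remains a fixed multiple of the separation scale $2^{-n}$ so that the doubling bound $N$ is independent of $n$. Note that $N$ does depend on the inflation factor $A$ as well as on $R$ and the doubling constant; this matches the convention spelled out in the remark following Definition \ref{d:multiresolution}, under which $A$-dependence is suppressed from the statements throughout Sections \ref{sec:cubesfiltrations}--\ref{sec:flatarcsballs}.
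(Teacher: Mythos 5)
Your proof is correct. The paper itself does not prove this lemma --- it simply cites Lemma~2.14 of \cite{Sc07_metric} and notes that the proof there uses only the doubling property --- and your scale-by-scale argument (bound the degree of the conflict graph by doubling applied to the $2^{-n}$-separated net of centers inside a ball of radius $(R+2)A2^{-n}$, then greedily color) is exactly the standard argument behind that citation. One cosmetic point: two balls violating the separation implies $d(x,y) < (R+2)A2^{-n}$ but not conversely, so your conflict graph may contain extra edges; this only enlarges the graph you color and does not affect the argument.
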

Note that the proof of this lemma in \cite{Sc07_metric} relies only on the doubling property of $\Gamma$, and not its Ahlfors regularity.

Fix $R>10$ sufficiently large depending on $K$, to be determined after Lemma \ref{lemma:cubes} below, and apply Lemma \ref{lemma:separate} to obtain the disjoint decomposition
$$ \cB = \cB^1 \cup \dots \cup \cB^{P_1}.$$

Given $M\in\mathbb{N}$ and $1\leq p_1\leq P_1$, let 
$$\cB^{p_1}_M=\{B\in \cB^{p_1}: \frac{1}{2}\bi(B)^2\in [2^{-M}, 2^{-M+1})\}.$$
We then split this collection further into
$$\cB^{p_1}_M= \cB^{p_1}_{M,1}\cup....\cup \cB^{p_1}_{M,KM}$$
as follows.
For $1\leq i \leq KM$, set
$$ \cB^{p_1}_{M, i} = \{B\in \cB^{p_1}_M: r(B) = A 2^{-nKM + i}, n\in\mathbb{Z}\}$$
Note that if if $B_1, B_2\in \cB^{p_1}_{M, i}$ and have different radii $r_1> r_2$, respectively, then
$$r_2\leq 2^{-KM}r_1 \leq 2^{-K}r_1\,.$$

\begin{lemma}\label{lemma:cubes}
If $R$ is sufficiently large, depending on $K$, then for each $p_1, M, i$ as above, there exists a family $\cQ^{p_1}_{M, i}$ of sets with the following properties:
\begin{enumerate}[(i)] 
\item There is a bijection $Q:\cB^{p_1}_{M, i}\rightarrow \cQ^{p_1}_{M, i}$ such that
$$ 2B \subseteq Q(B) \subseteq (1+4\cdot 2^{-KM})2B.$$
\item If $Q,Q'\in \cQ^{p_1}_{M, i}$, then $Q\cap Q'=\emptyset$, $Q\subseteq Q'$, or $Q'\subseteq Q$.
\item If $B\neq B'\in\cB'$ have the same radius, then $Q(B)$ and $Q(B')$ are disjoint.
\end{enumerate}
\end{lemma}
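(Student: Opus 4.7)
The plan is to construct the cubes greedily, enumerating the balls of $\cB^{p_1}_{M,i}$ in order of non-decreasing radius and setting
\[ Q(B) \;=\; 2B \,\cup\, \bigcup \bigl\{ Q(B') : B' \in \cB^{p_1}_{M,i},\ r(B') < r(B),\ Q(B') \cap 2B \ne \emptyset \bigr\}. \]
Because the admissible radii lie on the discrete scale $\{A \cdot 2^{-nKM+i}\}$, any $B'$ absorbed into $Q(B)$ satisfies $r(B') \leq t\, r(B)$ with $t := 2^{-KM}$. The absence of a smallest radius I will handle by truncating to $r(B) \geq r_0$, building $Q$ by finite induction on the truncated subcollection, and taking the monotone union as $r_0 \to 0$.

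The size bound in (i) then follows by induction on $r(B)$. Under the inductive hypothesis $Q(B') \subseteq (1+4t)\,2B'$, together with $Q(B') \cap 2B \ne \emptyset$ and $r(B') \leq t\, r(B)$, a triangle inequality through a common point $p \in Q(B') \cap 2B$ gives, for any $y \in Q(B')$,
\[ d(y, \mathrm{center}(B)) \leq 2r(B) + 4(1+4t)\, r(B') \leq 2r(B)\bigl(1 + 2t + 8t^2\bigr) \leq 2r(B)(1 + 4t), \]
the last step using $t \leq 1/4$, which follows from $KM \geq 2$. Property (iii) is then immediate from (i) and Lemma \ref{lemma:separate}: distinct same-radius balls in $\cB^{p_1}$ have centers at distance at least $Rr$, while $Q(B)$ and $Q(B')$ lie in $(1+4t)\,2B$ and $(1+4t)\,2B'$, and these expanded balls are disjoint provided $R > 4(1+4t)$. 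This fixes how large $R$ must be taken.

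The main content is the nesting (ii), which I will prove by strong induction on $\max(r(B_1),r(B_2))$. Suppose $Q(B_1) \cap Q(B_2) \ne \emptyset$ with $r(B_1) > r(B_2)$ (the equal-radius case is ruled out by (iii)). Picking $x$ in the intersection and unwinding the definition of $Q(B_1)$, either (a) $x \in 2B_1$, in which case $Q(B_2) \cap 2B_1 \ne \emptyset$ and the greedy rule directly yields $Q(B_2) \subseteq Q(B_1)$; or (b) $x \in Q(B'')$ for some $B''$ with $r(B'') < r(B_1)$ and $Q(B'') \cap 2B_1 \ne \emptyset$. In case (b), $x \in Q(B'') \cap Q(B_2)$ lets me apply the inductive hypothesis to the strictly smaller pair $(B'', B_2)$: if $Q(B_2) \subseteq Q(B'')$, then $Q(B_2) \subseteq Q(B'') \subseteq Q(B_1)$; while if $Q(B'') \subseteq Q(B_2)$, then $Q(B_2) \supseteq Q(B'')$ meets $2B_1$, and the greedy rule again absorbs $Q(B_2)$ into $Q(B_1)$.

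The main obstacle I anticipate is the nesting step in (ii): one must carefully apply the inductive hypothesis to the pair $(B'',B_2)$ --- both of radius strictly less than $r(B_1)$, so the induction is well-founded --- and verify the subcase analysis closes without circularity. The remaining constants are essentially forced: $t \leq 1/4$ in (i) and $R > 4(1+4t)$ in (iii), so taking $R$ sufficiently large (depending on $K$ via $t = 2^{-KM}$) will secure all three properties simultaneously.
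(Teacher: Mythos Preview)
Your construction is correct and is essentially the one carried out in Proposition~2.15 of \cite{Sc07_metric}, which is all the paper invokes for its proof of this lemma. The greedy absorption rule, the inductive size estimate $Q(B')\subseteq (1+4t)2B'$ propagated via the triangle inequality, and the strong-induction nesting argument in (ii) are exactly the ingredients of that construction; the numerology $t\leq 1/4$ and $R>4(1+4t)$ matches as well.

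One small point worth tightening in your write-up: the passage to the limit $r_0\to 0$ for property~(ii) deserves a sentence. You have shown that for each fixed truncation level $r_0$ the nesting holds, and the $Q_{r_0}(B)$ increase as $r_0$ decreases; to conclude nesting for the limit cubes, observe that if $Q(B_1)\cap Q(B_2)\neq\emptyset$ with $r(B_1)>r(B_2)$, then for all sufficiently small $r_0$ the truncated cubes already intersect, the containment $Q_{r_0}(B_1)\subseteq Q_{r_0}(B_2)$ is ruled out by size (since $2B_1\not\subseteq (1+4t)2B_2$), and hence $Q_{r_0}(B_2)\subseteq Q_{r_0}(B_1)$ for every such $r_0$, giving $Q(B_2)\subseteq Q(B_1)$ in the limit. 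With that remark added, the argument is complete.
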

\begin{proof}
This construction is entirely contained in Proposition 2.15 of \cite{Sc07_metric} and the preceding discussion.
\end{proof}
We call the elements of $\cQ^{p_1}_{M, i}$ constructed by this lemma ``cubes''.
We note that they are simple variants of Christ's cubes \cite{Ch90}.

In the construction of the lemma, we write $B\colon \cQ^{p_1}_{M, i}\rightarrow \cB^{p_1}_{M, i}$ for the inverse of the map $Q$, so that $B(Q)$ denotes the ball in $\cB^{p_1}_{M, i}$ that gave rise to the cube $Q$. If $Q\in\cQ^{p_1}_{M, i}$, we will write $r(Q)$ and $c(Q)$ to denote the radius and center of the ball $B(Q)$, respectively, and we define
$$ \bi(Q) = \bi(B(Q)),$$ 
extending the definition of $\bi$ from balls to cubes.

For each $1\leq p_1 \leq P_1$, set
$$ \cQ^{p_1} = \bigcup_{M=1}^\infty\bigcup_{i=1}^{KM} \cQ^{p_1}_{M, i},$$
$$ \cQ = \bigcup_{p_1=1}^{P_1} \cQ^{p_1}.$$

\subsection{Defining partial filtrations by arcs}

Given a cube $Q\in \cQ$, let
$$\Lambda(Q)=\{\gamma|_I: I\subset \bT,\ I=\textrm{ a connected component of }\gamma^{-1}(Q),\ \gamma(I)\cap B(Q)\neq \emptyset\}$$

In particular, since $2B(Q)\subset Q$, this means that if $\tau\in \Lambda(Q)$, then $\diam(\tau)\geq r(B(Q)) \geq \frac{1}{6}\diam(Q)$, where $\diam(\tau)$ is always though of as the diameter of its {\it image}.

We need to know the following simple fact about our arcs:
\begin{lemma}\label{l:Q-eta}
If $Q,Q'\in \cQ^{p_1}_{M,i}$ and $\eta\in \Lambda(Q)$ and $\eta\in \Lambda(Q')$, then $Q=Q'$.
\end{lemma}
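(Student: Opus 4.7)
The plan is to argue by contradiction, assuming $Q \neq Q'$. Since $\eta = \gamma|_I$ lies in both $\Lambda(Q)$ and $\Lambda(Q')$, the image $\gamma(I)$ is a nonempty subset of $Q \cap Q'$. Property (ii) of Lemma \ref{lemma:cubes} then forces the two cubes to be comparable under inclusion, while property (iii) rules out the case of equal radii (since distinct cubes of equal radius would have to be disjoint). Thus without loss of generality $Q \subsetneq Q'$, and by the construction of $\cQ^{p_1}_{M,i}$ in subsection \ref{subsec:cubes}, in fact $r(Q') \geq 2^{KM} r(Q)$.

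The central step will be to extract a point of $\gamma(I) \subseteq Q$ that lies on $\partial Q'$, which then must be far from $c(Q')$. In the generic subcase $I \neq \bT$, I take $p = \gamma(t)$ for an endpoint $t$ of $I$. Because $I$ is a maximal connected subset of $\gamma^{-1}(Q')$, continuity of $\gamma$ places $\gamma(t)$ on the topological boundary $\partial Q'$; combined with the inclusion $2B(Q') \subseteq Q'$, this yields $d(p, c(Q')) \geq 2 r(Q')$. Meanwhile $p \in Q \subseteq (1 + 4 \cdot 2^{-KM}) 2B(Q)$ gives $d(p, c(Q)) \leq 2(1 + 4 \cdot 2^{-KM}) r(Q)$. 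Combining this with a second point $y \in \gamma(I) \cap B(Q')$ (present because $\eta \in \Lambda(Q')$), which satisfies $d(y, c(Q')) \leq r(Q')$ and $d(y, c(Q)) \leq 2(1 + 4 \cdot 2^{-KM}) r(Q)$, two applications of the triangle inequality to $d(c(Q), c(Q'))$ will yield
$$ 2 r(Q') - 2(1 + 4 \cdot 2^{-KM}) r(Q) \;\leq\; d(c(Q), c(Q')) \;\leq\; 2(1 + 4 \cdot 2^{-KM}) r(Q) + r(Q'),$$
and hence $r(Q') \leq 4(1 + 4 \cdot 2^{-KM}) r(Q) < 8 r(Q)$. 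This contradicts $r(Q') \geq 2^{KM} r(Q)$ whenever $KM \geq 4$, which holds by the standing assumption that $K$ is large.

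The main obstacle I anticipate is the degenerate subcase $I = \bT$, where there is no endpoint to exploit. Here $\Gamma = \gamma(\bT) \subseteq Q$ and, since net points lie in $\Gamma$, we have $c(Q') \in \Gamma \subseteq (1 + 4 \cdot 2^{-KM}) 2B(Q)$, giving $d(c(Q), c(Q')) \leq 2(1 + 4 \cdot 2^{-KM}) r(Q)$. To close this case, I would invoke the nesting $X^E_{n+1} \subseteq X^E_n$ from Definition \ref{d:multiresolution}: writing $r(Q) = A 2^{-n}$ and $r(Q') = A 2^{-n'}$ with $n > n'$, this nesting places $c(Q) \in X^E_n \subseteq X^E_{n'}$ alongside $c(Q') \in X^E_{n'}$, so that $d(c(Q), c(Q')) > 2^{-n'} = r(Q')/A$. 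The two bounds on $d(c(Q), c(Q'))$ then force $r(Q') < 2A(1 + 4 \cdot 2^{-KM}) r(Q)$, which contradicts the scale gap $r(Q') \geq 2^{KM} r(Q)$ once $K$ is chosen large enough compared to the inflation factor $A$.
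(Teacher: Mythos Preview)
Your argument is correct in the main case and follows the same underlying strategy as the paper: reduce to nested cubes with a scale gap $r(Q') \geq 2^{KM} r(Q)$, then derive a contradiction from the fact that $\eta$ has diameter comparable to both $r(Q)$ and $r(Q')$. The paper's execution is much shorter, however. Having already observed (immediately after defining $\Lambda(Q)$) that every $\tau \in \Lambda(Q)$ satisfies $r(B(Q)) \leq \diam(\tau) \leq \diam(Q) \leq 5\,r(B(Q))$, the paper simply applies this bound simultaneously to $Q$ and to $Q'$, obtaining $r' \leq \diam(\eta) \leq 5r$, which contradicts $r'/r \geq 2^K > 10$ in one line. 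Your point-chasing with $p \in \partial Q'$ and $y \in B(Q')$ is in effect a hand-rolled version of that same diameter bound.

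Your separate treatment of the degenerate case $I = \bT$ goes beyond what the paper does, but it has a gap: the nesting argument does not exclude the possibility $c(Q) = c(Q')$, and with the intended direction of nesting $X_{n'} \subseteq X_n$ for $n'<n$ (the paper's stated $X_{n+1}\subseteq X_n$ is evidently a typo) the separation you would obtain is only $>2^{-n}=r(Q)/A$, not $r(Q')/A$, which gives no contradiction. This is harmless, though: if $I=\bT$ then $\Gamma\subseteq Q$, whence $\bi(B(Q'))^2 \leq \diam(\Gamma)/r(Q') \leq 5\cdot 2^{-KM}$, which is far too small for $B(Q')$ to lie in $\cB^{p_1}_M$. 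So the degenerate case cannot occur under the hypotheses, and neither proof really needs to address it.
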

\begin{proof}
Assume that $\eta\in \Lambda(Q)$ and $\eta\in \Lambda(Q')$ but $Q\neq Q'$

The assumption that $\eta$ is in both $\Lambda(Q)$ and $\Lambda(Q')$ implies that $Q\cap Q'\neq \emptyset$, and hence that $Q\subseteq Q'$ or $Q'\subseteq Q$. Assume without loss of generality that $Q\subseteq Q'$.

Let $B= B(Q)$ have radius $r$ and $B'=B(Q')$ have radius $r'$. Note that we cannot have $r=r'$, since if $r=r'$ then $\dist(B,B')>Rr>10r>\diam(Q')$, which is impossible since both $B$ and $B'$ are contained in $Q'$, which has diameter at most $3r$.

Therefore we either have $r<r'$ or $r'<r$. In either case, the ratio between the larger and smaller of $r$ and $r'$ is at least  $2^K > 10$.

On the other hand, $\eta\in \Lambda(Q)$ implies that
$$ r \leq \diam(\eta) \leq 5r$$
and similarly $\eta\in\Lambda(Q')$ implies that
$$ r'\leq \diam(\eta)\leq 5r'.$$

This is a contradiction.

\end{proof}

Next, for each positive integer $M\in\mathbb{N}$ and $1\leq i \leq kM$, we let 
$$\cF^{p_1}_{M,i}=\cup\{\Lambda(Q):\ Q\in \cQ^{p_1}_{M,i}\}$$
and endow it with the partial order given by containment. 

We note that we do not expect any $\cF^{p_1}_{M,i}$ to cover all of $\Gamma$, nor for each ``level'' of $\cF^{p_1}_{M,i}$ to cover the previous ``level''. Thus, we consider $\cF^{p_1}_{M,i}$ a ``partial filtration'' of $\gamma$.

\subsection{Completing the partial filtrations}

Our next goal is to complete each partial filtration $\cF^{p_1}_{M,i}$ to a ``full'' filtration of the portion of $\Gamma$ covered by its maximal elements. In other words, we will define a collection  $\widehat{\cF^{p_1}_{M,i}}$ of arcs in $\Gamma$ such that:
\begin{itemize}
\item For each $k\geq 0$, the collection of arcs in $\widehat{\cF^{p_1}_{M,i}}$ with exactly $k$ ancestors is a disjoint (up to endpoints) cover of the union of maximal elements of $\cF^{p_1}_{M,i}$. 
\item $ \widehat{\cF^{p_1}_{M,i}} \supseteq \cF^{p_1}_{M,i}$
\end{itemize}

We will perform this completion by starting with $\cF^{p_1}_{M,i}$ and adding subarcs of $\bT$ to our filtration in a way which we now specify. The construction below has the following property: if $\tau_2\subset \tau_1$ are both in $\cF^{p_1}_{M,i}$ and there is no arc $\tau\in \cF^{p_1}_{M,i}$ such that  $\tau_2\subset \tau\subset \tau_1$, then there will be no such arc $\tau \in \widehat{\cF^{p_1}_{M,i}}$ either. 

For each arc $\tau\in \widehat{\cF^{p_1}_{M,i}}$, we will define $\children(\tau)$ to be the maximal elements in $\widehat{\cF^{p_1}_{M,i}}$ contained in $\tau$.
We will write $\children(\tau)=\children_1(\tau)\cup \children_2(\tau)$, a disjoint (up to endpoints) union, and refer to these as {\it type I} and {\it type II} children, where 
$$\children_1(\tau)=\children(\tau)\cap \cF^{p_1}_{M,i}$$ 
and
$$\children_2(\tau)=\children(\tau)\setminus \cF^{p_1}_{M,i}\,.$$ 

The filtration $\widehat{\cF^{p_1}_{M,i}}$ will always have roots in $\cF^{p_1}_{M,i}$.

In order to specify $\widehat{\cF^{p_1}_{M,i}}$, we may simply specify 
$\children(\tau)$ for $\tau \in \widehat{\cF^{p_1}_{M,i}}$.
For  $\tau\notin \cF^{p_1}_{M,i}$ we will always have 
$\children(\tau)= \tau$. (In other words, type $2$ children are never subdivided further.) 
For 
$\tau\in \cF^{p_1}_{M,i}$, the collection $\children_1(\tau)$ is given by maximal $\cF^{p_1}_{M,i}$ elements inside $\tau$, and we need to specify  
$\children_2(\tau)$ such that $\tau$ can be written as a disjoint union $\cup \children(\tau)$.

To this end, we will simply set $\children_2(\tau)$ to be any finite partition of  $\tau \setminus \cup\children_1(\tau)$ into arcs, subject to the condition that
$$ \diam(\eta) \leq  2^{-KM} \diam(\tau) \text{ for all } \eta\in \children_2(\tau).$$
Note that, by our choice of $\cF^{p_1}_{M,i}$, the above inequality is satisfied also when $\eta\in \children_1(\tau)$.

We have now completed our definition of $\children(\tau) = \children_1(\tau) \cup \children_2(\tau)$. The filtration $\widehat{\cF^{p_1}_{M,i}}$ is then defined inductively beginning with the maximal elements of $\cF^{p_1}_{M,i}$ and using the $\children(\tau)$ operation repeatedly.

\subsection{Flat versus Non-flat balls and the two halves of Theorem \ref{thm:upperbound}}\label{subsec:flatnonflat}
In proving Theorem \ref{thm:upperbound}, it is convenient to first dispose of the collection of balls that are too large. 
Let $\cB_0$ be the collection of balls $B\in\cB$ such that $\diam(B) \geq \frac{1}{10}\diam(\Gamma)$. 
Then we have the following:
\begin{lemma}\label{lemma:endpoint}
For each $p>2$,
$$\sum_{B\in \cB_0} \bi(B)^p\diam(B) \lesssim \length(\gamma),$$
where the implied constant depends only on $p>2$ and the doubling constant of $\Gamma$.
\end{lemma}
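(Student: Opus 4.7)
The plan is to prove the stronger estimate $\sum_{B \in \cB_0}\bi(B)^p\,\diam(B) \lesssim \diam(\Gamma)$, which implies the lemma because $\gamma$ surjects onto $\Gamma$ and so $\length(\gamma) \geq \diam(\Gamma)$.

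The first step will be a crude pointwise bound valid for every ball. Since $\del(u,v,w) \leq \diam\{u,v,w\} \leq \diam(\Gamma)$ for any triple $u,v,w \in \Gamma$, one has, for every ball $B = B(x,r)$,
$$ \bi(B)^2\, r \;\leq\; \diam(\Gamma),$$
and therefore
$$ \bi(B)^p\,\diam(B) \;\leq\; 2\,\diam(\Gamma)^{p/2}\, r^{\,1-p/2}.$$
The exponent $1-p/2$ is negative because $p>2$, so the right-hand side decays in $r$. This is the only place where the hypothesis $p>2$ is essential.

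Next I would organize $\cB_0$ by scale. A ball $B=B(x,A2^{-n})\in \cB$ lies in $\cB_0$ precisely when $2^{-n}\geq \diam(\Gamma)/(20A)$. For such $n$, the net $X_n^\Gamma$ is $2^{-n}$-separated inside a set of diameter at most $20A\cdot 2^{-n}$, so doubling of $\Gamma$ gives $\#X_n^\Gamma \lesssim 1$, with implied constant depending only on $A$ and the doubling constant.

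Combining the two inputs, I would compute
\begin{align*}
\sum_{B\in\cB_0}\bi(B)^p\,\diam(B)
&\lesssim \diam(\Gamma)^{p/2}\sum_{n\,:\,2^{-n}\geq \diam(\Gamma)/(20A)}(A\,2^{-n})^{\,1-p/2}\\
&\lesssim \diam(\Gamma)^{p/2}\cdot \diam(\Gamma)^{\,1-p/2}\;=\;\diam(\Gamma),
\end{align*}
where, since $1-p/2<0$, the geometric series in $n$ is dominated by its term at the smallest admissible scale $A2^{-n}\sim \diam(\Gamma)/20$. No serious obstacle is anticipated here: the estimate is a direct consequence of the triangle-inequality bound on $\del$, doubling of $\Gamma$, and convergence of a single geometric series that is forced by $p>2$.
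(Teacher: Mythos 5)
Your proposal is correct and follows essentially the same route as the paper's proof: the crude bound $\bi(B)^2\rad(B)\le\diam(\Gamma)$ from $\del\le\diam$, a doubling count of $O(1)$ balls per admissible scale, and a geometric series convergent precisely because $p>2$. The only (cosmetic) difference is that your version is scale-invariant and does not invoke the normalization $\diam(\Gamma)\lesssim 1$.
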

\begin{proof}
Let $n_0$ be the smallest integer such that $A2^{-n}\geq \frac{1}{10}\diam(\Gamma)$. Each $B\in \cB_0$ is at scale $2^{-n}$ for some $n\leq n_0$. By the doubling property of $\Gamma$, there are at most a fixed number $D$ of balls $B\in \cB_0$ at each such scale. Therefore
\begin{align*}
\sum_{B\in \cB_0} \bi(B)^p\diam(B) &\leq \sum_{B\in \cB_0} \left(\frac{\diam(\Gamma)}{\rad(B)}\right)^{p/2} \diam(B)\\
 &\lesssim \sum_{n=-n_0}^\infty \diam(\Gamma)^{p/2} 2^{n(1-p/2)}\\
&\lesssim \diam(\Gamma)^{p/2} 2^{-n_0(1-p/2)}\\
&\lesssim \diam(\Gamma)\\
&\leq \length(\gamma)
\end{align*}
Note that above we used the fact that $\diam(\Gamma)\lesssim 1$ which we had already assumed without loss of generality in section \ref{subsec:curve}.
\end{proof}

We may now focus on balls $B\in \cB\setminus\cB_0$. Note that for such balls, 
$$ \rad(B) \leq  \diam(B) \leq 2\rad(B).$$

We define two classes of balls in $\cB\setminus \cB_0$ based on the notions defined in this section, calling them colloquiually ``non-flat'' balls and `flat'' balls. Let
$$ \cB_1 = \{B\in \cB\setminus \cB_0: \bti(\tau) > \eb\bi(B) \text{ for some } \tau\in\Lambda(Q(B))\}$$
$$ \cB_2 = \{B\in \cB\setminus \cB_0: \bti(\tau) \leq \eb\bi(B) \text{ for all } \tau\in\Lambda(Q(B))\}$$
For each non-flat ball $B$ with associated $Q=Q(B)$, we will  fix an arc $\tau_Q\in \Lambda(Q)$ which satisfies
$$ \bti(\tau) > \eb\bi(B).$$

We will show the following two propositions.

\begin{proposition}\label{prop:nonflat}
For all $p>2$, 
$$\sum_{B\in\cB_1} \bi(B)^p\diam(B) \lesssim \length(\gamma)$$
The implied constant depends only on $p$ and the doubling constant of $\Gamma$.
\end{proposition}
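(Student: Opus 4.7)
The plan is to decompose $\cB_1 = \bigcup_{p_1, M, i}(\cB^{p_1}_{M, i} \cap \cB_1)$ and, for each fixed $(p_1, M, i)$, to pass from the $\bi$-sum on non-flat balls to a sum of $\bti(\tau_Q)^2 \diam(\tau_Q)$ over the associated arcs $\tau_Q \in \cF^{p_1}_{M, i}$. The reduction uses only the non-flat condition together with the inclusion $\diam(\tau_Q) \geq r(B(Q)) \geq \diam(B)/2$ coming from $\tau_Q \in \Lambda(Q)$: these give $\bi(B)^2 \diam(B) \leq 2\eb^{-2} \bti(\tau_Q)^2 \diam(\tau_Q)$, and Lemma~\ref{l:Q-eta} rules out double counting since distinct cubes yield distinct $\tau_Q$. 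Combined with the membership bound $\bi(B)^{p-2} \lesssim_p 2^{-M(p-2)/2}$ for $B \in \cB^{p_1}_{M, i}$, this reduces Proposition~\ref{prop:nonflat} to the per-filtration estimate
\[\sum_{Q \in \cQ^{p_1}_{M, i},\ B(Q) \in \cB_1} \bti(\tau_Q)^2 \diam(\tau_Q) \ \lesssim\ \length(\gamma),\]
after which summing the resulting bound $\eb^{-2} 2^{-M(p-2)/2} \length(\gamma)$ over the $P_1 \cdot KM$ choices of $(p_1, i)$ and over $M \geq 1$ produces the series $\sum_M M \cdot 2^{-M(p-2)/2}$, finite precisely when $p > 2$.

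For the per-filtration estimate I would work inside the completed filtration $\widehat{\cF^{p_1}_{M, i}}$ and relate $\bti(\tau_Q)^2 \diam(\tau_Q)$ to the \emph{chord defect} $\mathrm{CD}(\tau_Q) := \sum_{j=1}^{k} d(\gamma(p_{j-1}), \gamma(p_j)) - d(\gamma(p_0), \gamma(p_k))$, where $p_0 < p_1 < \cdots < p_k$ are the endpoints of the children of $\tau_Q$. Fix a near-maximizing triple $a_Q < b_Q < c_Q \in I_{\tau_Q}$ for $\bti(\tau_Q)$: the non-flat condition gives $\del_1(\gamma(a_Q), \gamma(b_Q), \gamma(c_Q)) \gtrsim \eb^2 \cdot 2^{-M}\diam(\tau_Q)$, while every child of $\tau_Q$ in $\widehat{\cF^{p_1}_{M, i}}$ has diameter $\lesssim 2^{-KM}\diam(\tau_Q)$. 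Because $2^{-K} \leq \eb^2/100$, one can replace $a_Q, b_Q, c_Q$ by child endpoints $p_{i_1} \leq p_{i_2} \leq p_{i_3}$ while losing only a constant factor, so that $\del_1(\gamma(p_{i_1}), \gamma(p_{i_2}), \gamma(p_{i_3})) \gtrsim \bti(\tau_Q)^2 \diam(\tau_Q)$. A direct triangle-inequality manipulation of the telescoping expression for $\mathrm{CD}(\tau_Q)$ then shows that $\del_1$ of \emph{any} triple of subdivision endpoints is at most $\mathrm{CD}(\tau_Q)$, giving $\bti(\tau_Q)^2 \diam(\tau_Q) \lesssim \mathrm{CD}(\tau_Q)$.

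To conclude, since distinct $\tau_Q$'s are distinct internal nodes of $\widehat{\cF^{p_1}_{M, i}}$, one has $\sum_Q \mathrm{CD}(\tau_Q) \leq \sum_{\tau \text{ internal}} \mathrm{CD}(\tau)$, and iterating the additive identity
\[\length(\tau) - d(\gamma(s_\tau), \gamma(t_\tau)) \ = \ \mathrm{CD}(\tau) \ + \ \sum_{\eta \in \children(\tau)}\bigl(\length(\eta) - d(\gamma(s_\eta), \gamma(t_\eta))\bigr)\]
downwards from the maximal elements of $\cF^{p_1}_{M, i}$ yields $\sum_{\tau \text{ internal}} \mathrm{CD}(\tau) \leq \sum_{\tau \text{ maximal}}(\length(\tau) - d(\gamma(s_\tau), \gamma(t_\tau))) \leq \length(\gamma)$, the last inequality using that the maximal elements of $\cF^{p_1}_{M, i}$ are disjoint subintervals of $\bT$. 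The step I expect to be the main obstacle is the coarsening in the second paragraph: controlling the perturbation uniformly in $M$ amounts to balancing the child-diameter bound against the $\cB_1$-lower bound on $\del_1(\gamma(a_Q), \gamma(b_Q), \gamma(c_Q))$, and this is precisely where the quantitative choice $2^{-K} \leq \eb^2/100$ from the beginning of Section~\ref{sec:cubesfiltrations} must be exploited.
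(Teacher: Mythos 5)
Your proposal is correct and follows essentially the same route as the paper: the same decomposition over $(p_1,M,i)$, the same reduction of $\bti(\tau_Q)^2\diam(\tau_Q)$ to the chord defect via perturbation to child endpoints (using $2^{-K}\leq\eb^2/100$ against the non-flatness lower bound, as in Lemma~\ref{lemma:btibound}), the same telescoping over the completed filtration (Lemma~\ref{lemma:onefamily}), and the same final geometric series in $M$. No gaps to report.
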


\begin{proposition}\label{prop:flat}
For all $p>2$, 
$$\sum_{B\in\cB_2} \bi(B)^p\diam(B) \lesssim \length(\gamma).$$
The implied constant depends only on $p$ and the doubling constant of $\Gamma$.
\end{proposition}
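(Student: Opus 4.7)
The plan is a martingale-style telescoping argument on the completed filtrations $\widehat{\cF^{p_1}_{M,i}}$, in the spirit of the Hilbert space proof in \cite{Sc07_hilbert} and its metric adaptation in \cite{Sc07_metric}. The starting point is a clean telescoping identity: for each arc $\tau \in \widehat{\cF^{p_1}_{M,i}}$, set
$$ D(\tau) := \sum_{\eta \in \children(\tau)} \diam(\eta) \;-\; \diam(\tau) \;\geq\; 0. $$
Summed along a single rooted branch of the filtration, $\sum D(\tau)$ telescopes to $\sum_{\text{leaves}} \diam - \diam(\text{root})$, which is at most $\length(\gamma) - \diam(\text{root})$ since the leaves partition the root arc and $\diam \leq \ell$. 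The roots of $\widehat{\cF^{p_1}_{M,i}}$ correspond to disjoint subarcs of $\gamma$, so summing over roots yields
$$ \sum_{\tau \in \widehat{\cF^{p_1}_{M,i}}} D(\tau) \;\lesssim\; \length(\gamma). $$

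The heart of the proof will be a geometric lemma charging each flat ball to an arc of the filtration. For each $B = B(Q) \in \cB_2$ with $Q \in \cQ^{p_1}_{M,i}$, I will produce a canonical arc $\widehat{\tau}_Q \in \widehat{\cF^{p_1}_{M,i}}$---for instance, the smallest element of the filtration containing the triple $\{x,y,z\} \subset \Gamma \cap B$ that witnesses $\bi(B)$---and show
$$ D(\widehat{\tau}_Q) \;\gtrsim\; \bi(B)^2 \diam(B). $$
The mechanism is a quantitative ``folding'' estimate. Since $\bti(\tau) \leq \eb \bi(B)$ for every $\tau \in \Lambda(Q)$, the triple $\{x,y,z\}$ cannot lie on any single arc of $\Lambda(Q)$ (else its deficit would contradict the smallness of $\bti(\tau)$, provided $\eb$ is small enough), so it must be distributed across at least two distinct arcs of $\Lambda(Q)$; between these arcs the parametrization of $\gamma$ exits and re-enters $Q$. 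This detour inflates $\sum_\eta \diam(\eta)$ over $\diam(\widehat{\tau}_Q)$ by an amount comparable to $\del(x,y,z) \geq \tfrac{1}{2}\bi(B)^2 r(B)$. Injectivity of $B \mapsto \widehat{\tau}_Q$ on $\cB^{p_1}_{M,i} \cap \cB_2$ follows from Lemma \ref{l:Q-eta} together with the separation of balls at a common scale built into the construction of $\cB^{p_1}$.

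Combining the two ingredients yields $\sum_{B \in \cB^{p_1}_{M,i} \cap \cB_2} \bi(B)^2 \diam(B) \lesssim \length(\gamma)$ for each $(p_1, M, i)$. Since $\bi(B) \sim 2^{-M/2}$ on $\cB_M$, we have $\bi(B)^p \diam(B) \sim 2^{-M(p-2)/2}\bi(B)^2 \diam(B)$; summing over $1 \leq p_1 \leq P_1$, $1 \leq i \leq KM$, and $M \geq 1$ produces
$$ \sum_{B \in \cB_2} \bi(B)^p \diam(B) \;\lesssim\; \length(\gamma) \sum_{M \geq 1} P_1 K M \cdot 2^{-M(p-2)/2}, $$
a polynomial-times-geometric series that converges precisely when $p > 2$. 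This is where the sharp exponent enters, matching the $p=2$ counterexample of Remark \ref{rmk:counterex}. The principal obstacle will be the geometric lemma: in a general doubling metric space, without projections or linear structure, extracting the bound $D(\widehat{\tau}_Q) \gtrsim \bi(B)^2 \diam(B)$ from the flatness of each individual arc requires a delicate metric comparison of the arcs in $\Lambda(Q)$, exploiting the smallness of $\eb$, the separation built into $\cB^{p_1}$, and the doubling property of $\Gamma$.
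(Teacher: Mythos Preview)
Your high-level plan (telescoping plus a geometric gain, then summing the geometric series in $M$) is the right shape, and your observation that the witness triple $\{x,y,z\}$ cannot lie on a single arc of $\Lambda(Q)$ is exactly the crucial geometric fact. But the bookkeeping framework you chose does not capture that fact, and the ``geometric lemma'' step has a real gap.

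The problem is the arc $\widehat{\tau}_Q$. You want the smallest element of $\widehat{\cF^{p_1}_{M,i}}$ whose image contains the triple. But when $Q$ is a \emph{maximal} cube in $\cQ^{p_1}_{M,i}$, the arcs of $\Lambda(Q)$ are roots of the filtration; if the triple is spread across two of them, no element of $\widehat{\cF^{p_1}_{M,i}}$ contains both, so $\widehat{\tau}_Q$ does not exist. Even when $Q$ has a parent $Q^*$ and some arc of $\Lambda(Q^*)$ does contain the relevant arcs of $\Lambda(Q)$, that parent arc lives at scale $\geq 2^{KM}\diam(Q)$ and may have many children from sibling cubes; neither the bound $D(\widehat{\tau}_Q)\gtrsim \bi(B)^2\diam(B)$ nor the injectivity of $B\mapsto\widehat{\tau}_Q$ follows from the ``detour'' picture you describe. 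The point is that the gain comes from \emph{multiple disjoint arcs} passing through $Q$, and an arc-based hierarchy that only ever subdivides single arcs cannot see that multiplicity at the scale of $Q$.

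The paper therefore runs the martingale on \emph{cubes}, not arcs. Writing $Q=R_Q\cup\bigcup Q^j$ with $Q^j$ the maximal flat subcubes, the key estimate (Proposition~\ref{prop:219}) is
\[
\length(R_Q)+\sum_j\diam(Q^j)\ \geq\ (1+c\,\bi(B)^2)\,\diam(Q).
\]
Here the left side naturally accounts for all arcs through $Q$. The proof fixes one arc $\xi\in\Lambda(Q)$ through the center, uses $\bti(\xi)\leq\eb\bi(B)$ to get $\diam(\xi)\geq(1-\epsilon_3\bi(B)^2)\diam(Q)$, and then---because all arcs are flat but $\bi(B)$ is not small---finds a point of $B$ at distance $\gtrsim\bi(B)^2\diam(Q)$ from $\xi$, forcing a second arc that contributes the extra $c\,\bi(B)^2\diam(Q)$. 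A weight $w_Q$ is then distributed down the cube hierarchy; the inequality above makes the density $w_Q(Q')/\diam(Q')$ contract by a factor $(1+c'2^{-M})^{-1}$ at each step, so $\sum_Q w_Q\lesssim 2^M$ pointwise, which gives exactly your final series $\sum_M P_1KM\,2^{-(p/2-1)M}$. Your outer summation is fine; what needs to change is replacing the arc-level quantity $D(\tau)$ by the cube-level excess above.
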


These two Propositions, along with Lemma \ref{lemma:endpoint}, combine immediately to prove Theorem \ref{thm:upperbound}.

\section{Non-flat arcs and balls}\label{sec:nonflat}
In this section, we prove Proposition \ref{prop:nonflat}.

We begin with a few lemmas. Recall the definitions of $\del_1$ and $\del$ from subsection \ref{sec:beta}.

\begin{lemma}
Let $\tau$ be an arc in some $\widehat{\cF^{p_1}_{M,i}}$ and let $\children(\tau)$ be the partition of $\tau$ into its children.
 
For each $\eta\in \children(\tau)$ write $a(\eta)$ and $b(\eta)$ for the start and finish of $\eta$ (in the domain of $\tau$).  

Let $a,b,c\in \domain(\tau)$ be three points that are each a start or end of an element in $\children(\tau)$ and such that $a\leq b \leq c$. 
Then 
$$\partial_1(\gamma(a),\gamma(b),\gamma(c))\leq \sum\limits_{\eta\in \children(\tau)}d(\gamma(a(\eta)),\gamma(b(\eta))) - d(\gamma(a(\tau)),\gamma(b(\tau)))$$
\end{lemma}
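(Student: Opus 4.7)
The proof is a direct application of the triangle inequality (and its reverse form) along the chain of children. Let me sketch the plan.

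Enumerate $\children(\tau)=\{\eta_1,\dots,\eta_N\}$ in order along $\tau$, so that $a(\eta_1)=a(\tau)$, $b(\eta_N)=b(\tau)$, and $b(\eta_j)=a(\eta_{j+1})$ for each $j$. Because $a,b,c$ are by hypothesis endpoints of children, each of them coincides with one of the points $a(\eta_j)$ (or equivalently, $b(\eta_{j-1})$). Thus the collection of children splits cleanly into four consecutive blocks according to whether the child sits strictly before $a$, between $a$ and $b$, between $b$ and $c$, or strictly after $c$. Denote the corresponding partial sums
\[
S_1=\!\!\sum_{\eta:\,a(\eta)<a}\!\!d(\gamma(a(\eta)),\gamma(b(\eta))),\ \
S_2=\!\!\sum_{\eta:\,a\le a(\eta)<b}\!\!d(\gamma(a(\eta)),\gamma(b(\eta))),
\]
\[
S_3=\!\!\sum_{\eta:\,b\le a(\eta)<c}\!\!d(\gamma(a(\eta)),\gamma(b(\eta))),\ \
S_4=\!\!\sum_{\eta:\,c\le a(\eta)}\!\!d(\gamma(a(\eta)),\gamma(b(\eta))).
\]
Then $S_1+S_2+S_3+S_4 = \sum_{\eta\in\children(\tau)} d(\gamma(a(\eta)),\gamma(b(\eta)))$.

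Next, apply the ordinary triangle inequality four times along the chains of children in each of the four blocks. This yields, respectively,
\[
d(\gamma(a(\tau)),\gamma(a))\le S_1,\quad d(\gamma(a),\gamma(b))\le S_2,
\]
\[
d(\gamma(b),\gamma(c))\le S_3,\quad d(\gamma(c),\gamma(b(\tau)))\le S_4.
\]
Now use the reverse triangle inequality along the three-point chain $a(\tau), a, c, b(\tau)$:
\[
d(\gamma(a(\tau)),\gamma(b(\tau))) \le d(\gamma(a(\tau)),\gamma(a)) + d(\gamma(a),\gamma(c)) + d(\gamma(c),\gamma(b(\tau))),
\]
which, together with the first and last bounds above, gives
\[
d(\gamma(a),\gamma(c)) \ge d(\gamma(a(\tau)),\gamma(b(\tau))) - S_1 - S_4.
\]

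Finally, plug everything into the definition of $\partial_1$:
\[
\partial_1(\gamma(a),\gamma(b),\gamma(c)) = d(\gamma(a),\gamma(b))+d(\gamma(b),\gamma(c))-d(\gamma(a),\gamma(c)),
\]
so that
\[
\partial_1(\gamma(a),\gamma(b),\gamma(c)) \le S_2+S_3 - \bigl(d(\gamma(a(\tau)),\gamma(b(\tau))) - S_1 - S_4\bigr),
\]
which rearranges to the claimed inequality. There is no real obstacle here; the only thing one must be careful about is that $a,b,c$ are required to be child endpoints so that the four blocks of children line up exactly with the subarcs $[a(\tau),a]$, $[a,b]$, $[b,c]$, $[c,b(\tau)]$ without any partial-child cutoff. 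The parametrization issue from $\bT=\bR/\bZ$ is harmless once we restrict to $\domain(\tau)\subseteq \bT$, on which the ordering is linear.
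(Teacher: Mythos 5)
Your proof is correct and is exactly the argument the paper has in mind: the paper's own proof consists of the single sentence ``This is a direct application of the triangle inequality,'' and your four-block decomposition with the chain bounds $d(\gamma(a(\tau)),\gamma(a))\le S_1$, $d(\gamma(a),\gamma(b))\le S_2$, $d(\gamma(b),\gamma(c))\le S_3$, $d(\gamma(c),\gamma(b(\tau)))\le S_4$, combined with the upper bound on $d(\gamma(a(\tau)),\gamma(b(\tau)))$ to get the needed lower bound on $d(\gamma(a),\gamma(c))$, is precisely the intended computation spelled out in full.
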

\begin{proof}
This is a direct application of the triangle inequality.
\end{proof}

Now consider any $Q\in\cQ^{p_1}_{M,i}$ and $\tau=\gamma|_I\in\Lambda(Q)$. If $t_1\leq t_2 \leq t_3$ are in $I$, then, by shifting each point $t_i$ to an endpoint of the arc in $\children(\tau)$ containing it, we may find points  $t'_1 \leq t'_2 \leq t'_3$ such that
$$ d(\gamma(t_i), \gamma(t'_i)) \leq 2^{-K}\bi(Q)^2\diam(Q) \text{ for } i=1,2,3.$$
Therefore, if we write $s(\eta)=\gamma(a(\eta))$ and $f(\eta) = \gamma(b(\eta))$ for the start and end of an arc in the image, we have
\begin{align*}
 \partial_1(\gamma(t_1), \gamma(t_2), \gamma(t_3)) &\leq \partial_1(\gamma(t'_1), \gamma(t_2'), \gamma(t_3')) + 6\cdot 2^{-K}\bi(Q)^2\diam(Q)\\
 &\leq  \sum\limits_{\eta\in \children(\tau)} d(s(\eta),f(\eta)) - d(s(\tau),f(\tau)) + 6\cdot 2^{-K}\bi(Q)^2\diam(Q).
\end{align*}

Hence, we have proven the following lemma:
\begin{lemma}\label{lemma:btibound}
For $Q\in \cQ^{p_1}_{M,i}$ and $\tau\in \Lambda(Q)$, we have
$$ \bti(\tau)^2\diam(\tau) \leq \sum\limits_{\eta\in \children(\tau)}\dist(s(\eta),f(\eta)) - \dist(s(\tau),f(\tau)) + 62^{-K}\bi(Q)^2\diam(Q).$$
If moreover $B(Q)\in \cB_1$ (i.e., is non-flat) and $\tau=\tau_Q$, then 
\begin{equation}\label{e:telescoping-1}
\bti(\tau)^2\diam(\tau)\leq 
2\left(
\sum\limits_{\eta\in \children(\tau)}\dist(s(\eta),f(\eta)) - \dist(s(\tau),f(\tau))
\right)
 \end{equation}

\end{lemma}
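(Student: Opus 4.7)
The first inequality is essentially already established by the displayed computation immediately preceding the lemma statement: for arbitrary $t_1\leq t_2\leq t_3$ in the domain $I$ of $\tau$, one shifts each $t_i$ to the nearest endpoint of the child of $\tau$ containing it, introducing an error of at most $2^{-K}\bi(Q)^2\diam(Q)$ per point (since children have diameter at most $2^{-KM}\diam(\tau)\leq 2^{-K}\bi(Q)^2\diam(Q)$, using $M\geq 1$ and $\bi(Q)^2\geq 2^{-M+1}$), and this propagates to an error of at most $6\cdot 2^{-K}\bi(Q)^2\diam(Q)$ in $\partial_1$. Then the preceding unnamed lemma (applied to the shifted triple, each of whose points is a start/end of a child) bounds $\partial_1(\gamma(t_1'),\gamma(t_2'),\gamma(t_3'))$ by the telescoping sum $\sum_{\eta\in\children(\tau)}d(s(\eta),f(\eta))-d(s(\tau),f(\tau))$. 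Taking the supremum over admissible triples, which by definition equals $\bti(\tau)^2\diam(\tau)$, yields the first assertion.

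For the second inequality, I plan to absorb the error term $6\cdot 2^{-K}\bi(Q)^2\diam(Q)$ into $\tfrac12\bti(\tau)^2\diam(\tau)$ using non-flatness. Since $B(Q)\in\cB_1$ and $\tau=\tau_Q$, we have $\bti(\tau)>\eb\bi(Q)$, so $\bi(Q)^2<\eb^{-2}\bti(\tau)^2$. Together with the inequality $\diam(Q)\leq 6\diam(\tau)$ (from the remark in Section 3.2 that $\diam(\tau)\geq r(B(Q))\geq \tfrac{1}{6}\diam(Q)$ for $\tau\in\Lambda(Q)$), this gives
\[
6\cdot 2^{-K}\bi(Q)^2\diam(Q)\;\leq\; 36\cdot 2^{-K}\eb^{-2}\bti(\tau)^2\diam(\tau).
\]
The standing assumption $2^{-K}\leq \eb^2/100$ makes the prefactor at most $36/100<\tfrac12$, so the error term is dominated by $\tfrac12\bti(\tau)^2\diam(\tau)$. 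Plugging this into the first inequality gives $\bti(\tau)^2\diam(\tau)\leq \bigl[\text{telescoping}\bigr]+\tfrac12\bti(\tau)^2\diam(\tau)$, and rearranging yields \eqref{e:telescoping-1} with the factor $2$.

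There is no real obstacle — the only thing to verify carefully is the bookkeeping tying the child-diameter bound $2^{-KM}\diam(\tau)$ to $2^{-K}\bi(Q)^2\diam(Q)$, which uses $M\geq 1$ together with $\bi(Q)^2\geq 2^{-M+1}$ and $\diam(\tau)\leq \diam(Q)$. Everything else is a direct combination of the triangle-inequality argument in the previous lemma with the a~priori lower bound on $\bti(\tau_Q)$ coming from membership in $\cB_1$, and with the relative-diameter comparison between $\tau$ and $Q$.
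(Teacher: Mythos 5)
Your proposal is correct and follows the paper's own argument: the first inequality is exactly the displayed computation preceding the lemma (shift to child endpoints, pay $6\cdot 2^{-K}\bi(Q)^2\diam(Q)$, then telescope via the triangle-inequality lemma), and the second is obtained by absorbing that error into $\tfrac12\bti(\tau)^2\diam(\tau)$ using $\bti(\tau_Q)>\eb\bi(Q)$, $\diam(Q)\leq 6\diam(\tau)$, and $2^{-K}\leq\eb^2/100$. Your constant bookkeeping ($36\cdot 2^{-K}\eb^{-2}\leq 36/100<1/2$) correctly fills in what the paper leaves implicit in the phrase ``by our choice of $K$ relative to $\eb$.''
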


Equation \eqref{e:telescoping-1} follows from the fact that 
$$ 62^{-K}\bi(Q)^2\diam(Q) < \frac{1}{2}\bti(\tau)^2\diam(\tau)$$
if $\tau=\tau_Q$, by our choice of $K$ relative to $\eb$ at the start of Section \ref{sec:beta}.

Equation \eqref{e:telescoping-1} will be useful for us as it telescopes well. We can now control the sum of $\bti^2(\tau_Q)\diam(\tau_Q)$ over a single family of non-flat cubes.

\begin{lemma}\label{lemma:onefamily}
For each fixed $M$ and $i$ we have
$$\sum\limits_{Q\in \cQ^{p_1}_{M,i} \atop B(Q)\in \cB_1} \bti(\tau_Q)^2\diam(\tau_Q) \leq 2\length(\gamma).$$
\end{lemma}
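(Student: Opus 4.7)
The plan is to combine the pointwise bound \eqref{e:telescoping-1} with a telescoping argument over the completed filtration $\widehat{\cF^{p_1}_{M,i}}$, exploiting the fact that the quantity appearing on the right hand side of \eqref{e:telescoping-1} is nonnegative for \emph{every} arc $\tau$ in the filtration (not just those of the form $\tau_Q$), by the triangle inequality applied to the partition $\children(\tau)$ of $\tau$.

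First, by Lemma \ref{l:Q-eta}, the assignment $Q \mapsto \tau_Q$ is injective on $\{Q \in \cQ^{p_1}_{M,i} : B(Q) \in \cB_1\}$, and each $\tau_Q$ lies in $\cF^{p_1}_{M,i} \subseteq \widehat{\cF^{p_1}_{M,i}}$. Since every $\tau \in \cF^{p_1}_{M,i}$ was assigned a nonempty collection of children in the completion, each $\tau_Q$ is a non-leaf of $\widehat{\cF^{p_1}_{M,i}}$. Applying \eqref{e:telescoping-1} to each $\tau_Q$ and then dropping the restriction $B(Q) \in \cB_1$ (using nonnegativity of each summand) gives
\begin{equation*}
\sum_{Q \in \cQ^{p_1}_{M,i} \atop B(Q) \in \cB_1} \bti(\tau_Q)^2 \diam(\tau_Q) \;\leq\; 2 \sum_{\tau \in \widehat{\cF^{p_1}_{M,i}} \text{ non-leaf}} \left(\sum_{\eta \in \children(\tau)} d(s(\eta),f(\eta)) - d(s(\tau),f(\tau))\right).
\end{equation*}

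Next, I would telescope the right hand side. In the forest $\widehat{\cF^{p_1}_{M,i}}$, each non-root non-leaf arc $\eta$ contributes once positively (as a child of its parent) and once negatively (as a parent of its own children), so these cancel. What remains is
\begin{equation*}
\sum_{\ell \text{ leaf}} d(s(\ell),f(\ell)) \;-\; \sum_{\tau_0 \text{ root}} d(s(\tau_0),f(\tau_0)),
\end{equation*}
which is bounded above by $\sum_{\ell \text{ leaf}} d(s(\ell),f(\ell))$.

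Finally, I would note that the leaves of $\widehat{\cF^{p_1}_{M,i}}$ partition (up to endpoints) the disjoint union of the maximal elements of $\cF^{p_1}_{M,i}$, whose pre-images in $\bT$ are pairwise disjoint subintervals. Since $\gamma$ is $1$-Lipschitz and parametrized by arc-length, $d(s(\ell),f(\ell)) \leq \length(\ell)$ for each leaf $\ell$, and summing gives at most $\length(\gamma)$. Putting these steps together yields the factor of $2$ and the desired bound. No step presents a real obstacle here; the only thing to keep track of is the bookkeeping showing that each $\tau_Q$ appears exactly once as a non-leaf in the filtration, which is handled by Lemma \ref{l:Q-eta} together with the construction of $\widehat{\cF^{p_1}_{M,i}}$.
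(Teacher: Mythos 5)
Your proof is correct and follows essentially the same route as the paper: rewrite the sum over cubes as a sum over arcs via Lemma \ref{l:Q-eta}, apply \eqref{e:telescoping-1}, enlarge to all of $\widehat{\cF^{p_1}_{M,i}}$ using nonnegativity of the summands, and telescope down to the length of the maximal elements. Your accounting of the telescoping (leaves minus roots) is just a more explicit version of what the paper states in one line.
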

\begin{proof}
Using Lemmas \ref{l:Q-eta} and \ref{lemma:btibound}, we write 
\begin{align*}
\sum\limits_{Q\in \cQ^{p_1}_{M,i} \atop B(Q)\in \cB_1} \bti(\tau_Q)^2\diam(\tau)&=
\sum_{\tau \in \cF^{p_1}_{M,i} \atop \tau=\tau_Q,\ B(Q)\in \cB_1}  
      \bti(\tau_Q)^2\diam(\tau)\\
&\leq 2\sum_{\tau \in \cF^{p_1}_{M,i} \atop \tau=\tau_Q,\ B(Q)\in \cB_1}  \left(
\sum\limits_{\eta\in \children(\tau)}d(s(\eta),f(\eta)) - d(s(\tau),f(\tau))
\right)\\
&\leq 2\sum_{\tau \in \widehat{\cF^{p_1}_{M,i}}}  
      \left(
\sum\limits_{\eta\in \children(\tau)}d(s(\eta),f(\eta)) - d(s(\tau),f(\tau))
\right)\\
&\leq 2\length(\gamma)
\end{align*}
as the last sum telescopes and the total length is controlled by the total length of the maximal elements of $\cF^{p_1}_{M,i}$, which is bounded by the length of $\gamma$.
\end{proof}

We now prove Proposition \ref{prop:nonflat}.

\begin{proof}[Proof of Proposition \ref{prop:nonflat}]
It suffices to show that 
$$\sum\limits_{B\in \cB_1} \bi(B)^p\diam(B) \lesssim \length(\gamma)$$
if $p>2$, where the implied constant depends only on $p$, $P_1$, $\eb$, and  $K$.

We write, using the definition of $\cB_1$ and Lemma \ref{lemma:onefamily}:
\begin{align*}
\sum\limits_{B\in\cB_1} \bi(B)^p\diam(B) &= \sum_{p_1=1}^{P_1}\sum_{M=1}^\infty \sum_{i=1}^{KM} \sum\limits_{B\in\cB_1 \atop Q(B) \in \cQ^{p_1}_{M,i}} \bi(B)^p\diam(B)\\
&\leq \sum_{M=1}^\infty 2P_1 KM 2^{-(p-2)(M-1)/2} \epsilon_\beta^{-2}\sum\limits_{Q\in \cQ^{p_1}_{M,i} \atop B(Q)\in \cB_1} \bti(\tau_Q)^2\diam(\tau_Q)\\
&\leq \sum_{M=1}^\infty 4P_1 KM 2^{-(p-2)(M-1)/2} \epsilon_\beta^{-2} \length(\gamma)\\
&\lesssim \length(\gamma),
\end{align*} 
\end{proof}

\section{Flat arcs and balls}\label{sec:flatarcsballs}
In this section, we prove Proposition \ref{prop:flat}.

\subsection{Statement of the key intermediate proposition}\label{sec:219}

For now, fix $p_1\in \{1, \dots, P_1\}$, $M\geq 0$, and $i\in\{1, \dots, KM\}$. Consider the associated collection of non-flat cubes
$$ \Delta = \Delta^{p_1}_{M,i} = \{Q \in \cQ^{p_1}_{M,i}: B(Q)\in \cB_2.\}$$
Recall that these are cubes for which all arcs in $\Lambda(Q)$ are flat.

In proving Proposition \ref{prop:flat}, the main step will be to show the following.

\begin{proposition}\label{prop:219}
There is an absolute constant $c>0$ with the following property. Let $B\in \cB_2 \cap \cB^{p_1}_{M,i}$ with $Q=Q(B)$ the associated cube in $\Delta$. Write
\begin{equation}\label{cubedecomposition}
 Q = R_{Q} \cup \bigcup Q^j
\end{equation}
where $Q^j$ are maximal subcubes of $Q$ in $\Delta$ and $R_{Q}$ is the remainder.

Then
\begin{equation}\label{done219}
\length(R_{Q}) + \sum \diam(Q^j) \geq (1+c\bi(B)^2)\diam(Q).
\end{equation}
\end{proposition}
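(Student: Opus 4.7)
The plan is to extract the required excess $c\bi(B)^2\diam(Q)$ from the triangle-inequality deficit among three points inside $B(Q)$, using the flatness hypothesis $B\in\cB_2$ to force the deficit to manifest as extra length rather than being hidden inside any single flat arc. I first pick $x, y, z\in \Gamma\cap B(Q)$ realizing $\del(x,y,z)\geq \tfrac{1}{2}\bi(B)^2 r(B)$, and relabel so that $d(x,y)\leq d(y,z)\leq d(x,z)$, giving
$$ d(x,y) + d(y,z) \geq d(x,z) + \tfrac{1}{2}\bi(B)^2 r(B). $$
I then rule out the possibility that $x, y, z$ all lie on a single arc $\tau\in\Lambda(Q)$: if they did, then by \eqref{e:bti} we would have $\del(x,y,z)\leq \bti(\tau)^2\diam(\tau) \leq C\eb^2\bi(B)^2 r(B)$ (using $\diam(\tau) \leq \diam(Q) \lesssim r(B)$), contradicting the lower bound once $\eb$ is small enough. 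So the three points are distributed across at least two distinct components of $\gamma^{-1}(Q)$.

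The core computation is then a length bound. I pick also two points $p, q\in Q$ with $d(p,q)\geq (1-o(1))\diam(Q)$, which exist because $Q$ is a cube in $\Gamma$ of diameter $\diam(Q)$. The $\gamma$-arc visiting $\{p, x, y, z, q\}$ in parametrization order has length at least $\diam(Q) + \tfrac{1}{2}\bi(B)^2 r(B)$, by the standard observation that for any points $v_1, \dots, v_n$ visited by $\gamma$ in that order one has
$$ \sum_{i=1}^{n-1} d(v_i, v_{i+1}) \geq d(v_1, v_n) + \del(v_j, v_k, v_l) $$
for any interior triple $v_j, v_k, v_l$. To convert this arc-length bound into a bound on $\length(R_Q) + \sum\diam(Q^j)$, I would replace each $\gamma$-visit to a subcube $Q^j$ by a chord of length at most $\diam(Q^j)$ and estimate the shortcutting error. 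Since subcubes in $\Delta^{p_1}_{M,i}$ have radii at most $2^{-KM}r(Q)$, and the points $x, y, z\in B(Q)\subseteq \tfrac{1}{2}Q$ can be moved to nearby points in $R_Q$ at negligible cost in $\del$, the total shortcutting error is much smaller than the target excess $\bi(B)^2\diam(Q)\sim 2^{-M}\diam(Q)$, thanks to our choice $2^{-K}\leq \eb^2/100$.

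The main obstacle is the shortcutting analysis in the last step: one must carefully verify that the excess $\tfrac{1}{2}\bi(B)^2 r(B)$ remains visible in the effective quantity $\length(R_Q) + \sum\diam(Q^j)$, rather than being erased by the shortcuts, and that it combines additively with the baseline $\diam(Q)$ rather than being absorbed into it. The scale separation $2^{KM}$ from Lemma \ref{lemma:cubes} and the smallness of $2^{-K}$ relative to $\eb^2$ are exactly what guarantee this additive combination is possible.
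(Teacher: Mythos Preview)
Your approach differs from the paper's, and the step you flag as ``the main obstacle'' is a genuine gap that your sketch does not close.

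The paper does not try to account for the baseline $\diam(Q)$ and the excess $c\bi(B)^2\diam(Q)$ along a single polygonal path. Instead it fixes one arc $\xi\in\Lambda(Q)$ through the center of $B(Q)$ and proves two separate facts: (i) the endpoints of $\xi$ lie on $\partial Q$ and pass through the center, so $\xi$ alone contributes at least $(1-\epsilon_3\bi(B)^2)\diam(Q)$ to the quantity $\length(R_Q\cap\xi)+\sum_{Q^j\cap\xi\neq\emptyset}\diam(Q^j)$; and (ii) since $\xi$ is flat but $\bi(B)$ is not small, there must exist a point of $B(Q)$ at distance $\gtrsim\eb^2\bi(B)^2\diam(Q)$ from $\xi$, and the arc $\eta$ through that point is \emph{disjoint from} $\xi$ and contributes at least $C_1\eb^2\bi(B)^2\diam(Q)$ to $\length(R_Q\setminus\xi)+\sum_{Q^j\cap\xi=\emptyset}\diam(Q^j)$. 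The disjointness makes the two contributions genuinely additive in the target quantity, with no shortcutting bookkeeping needed.

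Your argument does not achieve this separation. You produce a chord-sum inequality $\sum_{i}d(v_i,v_{i+1})\geq d(v_1,v_n)+\del(x,y,z)$ along a single parametric run, but two things go wrong. First, you have no mechanism forcing $v_1,v_n$ to be the diameter-realizing pair $p,q$; the parametric endpoints among $\{p,x,y,z,q\}$ could be any of the five. Second, and more seriously, the chord sum is simply not bounded above by $\length(R_Q)+\sum_j\diam(Q^j)$: the latter counts each subcube $Q^j$ exactly once, while your path may enter the same $Q^j$ many times, each visit contributing up to $\diam(Q^j)$ to the chord sum. Your proposed fix of ``moving $x,y,z$ to nearby points in $R_Q$'' does not help here, since the multiple-visit problem concerns the structure of the path between the five points, not the points themselves. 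The observation that $x,y,z$ lie on at least two arcs of $\Lambda(Q)$ is correct and is morally the same phenomenon the paper exploits, but you never cash it in; it is precisely what the paper uses to locate the disjoint arc $\eta$.
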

Here $c$ is fixed after choosing $\eb$. The relationship between $K$ and $\eb$ is also used.

Once we establish Proposition \ref{prop:219}, Proposition \ref{prop:flat} will follow by a martingale argument similar to those in \cite{Sc07_metric, Sc07_hilbert}. This will be done in subsection \ref{sec:propflatproof}.

\subsection{Proof of Proposition \ref{prop:219}}\label{sec:prop219proof}
We continue to use the same notation and assumptions as fixed at the start of subsection \ref{sec:219}: $B\in \cB_2 \cap \cB^{p_1}_{M,i}$, $Q=Q(B)\in \Delta$.

Let $\xi\in\Lambda(B)$ be an arc passing through the center of $B$. 

As in Proposition \ref{prop:219}, we write
$$ Q = R_{Q} \cup \bigcup Q^j, $$
where $Q^j$ are maximal subcubes of $Q$ in $\Delta$ and $R_{Q}$ is the remainder.

Our goal in this subsection is to prove Proposition \ref{prop:219} for $B$.

We will do this by way of the following two lemmas:
\begin{lemma}\label{lemma:220}
For all $C_1\geq 1$, if $\eb$ satisfies $\eb^{-2}>60C_1$, then the following holds:
\begin{equation}\label{eq:lemma220}
\length(R_{Q} \setminus \xi) + \sum_{j: Q^j \cap \xi = \emptyset} \diam(Q^j) \geq C_1 \eb^2\bi(B)^2\diam(Q)
\end{equation}
\end{lemma}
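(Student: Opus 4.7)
The plan is to pick a triple $x, y, z \in \Gamma \cap B$ with near-maximal triangle defect, show that at least one of them must lie off the distinguished arc $\xi$, and then translate the resulting positive distance to $\xi$ into the left-hand side of \eqref{eq:lemma220}.

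First I would choose $x, y, z \in \Gamma \cap B$ with $\partial(x, y, z) \geq \tfrac{1}{2}\bi(B)^2 \rad(B)$, using the definition of $\bi$. Let $\tilde\xi \in \Lambda(Q)$ be the natural enclosing arc of $\xi$ (the restriction of $\gamma$ to the connected component of $\gamma^{-1}(Q)$ containing the parameter of $\xi$; this passes through the center of $B$, hence lies in $\Lambda(Q)$). Since $B \in \cB_2$, the flatness bound gives $\bti(\tilde\xi) \leq \eb \bi(B)$. If $x$, $y$, $z$ all lay on $\tilde\xi$, then $\partial(x, y, z) \leq \bti(\tilde\xi)^2 \diam(\tilde\xi) \lesssim \eb^2 \bi(B)^2 \rad(B)$, contradicting the lower bound once $\eb$ is chosen small. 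Hence some point, say $z$, lies off $\xi$.

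Next I would convert ``$z \notin \xi$'' into a quantitative lower bound on $\dist(z, \xi)$. Assuming for concreteness that $x, y \in \tilde\xi$ (with a symmetric argument if instead $x$ or $y$ is the off-$\xi$ point), for any $w \in \tilde\xi$ in the parameter range between $x$ and $y$, flatness of $\tilde\xi$ yields $d(x, w) + d(w, y) \leq d(x, y) + \bti(\tilde\xi)^2 \diam(\tilde\xi)$. Combining with the triangle inequalities $d(x, z) \leq d(x, w) + d(w, z)$ and $d(y, z) \leq d(y, w) + d(w, z)$ gives
\begin{equation*}
  2 d(w, z) \;\geq\; \partial(x, y, z) \;-\; \bti(\tilde\xi)^2 \diam(\tilde\xi) \;\gtrsim\; \bi(B)^2 \rad(B)
\end{equation*}
for $\eb$ small. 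Since this lower bound holds for $w$ in a sizeable portion of $\tilde\xi$ (in particular an entire sub-arc intersecting $\xi$), I conclude $\dist(z, \xi) \gtrsim \bi(B)^2 \rad(B)$.

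Finally I would translate this metric lower bound into the LHS of \eqref{eq:lemma220}. Writing $z = \gamma(s)$ and taking $a^*$ to be the last parameter before $s$ with $\gamma(a^*) \in \xi$, the sub-arc $\gamma|_{[a^*, s]}$ stays off $\xi$, has length $\geq d(\gamma(a^*), z) \geq \dist(z, \xi)$, and decomposes into pieces inside $R_Q \setminus \xi$ and pieces inside subcubes $Q^j$ (all of which satisfy $Q^j \cap \xi = \emptyset$, since the sub-arc avoids $\xi$). A chaining argument along the sub-arc then gives $d(\gamma(a^*), z) \leq \length(R_Q \setminus \xi) + \sum_{j:\, Q^j \cap \xi = \emptyset} \diam(Q^j)$, yielding the required $\gtrsim \bi(B)^2 \rad(B) \geq C_1 \eb^2 \bi(B)^2 \diam(Q)$ under the hypothesis $\eb^{-2} \geq 60 C_1$.

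The main obstacle will be the bookkeeping in the final step: the sub-arc $\gamma|_{[a^*, s]}$ may enter and leave a single subcube $Q^j$ multiple times, so the chaining bound does not immediately collapse to a clean sum of diameters indexed once per cube; one must argue that the \emph{net displacement} contributed by $Q^j$ is $\lesssim \diam(Q^j)$ (for instance by passing to a reduced sub-arc whose trace in each $Q^j$ is a single connected piece), rather than counting each visit. A secondary subtlety is handling the case in which the off-$\xi$ point of the extremal triple is $x$ or $y$ rather than $z$, which requires a small symmetric substitution in the flatness-plus-triangle-inequality estimate.
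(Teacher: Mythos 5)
Your overall strategy matches the paper's: use the definition of $\bi(B)$ together with the flatness of $\xi$ (available because $B\in\cB_2$) to produce a point of $\Gamma\cap B$ quantitatively far from $\xi$, then convert that far point into length in $R_Q\setminus\xi$ plus diameters of subcubes disjoint from $\xi$. But there are two genuine gaps in the execution. First, Step 3 does not establish $\dist(z,\xi)\gtrsim\bi(B)^2\rad(B)$. The inequality $2d(w,z)\geq\del(x,y,z)-\bti(\tilde\xi)^2\diam(\tilde\xi)$ is derived only for $w$ in the sub-arc of $\tilde\xi$ between the parameters of $x$ and $y$, whereas $\dist(z,\xi)$ is an infimum over all of $\xi$; nothing rules out a point of $\xi$ outside that parameter range lying very close to $z$. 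Moreover, the reduction to ``$x,y\in\tilde\xi$'' is not a symmetric substitution: two or all three extremal points may fail to lie on $\tilde\xi$, and then the flatness inequality you invoke has no valid form. The paper's version of this step avoids both problems by arguing by contradiction: if \emph{every} point of $B$ were within $3C_1\eb^2\bi(B)^2\diam(Q)$ of $\xi$, then every triple in $B$ could be replaced by a nearby triple on $\xi$, forcing $\del(x_1,x_2,x_3)\leq \bti(\xi)^2\diam(\xi)+O(C_1\eb^2\bi(B)^2\diam(Q))\leq 60C_1\eb^2\bi(B)^2\diam(B)$ for all triples, contradicting the definition of $\bi(B)$ once $\eb^2<1/(60C_1)$.

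Second, and more seriously, Step 4 asserts that every subcube $Q^j$ met by $\gamma|_{[a^*,s]}$ satisfies $Q^j\cap\xi=\emptyset$ ``since the sub-arc avoids $\xi$.'' That is false: the sub-arc starts at a point of $\xi$, so near $a^*$ it is arbitrarily close to $\xi$, and a subcube containing that piece can intersect $\xi$ even though the sub-arc does not. Such cubes are not counted on the left of \eqref{eq:lemma220}, so your chaining bound does not land on the correct quantity. The quantitative distance bound is exactly what repairs this: since the far point is at distance $\geq 3C_1\eb^2\bi(B)^2\diam(Q)$ from $\xi$, and $C_1\eb^2\bi(B)^2\diam(Q)\geq 2\cdot 2^{-K}\diam(Q)\geq 2\diam(Q^j)$ for every $j$ (this is where the choice of $K$ relative to $\eb$ enters), one can extract a sub-arc $\eta$ of definite diameter staying at distance $\geq 2\sup_j\diam(Q^j)$ from $\xi$, so that any $Q^j$ meeting $\eta$ is automatically disjoint from $\xi$. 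Finally, the ``multiple visits'' bookkeeping you flag as the main obstacle evaporates if you bound $\diam(\eta)$ by subadditivity of the Hausdorff content $\HH^1_\infty$ over the cover of $\eta$ by $R_Q\setminus\xi$ and the cubes $Q^j$ it meets, each counted once, rather than chaining along the parametrization.
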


\begin{lemma}\label{lemma:221}
We have
\begin{equation}
\length(R_Q\cap \xi) + \sum_{j: Q^j \cap \xi \neq \emptyset} \diam(Q^j) \geq (1-\epsilon_3\bi(B)^2)\diam(Q)
\end{equation}
where
$$\epsilon_3 = 16\cdot 2^{-K} + \eb^2.$$
\end{lemma}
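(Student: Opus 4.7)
The plan is to show that $\xi$ has length close to $\diam(Q)$, then decompose $\xi$ into pieces lying in single cells and compare to the right-hand side. First I would show $\length(\xi)\geq(1-O(2^{-K}))\diam(Q)$. Since $\xi\in\Lambda(Q)$ passes through $c(B)$ and its endpoints $\gamma(a),\gamma(c)$ lie on $\partial Q$, the inclusion $2B\subseteq Q$ forces $d(\gamma(a),c(B)),d(\gamma(c),c(B))\geq 2r$: any point at distance $<2r$ from $c(B)$ lies in the interior of $Q$, contradicting that $\gamma$ exits $Q$ immediately past the endpoints of $\xi$. Meanwhile $Q\subseteq(1+4\cdot 2^{-KM})2B$ yields $\diam(Q)\leq 4r(1+4\cdot 2^{-KM})$. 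By the arc-length parametrization, $\length(\xi)\geq d(\gamma(a),c(B))+d(c(B),\gamma(c))\geq 4r\geq(1-O(2^{-K}))\diam(Q)$.

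Next, I would introduce breakpoints $p_0=\gamma(a),p_1,\dots,p_N=\gamma(c)$ on $\xi$ so that one of them, say $p_{k_0}$, equals $c(B)$, and so that each sub-arc $\xi_i=\gamma|_{[p_{i-1},p_i]}$ lies in a single cell---either in $R_Q$ or in a single maximal sub-cube $Q^{j(i)}\in\Delta$. The triangle inequality, grouped through $p_{k_0}$, gives $\sum_{i=1}^N d(p_{i-1},p_i)\geq d(\gamma(a),c(B))+d(c(B),\gamma(c))\geq(1-O(2^{-K}))\diam(Q)$. On the other hand, $d(p_{i-1},p_i)\leq \length(\xi_i)$ when $\xi_i\subseteq R_Q$, and $d(p_{i-1},p_i)\leq \diam(Q^{j(i)})$ when $\xi_i\subseteq Q^{j(i)}$, so
\[
(1-O(2^{-K}))\diam(Q)\leq \length(R_Q\cap\xi)+\sum_{i\colon \xi_i\subseteq Q^{j(i)}}\diam(Q^{j(i)}).
\]

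The main obstacle I anticipate is that the last sum counts $\diam(Q^j)$ once per visit of $\xi$ to $Q^j$, with multiplicity $n_j$, while the lemma's statement counts each visited sub-cube only once. I would bound the excess $\sum_j (n_j-1)\diam(Q^j)\leq \eb^2\bi(B)^2\diam(Q)$ using the flatness $\bti(\xi)\leq \eb\bi(B)$: each re-entry into $Q^j$ produces, for any intermediate point $p$ on $\xi$ outside $Q^j$, a triangle deficit
\[
\del_1(p_{\mathrm{out}},p,p_{\mathrm{in}})\leq \bti(\xi)^2\diam(\xi)\leq \eb^2\bi(B)^2\diam(Q),
\]
while $d(p_{\mathrm{out}},p_{\mathrm{in}})\leq \diam(Q^j)\leq 2^{-KM}\diam(Q)$, which by the choice $2^{-K}\leq \eb^2/100$ is much smaller than $\eb^2\bi(B)^2\diam(Q)$; thus excursions are confined to a thin neighborhood of $Q^j$. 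A telescoping-style accounting of these deficits over all re-entries and sub-cubes should yield the required bound on the excess, giving $\length(R_Q\cap\xi)+\sum_j \diam(Q^j)\geq(1-\epsilon_3\bi(B)^2)\diam(Q)$ with $\epsilon_3=16\cdot 2^{-K}+\eb^2$.
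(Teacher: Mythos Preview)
Your overall setup (showing $d(\gamma(a),c(B))+d(c(B),\gamma(c))\geq 4r\geq (1-O(2^{-KM}))\diam(Q)$) matches the paper's, but the rest of your argument diverges from the paper's and introduces a gap in your step 4.

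The paper's route is much shorter and sidesteps the multiplicity issue entirely. Writing $O_1=\gamma(a)$, $O=c(B)$, $O_2=\gamma(c)$, the paper applies the flatness hypothesis \emph{once}:
\[
d(O_1,O_2)=d(O_1,O)+d(O,O_2)-\del_1(O_1,O,O_2)\geq 4r-\bti(\xi)^2\diam(\xi)\geq \bigl(1-(16\cdot 2^{-K}+\eb^2)\bi(B)^2\bigr)\diam(Q),
\]
using $4\cdot 2^{-KM}\leq 16\cdot 2^{-K}\bi(B)^2$ and $\bti(\xi)\leq \eb\bi(B)$. The lemma then follows because the left-hand side is always $\geq d(O_1,O_2)$: walk along $\xi$ from $O_1$ to $O_2$, and whenever you first enter some $Q^j$, jump directly to the \emph{last} time $\xi$ is in $Q^j$. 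This produces a chain of points with consecutive distances bounded either by a piece of $\length(R_Q\cap\xi)$ or by $\diam(Q^j)$, and each $Q^j$ is used at most once. No multiplicity bookkeeping is needed.

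Your step 4, by contrast, attempts to bound the excess $\sum_j (n_j-1)\diam(Q^j)$ by $\eb^2\bi(B)^2\diam(Q)$ via a ``telescoping-style accounting'' of triangle deficits at re-entries. This is where the gap is: you correctly observe that each individual $\diam(Q^j)\leq 2^{-KM}\diam(Q)$ is tiny and that each re-entry sees a small $\del_1$, but you give no mechanism to control the \emph{number} of re-entries. The flatness bound $\del_1\leq \bti(\xi)^2\diam(\xi)$ is a uniform pointwise bound on triples, not a summed quantity, so it does not telescope over many re-entries in any obvious way. Without such control the total excess could in principle swamp $\eb^2\bi(B)^2\diam(Q)$. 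The clean fix is exactly the paper's: drop the many breakpoints, pass to $d(O_1,O_2)$ with one use of flatness, and use the last-exit chaining to avoid multiplicity altogether.
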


First we observe that Proposition \ref{prop:219} for $B$ follows from these two lemmas:
\begin{proof}[Proof of Proposition \ref{prop:219}]
Lemmas \ref{lemma:220} and \ref{lemma:221} combine to show that \eqref{done219} holds with
$$ c = C_1\eb^2 - 16 \cdot 2^{-K} - \eb^2.$$
We have chosen $K$ such that $2^{-MK}\leq 2^{-K} <\eb^2$, and $C_1$ can be chosen such that $C_1\geq 20$. This yields \eqref{done219} for the ball $B$ with $c\geq 4\eb^2$, which suffices to prove Proposition \ref{prop:219}.
\end{proof}

Now we work to prove Lemmas \ref{lemma:220} and \ref{lemma:221}.

\begin{proof}[Proof of Lemma \ref{lemma:220}]
To prove Lemma \ref{lemma:220}, it suffices to find a single point $x\in B$ such that
\begin{equation}\label{eq:xifar}
\dist(x,\xi) \geq 3C_1 \eb^2\bi(B)^2\diam(Q)
\end{equation}
Indeed, if $x$ satisfies \eqref{eq:xifar}, then there is an arc $\eta$ containing $x$ of diameter at least $C_1\eb\bi(B)^2\diam(Q)$ whose distance from $\xi$ is at least
$$ C_1 \eb^2\bi(B)^2\diam(Q) \geq 2\cdot 2^{-K} \diam(Q) \geq 2\diam(Q^j) \text{ for all } j. $$
It follows that \eqref{eq:lemma220} holds for $B$.

Suppose, therefore, that there was no such point $x\in B$. In that case, for any points $x_1,x_2,x_3\in B$, we can find $x_1',x_2',x_3'\in \xi$ such that
$$ d(x_i, x_i')\leq 3C_1 \eb\bi(B)^2\diam(Q) \text{ for } i=1,2,3.$$
Hence
\begin{align*}
\del(x_1, x_2, x_3) &\leq 9C_1 \eb^2\bi(B)^2\diam(Q) + \del(x_1',x_2',x_3')\\
&\leq  9C_1 \eb^2\bi(B)^2\diam(Q) + \bti(\xi)^2\diam(\xi)\\
&\leq  54C_1 \eb^2\bi(B)^2\diam(B) +  6\eb^2 \bi(B)^2\diam(B)\\
&\leq  60C_1 \eb^2 \bi(B)^2\diam(B).
\end{align*}
Since $x_i$ were arbitrary in $B$, it follows that
$$ \bi(B)^2\diam(B) \leq 60C_1\eb^2\bi(B)^2\diam(B),$$
which is a contradiction for $\eb^2< 1/(60C_1)$.
\end{proof}

\begin{proof}[Proof of Lemma \ref{lemma:221}]
By assumption, 
$$ 2^{-(M+1)}\leq \frac{1}{2}\bi(B)^2 \leq 2^{-M}.$$ 

Recall that
$$ \diam(Q) \leq (1+4\cdot 2^{-KM})\diam(2B).$$
Write $[a,c]= \domain(\xi)$, so that $O_1:=\gamma(a)$ and $O_2:=\gamma(c)$ are in $\partial Q$. (Note that $\gamma$ must both enter and exit $Q$, since $B(Q)\notin \cB_0$.) There is also $b\in (a,c)$ such that $O:=\gamma(b)$ is the center of $B$.

Hence,
\begin{align*}
d(O_1, O_2) &= d(O_1, O) + d(O,O_2) - \del_1(a,b,c)\\
&\geq 2\rad(2B) - \bti(\xi)^2\diam(\xi)\\
&\geq \diam(Q) - 4\cdot 2^{-KM}\diam(Q) - \eb^2 \bi(B)^2\diam(Q)\\
&= (1- 4 \cdot 2^{-KM} - \eb^2 \bi(B)^2)\diam(Q)\\
&\geq (1 - (8\cdot 2^{-(K-1)M} + \eb^2)\bi(B)^2) \diam(Q).
\end{align*}
We may therefore set
$$ \epsilon_3 = 16 \cdot 2^{-K} + \eb^2  \geq 8\cdot 2^{-(K-1)M} + \eb^2.$$

\end{proof}

\subsection{Proof of Proposition \ref{prop:flat}}\label{sec:propflatproof}
All that remains to prove Theorem \ref{thm:upperbound} is to prove Proposition \ref{prop:flat}.

We begin by summing over a fixed family $\Delta = \Delta^{p_1}_{M,i}$ as defined at the beginning of this section.

\begin{proposition}\label{prop:flatonefamily}
For each $p>2$, we have
$$ \sum_{B:Q(B) \in \Delta} \bi(B)^p \diam(B) \lesssim 2^{-\left(\frac{p}{2}-1\right)M} \length(\gamma)$$
where the implied constant depends only on $p$ and the constant $c$ from Proposition \ref{prop:219}.
\end{proposition}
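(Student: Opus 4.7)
The plan is to use Proposition \ref{prop:219} as the single-step improvement in a martingale/telescoping argument over the forest of cubes in $\Delta=\Delta^{p_1}_{M,i}$, harvesting a geometric-series gain of size $\sim 2^{M}$ over an arbitrarily deep tree of flat cubes. Since every $B$ with $Q(B)\in\Delta$ satisfies $\bi(B)^{2}\in[2^{-M},2^{-M+1})$, proving $\sum_{Q\in\Delta}\diam(Q)\lesssim 2^{M}\length(\gamma)$ is equivalent (up to the factor $\bi(B)^{p}\diam(B)\sim 2^{-Mp/2}\diam(Q)$ and the comparison $\diam(B)\sim\diam(Q)$) to the conclusion of Proposition \ref{prop:flatonefamily}.

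Concretely, organize $\Delta$ as a forest: the roots are the maximal elements of $\Delta$, and for $Q\in\Delta$ let $\Phi(Q)$ be the set of maximal proper $\Delta$-subcubes of $Q$, i.e.\ the $\{Q^{j}\}$ appearing in the decomposition \eqref{cubedecomposition}. Iterate this to define $\Phi^{0}(Q_{0})=\{Q_{0}\}$ and $\Phi^{n+1}(Q_{0})=\bigcup_{Q\in\Phi^{n}(Q_{0})}\Phi(Q)$ for a fixed root $Q_{0}$. Set
\[
D_{n}=\sum_{Q\in\Phi^{n}(Q_{0})}\diam(Q),\qquad L_{n}=\sum_{Q\in\Phi^{n}(Q_{0})}\length(R_{Q}),
\]
and $\length(Q)=|\gamma^{-1}(Q)|$ (with multiplicity, as in Section \ref{subsec:curve}). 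Since the preimages $\gamma^{-1}(R_{Q})$ and $\gamma^{-1}(Q')$ that enter \eqref{cubedecomposition} are pairwise disjoint in $\bT$, iterating the equality $\length(Q)=\length(R_{Q})+\sum_{j}\length(Q^{j})$ down through $N$ generations yields the identity $\length(Q_{0})=\sum_{n=0}^{N-1}L_{n}+\sum_{Q\in\Phi^{N}(Q_{0})}\length(Q)$, and in particular $\sum_{n\geq 0}L_{n}\leq\length(Q_{0})$ and $D_{N}\leq\length(Q_{0})$.

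The heart of the argument is to apply Proposition \ref{prop:219} to every $Q\in\Phi^{n}(Q_{0})$ and sum, using $\bi(B(Q))^{2}\geq 2^{-M+1}$, to obtain
\[
(1+2c\cdot 2^{-M})D_{n}\leq L_{n}+D_{n+1},
\]
or equivalently $2c\cdot 2^{-M}D_{n}\leq L_{n}+(D_{n+1}-D_{n})$. Summing this telescoping inequality from $n=0$ to $n=N$ yields
\[
2c\cdot 2^{-M}\sum_{n=0}^{N}D_{n}\leq \sum_{n=0}^{N}L_{n}+D_{N+1}-D_{0}\leq 2\length(Q_{0}),
\]
after which sending $N\to\infty$ gives $\sum_{Q\in\Delta,\,Q\subseteq Q_{0}}\diam(Q)\lesssim 2^{M}\length(Q_{0})/c$. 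The roots of the forest are disjoint subsets of $\Gamma$, so summing over them produces $\sum_{Q\in\Delta}\diam(Q)\lesssim 2^{M}\length(\gamma)/c$.

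To conclude, observe that for every $B$ with $Q(B)\in\Delta$ we have $\bi(B)^{p}\lesssim 2^{-Mp/2}$ and $\diam(B)\leq\diam(Q(B))\leq 3\diam(B)$, so
\[
\sum_{B:Q(B)\in\Delta}\bi(B)^{p}\diam(B)\lesssim 2^{-Mp/2}\sum_{Q\in\Delta}\diam(Q)\lesssim 2^{-(p/2-1)M}\length(\gamma),
\]
with constants depending only on $p$ and the constant $c$ from Proposition \ref{prop:219}. The one mild technical point to be careful with is that the telescoping sum would go the wrong way if $D_{N+1}$ were not dominated by something controllable; this is why we record the a priori bound $D_{N+1}\leq\length(Q_{0})$ coming from the measure-theoretic decomposition of $\gamma^{-1}(Q_{0})$ before passing to the limit. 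Once this is handled, Proposition \ref{prop:flat} (and hence Theorem \ref{thm:upperbound}) follows by summing the estimate of Proposition \ref{prop:flatonefamily} over $p_{1}\in\{1,\dots,P_{1}\}$, $M\in\mathbb{N}$, and $i\in\{1,\dots,KM\}$, as the resulting series $\sum_{M}KM\cdot 2^{-(p/2-1)M}$ converges for every $p>2$.
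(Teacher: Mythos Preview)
Your argument is correct and reaches the same conclusion as the paper, but by a somewhat different (and more direct) route. The paper constructs, for each $Q\in\Delta$, an explicit weight function $w_Q$ on $\Gamma$ by a martingale rule, then proves the pointwise bound $\sum_{Q\in\Delta}w_Q(x)\lesssim 2^{M}$ for a.e.\ $x$ and integrates. You bypass the weight functions entirely: you organize $\Delta$ as a forest, record the generation sums $D_n$ and $L_n$, apply Proposition~\ref{prop:219} to obtain $(1+2c\,2^{-M})D_n\leq L_n+D_{n+1}$, and telescope. Both arguments extract the same geometric series $\sum q^n\sim 2^{M}$ from the single-step gain $(1+c\bi^2)$, so they are equivalent in content; your version is a bit more elementary because it never needs a pointwise statement on $\Gamma$, while the paper's martingale formulation yields the extra information that the density $\sum_Q w_Q$ is bounded pointwise, which can be useful in other contexts.

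One small point to tidy up: you assert $D_{N}\leq\length(Q_0)$ directly from the measure decomposition, but what that decomposition actually gives is $\sum_{Q\in\Phi^{N}(Q_0)}\length(Q)\leq\length(Q_0)$. To pass from this to a bound on $D_N$ you need $\diam(Q)\lesssim\length(Q)$ for each $Q\in\Delta$, which holds (with an absolute constant) because $B(Q)\notin\cB_0$ forces $\gamma$ to cross $2B(Q)$, so $|\gamma^{-1}(Q)|\geq 4r(Q)\gtrsim\diam(Q)$. With that observation in place your telescoping bound $2c\,2^{-M}\sum_n D_n\lesssim\length(Q_0)$ goes through, and the rest of your write-up is fine.
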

\begin{proof}
To begin, suppose that $\Delta$ is a finite collection of cubes.

For each cube $Q\in\Delta$, we will construct a weight $w_Q:Q\rightarrow [0,\infty)$ satisfying three conditions:
\begin{enumerate}[(i)]
\item\label{weight1} $\int_Q w_Qd\ell\geq \diam(Q)$
\item\label{weight2} for almost every $x_0\in \Gamma$, 
	$\sum\limits_{Q \in \Delta} w_{Q}(x_0) < C 2^{M}$,\\
	where $C$ is a positive constant depending only on $c$
\item\label{weight3} $\text{supp} (w_Q) \subset Q$.\\
\end{enumerate}
We will construct $w_Q$ as a martingale.
We denote by $w_Q(Z):=\int_Z w_Qd\ell$.
Set
$$ w_Q(Q)=\diam(Q).$$
Assume now that $w_Q(Q')$ is defined.  We define $w_Q(Q'^i)$ and $w_Q(R_{Q'})$,
where
$$ Q'=(\cup  Q'^i) \cup R_{Q'}.$$
is a decomposition as given by equation \eqref{cubedecomposition}.

Take
$$ w_Q(R_{Q'})=\frac{w_Q( Q')}{s'} \length(R_{Q'}) $$
and
$$ w_Q( Q'^i)=\frac{w_Q(Q')}{s'}\diam(Q'^i),$$
where 
$$s'=\length(R_{Q'})+\sum_i \diam(Q'^i).$$
This will give us $w_Q$. Note that $s'\lesssim \length(\Gamma\cap Q')$.
Clearly \eqref{weight1} and \eqref{weight3} are satisfied.
To see \eqref{weight2}:
\begin{align*}
\frac{w_Q( Q'^{i^*})}{\diam(Q'^{i^*}) }
&=
\frac{w_Q( Q')}{s'}\\
&=
\frac{w_Q( Q')}{\diam(Q') }
\frac{\diam(Q' )}{s'}\\
&=
\frac{w_Q( Q')}{\diam(Q') }
\frac{\diam(Q') }
	{\length(R_{Q'}) + 
		\sum\limits_{i} \diam(Q'^i) }\\
&\leq
\frac{w_Q( Q')}{\diam(Q') }
\frac{1}
	{1+ c' 2^{-M}}\\
\end{align*}
for $c'$ depending only on $c$ (the ultimate inequality followed from Proposition \ref{prop:219}.

And so,
$$\frac{w_Q( Q'^{i^*})}{\diam(Q'^{i^*})} \leq 
 	q   \frac{w_Q( Q')}{\diam(Q')}
$$
with $q=\frac{1}{1+ c' 2^{-M}}$.

Now, suppose 
that  $x\in Q_N \subset ...\subset Q_1$.
we  get:
\begin{align*}
\frac{w_{Q_1}(Q_N)}{\diam(Q_N)} &\leq 
  q\frac{w_{Q_1}(Q_{N-1})}{\diam(Q_{N-1})} \\
  &\leq...\\
  &\leq
  q^{N-1}\frac{w_{Q_1}(Q_{1})}{\diam(Q_1)}=q^{N-1}.
\end{align*}

 Hence,  we have $w_{Q_1}(x) \lesssim q^{N-1}$.
This will give us (ii) as a sum of a geometric series,  since
$$\sum q^n = \frac{1}{1-q}\lesssim \frac{1}{2^{-M}}=2^{M}.$$

Now,
\begin{align*}
\sum_{B: Q(B) \in \Delta}\bi(B)^p\diam(B) 
&\lesssim 
2^{-(p/2)M}\sum_{B: Q(B) \in \Delta}\diam(B)\\
&\lesssim
2^{-(p/2)M}\sum_{B: Q(B) \in \Delta}\int w_{Q(B)}(x)d\ell(x)\\
&=
2^{-(p/2)M}\int \sum_{B: Q(B) \in \Delta} w_{Q(B)}(x)d\ell(x)\\
&\lesssim
2^{-(p/2)M}\int  2^{M} d\ell(x)\\
&\lesssim
2^{-\left(\frac{p}{2}-1\right)M}\length(\gamma).
\end{align*}
\end{proof}

Finally, we complete the proof of Proposition \ref{prop:flat} by summing over all $p_1, M,i$:

\begin{proof}[Proof of Proposition \ref{prop:flat}]
We have, for $p>2$,
\begin{align*}
\sum_{B\in \cB_2} \bi(B)^p \diam(B) &= \sum_{p_1=1}^{P_1} \sum_{M=1}^\infty \sum_{i=1}^{KM} \sum_{B:Q(B)\in \Delta^{p_1}_{M,i}} \bi(B)^p\diam(B)\\
&\leq \sum_{M=1}^\infty P_1 KM 2^{-\left(\frac{p}{2}-1\right)M} \length(\gamma)\\
&\lesssim \length(\gamma),
\end{align*}
where the implied constant depends only on $p$ and the doubling constant of $\Gamma$.

To conclude, the case where $\Delta$ is infinite is obtained as a limit, as our bounds do not depend on the cardinality of $\Delta$.
\end{proof}

\section{``Gromov-Hausdorff'' $\beta$ numbers}\label{sec:GHTST}

Recall the definition of $\alpha$ from subsection \ref{subsub:GH}. In this section, we will relate $\alpha$ to $\bi$ for metric curves, and obtain 
Corollary \ref{cor:GHTST} as a corollary of Theorem \ref{thm:upperbound}.

\subsection{Orders and preliminary lemmas}
To prove Corollary \ref{cor:GHTST}, a useful notion is that of an \textit{order} of a set in a metric space, as defined by Hahlomaa in \cite{Hahlomaa-non-AR}.

\begin{definition}\label{def:order}
Let $E$ be a subset of a metric space $M$. An \textit{order}  on $E$ is an injective map $o:E\rightarrow \RR$ such that
$$ o(x)<o(y)<o(z) \Rightarrow d(x,z) >  \max\{d(x,y),d(y,z)\}$$
for all  $x,y,z\in E$.
\end{definition}
Note that if $o$ is an order on $E$ and $x,y,z\in E$ satisfy $o(x)<o(y)<o(z)$, then
$$ \del(x,y,z) = \del_1(x,y,z).$$

A key fact about orders is the following lemma of Hahlomaa.

\begin{lemma}[Lemma 2.3 of \cite{Hahlomaa-non-AR}]\label{lem:Hahlomaa}
Let $K\geq 1$ and $\epsilon>0$. Suppose that $E$ is a metric space such that 
\begin{enumerate}[(i)]
\item\label{hah1} $d(x,y)\leq Kd(z,w)$ for all $x,y,z,w\in E$ with $z\neq w$,
\item\label{hah2} $ d(x,z) \geq d(x,y) + \epsilon d(y,z)$ whenever $x,y,z\in E$ are such that $d(x,z)=\diam(\{x,y,z\})$, 
\item\label{hah3} $\epsilon^3 \geq \frac{4K-1}{4K+1}$, and
\item\label{hah4} $\sharp E \neq 4$.
\end{enumerate}
Then $E$ has an order.
\end{lemma}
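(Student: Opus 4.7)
The plan is to induct on $n := \sharp E$. The base cases $n \leq 3$ are immediate: for $n \leq 2$ any injection $E \hookrightarrow \RR$ is trivially an order, and for $n = 3$ with $d(x,z) = \diam(\{x,y,z\})$, condition (ii) gives $d(x,z) \geq d(x,y) + \epsilon\, d(y,z)$, whence (since $\epsilon > 0$ and $d(y,z) > 0$) both $d(x,z) > d(x,y)$ and $d(x,z) > d(y,z)$, so placing $y$ strictly between $x$ and $z$ on the real line defines an order. The case $n=4$ is excluded by assumption (iv).

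For $n \geq 5$, I would first choose a pair of extremal points $p, q \in E$ with $d(p,q) = \diam(E)$; the aim is to build an order with $p$ minimal and $q$ maximal, and the remaining $n-2$ points strung in between. For each $x \in E \setminus \{p,q\}$, applying (ii) to the triple $\{p,x,q\}$ (whose diameter is $d(p,q)$) yields the sandwich inequalities
\[
d(p,q) \geq d(p,x) + \epsilon\, d(x,q), \qquad d(p,q) \geq d(q,x) + \epsilon\, d(x,p),
\]
expressing that each such $x$ lies quantitatively between $p$ and $q$. I would then declare $o(p) = -1$, $o(q) = \diam(E) + 1$, and $o(x) = d(p,x)$ otherwise, and verify injectivity and the order property.

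Verification of the order property on triples $o(a) < o(b) < o(c)$ reduces to a case analysis according to how many of $a, b, c$ coincide with $p$ or $q$. Each case is handled by applying (ii) to auxiliary triples built from $\{a,b,c\}$ together with $p$ or $q$, and then using (i) to bound ratios among the resulting distances; the cubic strengthening $\epsilon^3 \geq (4K-1)/(4K+1)$ in (iii) is precisely the slack that lets three successive applications of (ii) compound into the net strict inequality $d(a,c) > \max\{d(a,b), d(b,c)\}$, with the factor $K$ absorbing the comparability loss from condition (i).

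The main obstacle, and the reason condition (iv) is needed, is tie-breaking when two intermediate points $x, y$ happen to satisfy $d(p,x) = d(p,y)$, or are so close in $d(p,\cdot)$ that the margin from (iii) is insufficient to place one strictly before the other. When $n \geq 5$ there is always at least one further point $z \in E$ whose distances to $x$ and $y$ can be used to resolve the ambiguity; for $n = 4$ no such witness is available and configurations approximating a regular tetrahedron genuinely fail to admit any order, which is what (iv) rules out. I expect this tie-breaking sub-step to be the technical heart of the argument, likely requiring an auxiliary lemma or a short internal induction, while the overall structure remains an induction on $n$ with $n = 5$ handled directly and the inductive step for $n \geq 6$ carried out by removing a point $w$ chosen so that $\sharp(E \setminus \{w\}) \neq 4$.
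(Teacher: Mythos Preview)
The paper does not prove this lemma; it is quoted from Hahlomaa \cite{Hahlomaa-non-AR} and used as a black box, so there is no in-paper argument to compare your proposal against.

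As a standalone attempt, your outline is plausible in shape but is a plan rather than a proof, and the two hard steps are left unexecuted. First, you assert but do not verify that $d(p,a)<d(p,b)<d(p,c)$ forces $d(a,c)>\max\{d(a,b),d(b,c)\}$ for a triple in $E\setminus\{p,q\}$; this is precisely where the numerical hypothesis (iii) must enter, and the chain of inequalities has to be written out explicitly to confirm that $\epsilon^3\ge(4K-1)/(4K+1)$ is sufficient. In particular it is not even clear from what you have written that the diameter of $\{a,b,c\}$ is realized by $d(a,c)$, which is needed before (ii) can be invoked on that triple at all. Second, the tie-breaking issue you correctly flag is real and unresolved: you note that extra witness points are available when $n\ge 5$ but do not explain how to use them, and your closing suggestion of removing a point and inducting is a separate mechanism that you have not reconciled with the direct construction $o(x)=d(p,x)$. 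If you induct by deletion, you must then show how to re-insert the removed point into the inherited order, which is its own nontrivial argument and not addressed here.
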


We will now use Lemma \ref{lem:Hahlomaa} to show that nets in balls with sufficiently small $\bi$ can be ordered. Two versions of this result will be useful.

\begin{lemma}\label{lem:order}
Let $\Gamma$ be a compact connected set in a metric space $M$, and let $B=B(p,A \cdot 2^{-n})$ be a ball centered on $\Gamma$. Assume that $A\geq 10$ and
$$ \bi(B)^2 \leq (24A^2(16A+1))^{-1} \leq 1/40.$$

\begin{enumerate}[(i)]
\item If $E$ is a $\eta\diam(B)$-net for $B\cap \Gamma$, for $\eta=2\bi(B)$, then $E$ has an order.
\item If $\diam(B)<\frac{1}{10}\diam(\Gamma)$, $X_{n+1}$ is a $2^{-(n+1)}$-net for $\Gamma$, and $N=X_{n+1}\cap B$, then $N$ has an order.
\end{enumerate}
\end{lemma}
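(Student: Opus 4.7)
The plan is to apply Hahlomaa's lemma (Lemma \ref{lem:Hahlomaa}) in each case by verifying its four hypotheses. The common ingredient will be a consequence of the definition of $\bi$: for any triple $x,y,z \in B \cap \Gamma$ with $d(x,z) = \diam(\{x,y,z\})$, one has $\del_1(x,y,z) = \del(x,y,z) \leq \bi(B)^2 r$, which rearranges to
\[
d(x,z) \geq d(x,y) + \left(1 - \frac{\bi(B)^2 r}{d(y,z)}\right) d(y,z).
\]
Thus Hahlomaa's strengthened triangle inequality \eqref{hah2} will follow from any lower bound on pairwise distances in the set, with a corresponding value of $\epsilon$. In each part, the argument then reduces to computing $K$ from the ratio of the maximum pairwise distance (at most $\diam(B)$) to the net separation, reading off the associated $\epsilon$, and verifying $\epsilon^3 \geq (4K-1)/(4K+1)$.

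I would begin with part (ii), which is the cleaner case. The set $N = X_{n+1} \cap B$ has separation exceeding $2^{-(n+1)} = r/(2A)$ and pairwise distances at most $\diam(B) \leq 2r$, so Hahlomaa's bounded-ratio condition \eqref{hah1} holds with $K = 4A$, and the display above gives $\epsilon = 1 - 2A\bi(B)^2$. Using Bernoulli's inequality $(1-\delta)^3 \geq 1-3\delta$, the condition $\epsilon^3 \geq (4K-1)/(4K+1) = 1 - 2/(16A+1)$ reduces to $\bi(B)^2 \leq 1/(3A(16A+1))$, which is implied with considerable room to spare by the hypothesis $\bi(B)^2 \leq (24A^2(16A+1))^{-1}$. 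To guarantee that $N$ is non-trivial, I would use the assumption $\diam(B) < \tfrac{1}{10}\diam(\Gamma)$ together with connectedness of $\Gamma$: the continuity of $d(p,\cdot)|_\Gamma$ forces some point of $\Gamma$ onto $\partial B$, giving $\diam(B \cap \Gamma) \geq r$.

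Part (i) follows the same template. We may assume $\#E \geq 2$, since otherwise the lemma is trivial. The net separation $> 2\bi(B)\diam(B)$ yields $K \leq 1/(2\bi(B))$ and $\epsilon \geq 1 - \bi(B) r/(2\diam(B))$; a direct algebraic check against $(4K-1)/(4K+1) = (2-\bi(B))/(2+\bi(B))$ using the hypothesis $\bi(B)^2 \leq 1/40$ then verifies condition \eqref{hah3}, once one establishes that $\diam(B)$ is comparable to $r$. The latter comparison comes again from connectedness of $\Gamma$ when $\Gamma \not\subseteq B$; the remaining case $\Gamma \subseteq B$ is essentially degenerate and can be handled by a direct argument.

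Hahlomaa's final condition \eqref{hah4} ($\#E \neq 4$) is a technicality I would dispose of by hand: if $\#E = 4$, pick the two points $x_1,x_4$ realizing the maximum pairwise distance and order the remaining two by their distance from $x_1$; the strengthened triangle inequality already established ensures this is a valid order. The main obstacle in the proof will be the tight quantitative bookkeeping in verifying \eqref{hah3}, particularly in case (i) where $K$ depends on $\bi(B)$ and the trade-off with $\epsilon$ is delicate; the explicit $A$-dependence in the hypothesis $\bi(B)^2 \leq (24A^2(16A+1))^{-1}$ is what absorbs the linear scaling of $K$ with $A$ in case (ii).
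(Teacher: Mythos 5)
Your overall strategy---verifying the four hypotheses of Hahlomaa's Lemma \ref{lem:Hahlomaa} with $K$ coming from the separation-to-diameter ratio and $\epsilon$ from the deficit bound $\del_1\leq\bi(B)^2\rad(B)$---is exactly the paper's, and your treatment of conditions \eqref{hah1}--\eqref{hah3} in part (ii) matches the paper's computation (the paper takes $\epsilon=1-4A\bi(B)^2$ rather than your $1-2A\bi(B)^2$, an immaterial difference).

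The genuine gap is your disposal of condition \eqref{hah4}. The exclusion $\sharp E\neq 4$ in Hahlomaa's lemma is not a removable technicality, and your proposed by-hand argument for $\sharp E=4$ is false: the strengthened triangle inequality among the four points does not suffice to produce an order. Concretely, take the vertex set $\{a,b,c,d\}$ of a unit-edge $4$-cycle with its path metric, so $d(a,b)=d(b,c)=d(c,d)=d(d,a)=1$ and $d(a,c)=d(b,d)=2$. Every triple is isometric to $\{0,1,2\}\subset\RR$, so $\del_1\equiv 0$ on ordered triples and conditions \eqref{hah1}--\eqref{hah3} hold with $K=2$, $\epsilon=1$; moreover your recipe (put a diameter-realizing pair, say $a,c$, at the ends and sort $b,d$ by distance to $a$) produces a candidate order, yet no order exists, because the triples force $b$ to lie between $a$ and $c$, $c$ between $b$ and $d$, $d$ between $c$ and $a$, and $a$ between $d$ and $b$, a cyclic system of betweenness relations that no injection into $\RR$ can realize. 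The paper instead \emph{rules out} $\sharp E=4$ by a cardinality bound: since $\Gamma$ is connected and (by the hypotheses on $B$) not contained in $B$, one has $\HH^1_\infty(B\cap\Gamma)\gtrsim\rad(B)$, while $E$ covers $B\cap\Gamma$ by balls whose radius equals the net separation; comparing the two gives $\sharp E\geq 5$ (this is where the full strength of $\bi(B)^2\leq(24A^2(16A+1))^{-1}$, not merely $\leq 1/40$, is used in part (i)). You already have the needed ingredient---you invoke connectedness to get $\diam(B\cap\Gamma)\geq\rad(B)$---but you must push it to a lower bound on $\sharp E$ rather than attempt the four-point case directly. A secondary caution: in part (i) the ``direct algebraic check'' of \eqref{hah3} is tighter than you suggest, since with $K=1/(2\bi(B))$ and $\epsilon=1-\bi(B)/2$ one has $\epsilon^3\approx 1-\tfrac32\bi(B)$ against $(4K-1)/(4K+1)=(2-\bi(B))/(2+\bi(B))\approx 1-\bi(B)$, so the inequality does not follow from Bernoulli alone and the constants must be tracked with care.
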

\begin{proof}
We begin with (i). Let 
$$ \eta = 2\bi(B),$$
and let $E$ be a maximal $\eta\diam(B)$-net in $B$. We first work to show that $E$ satisfies the assumptions \ref{hah1}, \ref{hah2}, \ref{hah3}, and \ref{hah4} of Lemma \ref{lem:Hahlomaa}, with appropriate choice of $K$ and $\epsilon$. We verify these assumptions in equations \eqref{eq:hah1}, \eqref{eq:hah2}, \eqref{eq:hah3}, and \eqref{eq:hah4} below.

It follows immediately from the definition of $E$ that if we set $K=1/\eta$, then
\begin{equation}\label{eq:hah1}
 d(x,y) \leq Kd(z,w) \text{ for all } x,y,z,w\in E \text{ with } z\neq w.
\end{equation}
Moreover, if $\{x,y,z\}\subseteq E$ is such that $d(x,z) = \diam\{x,y,z\}$, then
\begin{align*}
d(x,z) &= d(x,y) + d(y,z) - \partial_1(x,y,z)\\
&\geq d(x,y) + d(y,z) - \bi(B)^2\diam(B)\\
&= d(x,y) + d(y,z) - \left(\frac{\eta}{4}\right)\eta\diam(B)\\
&= d(x,y) + d(y,z) - \left(\frac{\eta}{4}\right)d(y,z)\\
&\geq d(x,y) + (1-\frac{\eta}{4})d(y,z).
\end{align*}
Setting $ \epsilon = 1 -\frac{\eta}{4}$, we get
\begin{equation}\label{eq:hah2}
d(x,z) \geq d(x,y) + \epsilon d(y,z).
\end{equation}

Note that 
\begin{equation}\label{eq:hah3}
\epsilon^3 =  \left(1 -\frac{\eta}{4}\right)^3 \geq 1 - \frac{3\eta}{4} \geq 1 - \frac{2\eta}{1+\eta} =  \frac{4K-1}{4K+1}.
\end{equation}

Lastly, since $\eta\leq 1/20$, the number of points $m$ in $E$ must satisfy
$$ \frac{1}{2}\diam(B) \leq \HH^1_\infty(B) \leq m\cdot2\eta\diam(B) \Rightarrow m\geq 5.$$
(Here $\HH^1_\infty$ denotes the Hausdorff content.) In other words, 
\begin{equation}\label{eq:hah4}
\sharp E \geq 5.
\end{equation}

The four equations \eqref{eq:hah1}, \eqref{eq:hah2}, \eqref{eq:hah3}, and \eqref{eq:hah4} verify the assumptions of Lemma \ref{lem:Hahlomaa}, and hence there is an order on $E$.

The proof of statement (ii) in this lemma is quite similar:

As above, we first show that $N$ satisfies the assumptions of Lemma \ref{lem:Hahlomaa}, with appropriate choice of $K$ and $\epsilon$. These assumptions are equations \eqref{eq:hah1'}, \eqref{eq:hah2'}, \eqref{eq:hah3'}, and \eqref{eq:hah4'} below.

If we set $K=4A$, then
\begin{equation}\label{eq:hah1'}
 d(x,y) \leq Kd(z,w) \text{ for all } x,y,z,w\in N \text{ with } z\neq w.
\end{equation}
Moreover, if $\{x,y,z\}\subseteq N$ is such that $d(x,z) = \diam\{x,y,z\}$, then
\begin{align*}
d(x,z) &= d(x,y) + d(y,z) - \partial_1(x,y,z)\\
&\geq d(x,y) + d(y,z) - \bi(B)^2\diam(B)\\
&= d(x,y) + d(y,z) - 4A\bi(B)^2 d(y,z)\\
&\geq d(x,y) + (1-4A\bi(B)^2)d(y,z).
\end{align*}
Setting $ \epsilon = 1-4A\bi(B)^2$, we get
\begin{equation}\label{eq:hah2'}
d(x,z) \geq d(x,y) + \epsilon d(y,z).
\end{equation}

Note that 
\begin{equation}\label{eq:hah3'}
\epsilon^3 \geq  1-12A\bi(B)^2 \geq 1 - \frac{4A}{24A^2(16A+1)} \geq \frac{4K-1}{4K+1}.
\end{equation}

Lastly, since $A\geq 10$ and $B$ does not contain all of $\Gamma$, the number of points $m$ in $N$ must satisfy
\begin{equation}\label{eq:hah4'}
A\cdot 2^{-n} \leq \HH^1_\infty(B) \leq m\cdot 2^{-(n+1)} \Rightarrow m\geq 2A \geq 5.
\end{equation}

The four equations \eqref{eq:hah1'}, \eqref{eq:hah2'}, \eqref{eq:hah3'}, and \eqref{eq:hah4'} verify the assumptions of Lemma  \ref{lem:Hahlomaa}, and hence $N$ has an order.
\end{proof}

\subsection{Proof of Corollary \ref{cor:GHTST}}

We now relate our notions of $\bi$ and $\epsilon(I)$:
\begin{lemma}\label{lemma:betacompare1}
For any curve $\Gamma$ and any ball $B=B(z,r)\subseteq \Gamma$, there is a $1$-Lipschitz mapping $I:B\rightarrow (-r,r)$ such that
$$ \epsilon(I) \leq C\bi(B)\diam(B),$$
where $C$ is an absolute constant.
\end{lemma}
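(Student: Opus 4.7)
The plan is to build the map $I$ explicitly: first on an ordered net in $B\cap\Gamma$ supplied by Lemma \ref{lem:order}(i), then extended to all of $B$ by the McShane formula, and finally rescaled by a factor close to $1$ so that its image is strictly contained in $(-r,r)$. The trivial case $\bi(B) > 1/40$ is dispatched by taking $I\equiv 0$: then $\epsilon(I)\leq \diam(B) \leq 40\,\bi(B)\diam(B)$.

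In the main case $\bi(B) \leq 1/40$, Lemma \ref{lem:order}(i) yields an order $o$ on a maximal $\eta\diam(B)$-separated net $E \subseteq B \cap \Gamma$, where $\eta = 2\bi(B)$. Enumerating $E = \{x_0 <_o x_1 <_o \dots <_o x_N\}$, I would define
\[ I_0(x_i) = d(x_0, x_i) - \tfrac12 d(x_0, x_N). \]
The triangle inequality makes $I_0$ a $1$-Lipschitz function on $E$ with range in $[-\tfrac12 d(x_0,x_N), \tfrac12 d(x_0,x_N)] \subseteq [-r,r]$. The key metric-distortion estimate is: by the defining property of an order, whenever $i<j$ the side $d(x_0, x_j)$ is the longest in the triple $\{x_0, x_i, x_j\}$, so
\[ \bigl| |I_0(x_i) - I_0(x_j)| - d(x_i, x_j) \bigr| = \del_1(x_0, x_i, x_j) = \del(x_0, x_i, x_j) \leq \bi(B)^2\,\rad(B). \]

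Next, I would extend $I_0$ to $\tilde I: B \to \RR$ via the McShane formula
\[ \tilde I(y) = \inf_{x\in E}\bigl( I_0(x) + d(x, y) \bigr), \]
which is automatically $1$-Lipschitz on $B$ and agrees with $I_0$ on $E$. For any $y\in B$, a nearest net point $x^*(y)\in E$ satisfies $d(y,x^*(y))\leq \eta\diam(B)$, giving $|\tilde I(y) - I_0(x^*(y))| \leq \eta\diam(B)$. Applying a three-term triangle inequality --- replacing each $\tilde I(y_k)$ by $I_0(x^*(y_k))$ (two extension errors of size $\eta\diam(B)$), then using the order-based distortion, then replacing $d(x^*(y_1), x^*(y_2))$ by $d(y_1, y_2)$ (a net error of $2\eta\diam(B)$) --- yields
\[ \bigl| |\tilde I(y_1) - \tilde I(y_2)| - d(y_1, y_2) \bigr| \leq 4\eta\diam(B) + \bi(B)^2\,\rad(B) \lesssim \bi(B)\diam(B). \]

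The remaining step, which is the main technical nuisance, is that the McShane extension only forces the range into $[-r,\,r + \eta\diam(B)]$ rather than strictly inside $(-r, r)$. I would resolve this by composing with an affine map $I = a\tilde I + b$ with $a = 1 - O(\bi(B))$ and a small shift $b = O(\bi(B)\diam(B))$, chosen to squeeze the image into $[-r+\bi(B)\diam(B),\, r-\bi(B)\diam(B)] \subset (-r,r)$. Since $a \leq 1$, the composition is still $1$-Lipschitz, and since $|a-1| \cdot \diam(\tilde I(B)) \lesssim \bi(B)\diam(B)$, the rescaling perturbs the defect by at most $O(\bi(B)\diam(B))$, preserving $\epsilon(I) \leq C\bi(B)\diam(B)$ as required.
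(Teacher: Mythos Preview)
Your proof is correct and takes essentially the same route as the paper: both build $I$ as $x\mapsto d(x_0,x)$ (up to an additive constant) from the order-minimal net point, use the order so that $\del_1(x_0,x_i,x_j)=\del(x_0,x_i,x_j)\le\bi(B)^2\rad(B)$ on net points, and then pass to all of $B$ via the $\eta\diam(B)$-density of $E$. Two simplifications the paper makes are worth noting: since $d(x_0,\cdot)$ is already globally defined and $1$-Lipschitz on $B$, no McShane extension is needed (indeed your McShane formula, evaluated at $x=x_0$, recovers exactly $d(x_0,\cdot)$ minus your constant); and instead of rescaling, the paper simply translates so that the \emph{center} $z$ of $B$ maps to $0$, after which $|I(y)|\le d(y,z)<r$ for $y$ in the open ball $B(z,r)$, landing the image in $(-r,r)$ for free.
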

\begin{proof}
We may assume, by taking $C$ sufficiently large, that $\bi(B)\leq (24A^2(16A+1))^{-1}$.

Let $E$ be an $\eta\diam(B)$-net in $B$, where $\eta = 2\bi(B)$. Lemma \ref{lem:order}(i) grants us an order $o$ on $E$. We order the set $E$ according to $o$ and write
$$ E = \{x_1, x_2, \dots, x_n\}$$
with $o(x_i)<o(x_j)$ for all $1\leq i < j \leq n$.

Let $f:B\rightarrow \RR$ be the map
$$ f(x) = d(x_1,x).$$
Note that $f$ is $1$-Lipschitz and therefore maps $B$ into a closed interval of length at most $2r$.

We now claim that $f$ satisfies the condition
\begin{equation}\label{eq:orderE}
||f(x)-f(y)| - d(x,y)|\leq \bi(B)^2\diam(B)
\end{equation}
for all $x,y\in E$. Since $f$ is $1$-Lipschitz, 
$$ |f(x) - f(y)| \leq d(x,y) \text{ for all } x,y\in E.$$
On the other hand, consider $x_i$ and $x_j$ in $E$ with $i<j$. Using the order, we see immediately that $f(x_j) - f(x_i) \geq 0$. In addition,
\begin{align*}
f(x_j) - f(x_i) &= d(x_1, x_j) - d(x_1, x_i)\\
&= d(x_i, x_j) - \partial(x_1, x_i, x_j)\\
&\geq d(x_i, x_j) - \bi(B)^2\diam(B).
\end{align*}
which completes the proof of \eqref{eq:orderE}.

Now consider arbitrary points $x',y'\in B$, not necessarily in $E$. Let $x$ and $y$ be corresponding points of $E$ within distance $\eta\diam(B)$ of $x'$ and $y'$, respectively.

It follows that
\begin{align*}
||f(x') - f(y')| - d(x',y')| &\leq 4\eta\diam(B) + ||f(x)-f(y)|-d(x,y)|\\
&\leq 4\eta\diam(B) + \bi(B)^2\diam(B)\\
&\leq 9\bi(B)\diam(B).
\end{align*}

Lastly, we may postcompose $f$ with a translation so that the center of $B$ maps to $0\in\RR$. This yields a $1$-Lipschitz map $I:B\rightarrow (-r,r)$ such that
$$ \epsilon(I) \leq C\bi(B)\diam(B),$$
as desired.
\end{proof}

For the next lemma, we recall some of the notation used in the proof of Theorem \ref{thm:upperbound}. In particular, suppose we have a doubling curve $\Gamma$ and a $1$-Lipschitz parametrization $\gamma:\bT\rightarrow\Gamma$ as in the start of subsection \ref{subsec:curve}. We will use the notion of $\bti(\tau)$ for an arc $\tau$ of $\gamma$ defined in subsection \ref{sec:beta}. We will also use the ``cube'' decomposition and the division of $\cB$ into families $\cB^{p_1}_{M,i}$ given in subsection \ref{subsec:cubes}, and the distinction between ``non-flat'' balls $\cB_1$ and ``flat'' balls $\cB_2$ given in subsection \ref{subsec:flatnonflat}.

If $B$ is a ball in $\Gamma$, we use the notation $\Lambda_c(B)$ to denote the connected components of $\gamma^{-1}(B)$ whose images contain the center of $B$.

\begin{lemma}\label{lemma:betacompare2}
Let $\Gamma$ be a curve in a metric space and $B=B(z,r)\in \cB\setminus \cB_0$ a ball in the multiresolution family. Let
$$ I :B\rightarrow (-r,r)$$
be the mapping provided by Lemma \ref{lemma:betacompare1}. Lastly, let $\tau\in\Lambda_c(B)$. 

Then
$$ \delta(I) \leq C'( \bti(\tau)^2 r + \bi(B) r),$$
where $C'$ is an absolute constant.
\end{lemma}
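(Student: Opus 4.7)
The key observation is that the map $I$ furnished by Lemma \ref{lemma:betacompare1} is $1$-Lipschitz (hence continuous) on $B$, so $I \circ \gamma|_\tau$ is continuous and $I(\gamma(\tau))$ is a connected subset of $(-r,r)$, i.e., a sub-interval. The strategy is to show that this sub-interval occupies nearly all of $(-r,r)$, with gaps of length $O(\bti(\tau)^2 r + \bi(B) r)$ at each end; this immediately yields the desired estimate on $\delta(I)$, since any $u \in (-r,r)$ must lie within this distance of $I(\gamma(\tau)) \subseteq I(B)$.

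To carry this out, I would write $[a_0, c_0]$ for the (closed) domain of $\tau$ and fix $b \in (a_0, c_0)$ with $\gamma(b) = z$, the center of $B$. Because $B \notin \cB_0$ we have $\gamma(\tau) \subsetneq \Gamma$, so by the maximality of the connected component and continuity of $\gamma$, the endpoints $\gamma(a_0), \gamma(c_0)$ lie on the metric sphere $\{x : d(x,z) = r\}$. Applying the definition of $\bti(\tau)$ to the ordered triple $(a_0, b, c_0)$ gives
$$ d(\gamma(a_0), \gamma(c_0)) \geq d(\gamma(a_0), z) + d(z, \gamma(c_0)) - \bti(\tau)^2 \diam(\tau) = 2r - \bti(\tau)^2\diam(\tau). $$
Combining this with the quasi-isometry bound $\epsilon(I) \leq C \bi(B)\diam(B)$ supplied by Lemma \ref{lemma:betacompare1} yields
$$ |I(\gamma(a_0)) - I(\gamma(c_0))| \geq 2r - \bti(\tau)^2\diam(\tau) - C\bi(B)\diam(B) =: 2r - \epsilon. $$

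To finish, set $u = I(\gamma(a_0))$ and $v = I(\gamma(c_0))$; these satisfy $u, v \in (-r,r)$ and $|u-v| \geq 2r - \epsilon$. An elementary arithmetic argument then forces $u$ and $v$ to lie on opposite sides of $0$ with $\min(|u|, |v|) \geq r - \epsilon$. Since $I(\gamma(\tau))$ is a connected subinterval of $(-r,r)$ containing both $u$ and $v$, it contains $[-(r-\epsilon), r-\epsilon]$, so every point of $(-r, r)$ lies within distance $\epsilon$ of $I(\gamma(\tau)) \subseteq I(B)$. This gives $\delta(I) \leq \epsilon \leq C'(\bti(\tau)^2 r + \bi(B) r)$ since $\diam(\tau), \diam(B) \leq 2r$. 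The only mild subtlety is pinning the endpoints $\gamma(a_0), \gamma(c_0)$ to $\partial B$, for which I use $B \notin \cB_0$; after that, everything is bookkeeping.
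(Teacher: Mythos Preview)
Your proposal is correct and follows essentially the same approach as the paper: both argue that the endpoints of $\tau$ sit on $\partial B$ (using $B\notin\cB_0$), apply the definition of $\bti$ to the triple (endpoint, center, endpoint) to get $d(\gamma(a_0),\gamma(c_0))\geq 2r-\bti(\tau)^2\diam(\tau)$, and then use the $\epsilon(I)$ bound from Lemma~\ref{lemma:betacompare1} together with connectedness of $I(\gamma(\tau))$ to conclude that the image covers an interval of length $\geq 2r-\epsilon$ inside $(-r,r)$. Your write-up is in fact a bit more explicit than the paper's about why $I(\gamma(\tau))$ is an interval and about the arithmetic forcing it to contain $[-(r-\epsilon),r-\epsilon]$.
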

\begin{proof}
We parametrize $\Gamma$ by $\gamma$ as in subsection \ref{subsec:curve}.

We first argue, similarly to Lemma \ref{lemma:221}, that
\begin{equation}\label{eq:lengthbound}
\diam(\tau) \geq 2r - \bti(\tau)^2 r.
\end{equation}
for some absolute constant $c>0$. Indeed, let $\tau = \gamma_{[a,b]}$, so that $O_1 = \gamma(a)$ and $O_2=\gamma(b)$ lie on $\partial B$. Let $O=\gamma(c)$ be the center of $B$, for some $c\in (a,b)$.

Then 
\begin{align*}
d(O_1, O_2) &= 2r - \partial_1(O_1,O,O_2)\\
&\geq 2r - \bti(\tau)^2\diam(\tau),
\end{align*}
which proves \eqref{eq:lengthbound}.

It now follows from \eqref{eq:lengthbound}, and the properties of our chosen $I$, that $I(\tau) \subseteq I(B)\subseteq (-r,r)$ contains an interval of length at least 
$$2r - \bti(\tau)^2r - C\bi(B)r.$$
Hence $\delta(I) \leq \bti(\tau)^2 r + C\bti(B) r$.
\end{proof}

\begin{proof}[Proof of Corollary \ref{cor:GHTST}]
We have immediately from Lemmas \ref{lemma:betacompare1}  and \ref{lemma:betacompare2} that
$$ \alpha(B)r \lesssim \bi(B)r + \bti(\tau)^2 r$$
for each $B=B(z,r) \in \cB\setminus \cB_0$ and $\tau\in \Lambda_c(B)$.

We therefore have, for $p>2$, that
\begin{equation}\label{eq:alphasum}
\sum_{B\in\cB\setminus\cB_0} \alpha(B)^p\rad(B) \lesssim \sum_{B\in \cB\setminus \cB_0} \bi(B)^p\diam(B) + \sum_{B\in \cB\setminus \cB_0} \bti(\tau_B)^{2p}\diam(B),
\end{equation}
where $B\mapsto \tau_B$ is any function that maps each ball $B\in\cB\setminus\cB_0$ to an arc $\tau\in\Lambda_c(B)$.

The first sum on the right hand side of \eqref{eq:alphasum} is bounded by $\length(\gamma)$, up to a constant depending only on $p$ and the doubling constant of $\Gamma$, by Theorem \ref{thm:upperbound}. It remains to bound the second sum, which we do using Lemma \ref{lemma:onefamily}, similarly to the proof of Proposition \ref{prop:nonflat}.

Recall the division of $\cB$ into families $\cB^{p_1}_{M,i}$ for $p_1\in \{1,\dots, P_1\}$, $M\in\mathbb{N}$, and $i\in \{1, \dots, KM\}$, and the construction of families of cubes $\cQ_{M,i}$. For each choice of $p_1,  M, i$, let
$$ \cB^{p_1, *}_{M,i} = \{B\in \cB^{p_1}_{M,i} : \bti(\tau)\leq \bi(B) \text{ for all } \tau\in \Lambda_c(B)\},$$
$$ \cB^{p_1, **}_{M,i} = \{B\in \cB^{p_1}_{M,i} : \bti(\tau)>\bi(B) \text{ for some } \tau=\tau_B\in \Lambda_c(B)\},$$
In that case, we control the third term in equation \eqref{eq:alphasum} as follows:
\begin{align}
\sum_{B\in \cB\setminus \cB_0} \bti(\tau_B)^{2p}\diam(B) &= \sum_{p_1=1}^{P_1}\sum_{M=1}^\infty\sum_{i=1}^{KM}\sum_{B\in \cB^{p_1, *}_{M,i}} \bti(\tau_B)^{2p}\diam(B) + \sum_{p_1=1}^{P_1}\sum_{M=1}^\infty\sum_{i=1}^{KM}\sum_{B\in\cB^{p_1, **}_{M,i}} \bti(\tau_B)^{2p}\diam(B)\\
&\leq \sum_{p_1=1}^{P_1}\sum_{M=1}^\infty\sum_{i=1}^{KM}\sum_{B\in \cB^{p_1, *}_{M,i}} \bi(\tau_B)^{2p}\diam(B) + \sum_{p_1=1}^{P_1}\sum_{M=1}^\infty\sum_{i=1}^{KM}\sum_{B\in\cB^{p_1, **}_{M,i}} \bti(\tau_B)^{2p}\diam(B) \label{eq:twosums}
\end{align}
We control the first main sum in \eqref{eq:twosums} simply by Theorem \ref{thm:upperbound}. For the second main sum in \eqref{eq:twosums}, notice that each ball in $B\in\cB^{p_1, **}_{M,i}$ is non-flat, i.e., in $\cB_1$. By Lemma \ref{lemma:onefamily}, we can therefore control each innermost sum in the second main sum of \eqref{eq:twosums} by
$$ \sum_{B\in\cB^{p_1, **}_{M,i}} \bti(\tau_B)^{2p}\diam(B) \lesssim \sum_{Q\in\cQ^{p_1}_{M,i}\atop B(Q)\in \cB_1} \bti(\tau_Q)^{2p}\diam(\tau_Q) \lesssim \length(\gamma),$$
where $\tau_Q$ is an extension of $\tau_B$ to an arc in $\Lambda(Q)$.

Hence, the second main sum in \eqref{eq:twosums} is controlled by
\begin{align*}
\sum_{p_1=1}^{P_1}\sum_{M=1}^\infty\sum_{i=1}^{KM}\sum_{B\in\cB^{p_1, **}_{M,i}} \bti(\tau_B)^{2p}\diam(B) &\leq \sum_{M=1}^\infty 2P_1 KM 2^{-(2p-2)(M-1)/2} \sum\limits_{Q\in \cQ^{p_1}_{M,i} \atop B(Q)\in \cB_1} \bti(\tau_Q)^2\diam(\tau_Q)\\
&\lesssim \sum_{M=1}^\infty P_1 KM 2^{-(2p-2)(M-1)/2} \length(\gamma)\\
&\lesssim \length(\gamma).
\end{align*}
This completes the proof.

\end{proof}

\section{$\beta$ numbers for nets in $\ell_\infty$}\label{sec:ellinfty}

Recall the notion of $\beta^{\Gamma,\text{ net}}_{\ell_\infty}$ from subsection \ref{subsub:ellinfty}.

In the proof of Corollary \ref{cor:ellinfty}, we will use the following notation: We say that an $n$-tuple of points $(x_1,\dots,x_n)$ in $\ell_\infty$ is \textit{ordered} if the map $o(x_i)=i$ is an order in the sense of Hahlomaa. (See Definition \ref{def:order}.) In other words, the $n$-tuple is ordered when
$$ \|x_i - x_k\| > \max\{\|x_i-x_j\|, \|x_j-x_k\|\} \text{ for each } i\leq j \leq k.$$
We say that the $n$-tuple is \textit{$r$-separated}, for some $r\geq 0$, if $\|x_i - x_k\|\geq r$ for each $i,j$.

Corollary \ref{cor:ellinfty} will follow from Lemma \ref{lem:ellinfty} below and our main result, Theorem \ref{thm:upperbound}.

\begin{lemma}\label{lem:ellinfty}
Let $(x_1, x_2, \dots, x_n)$ be an ordered $n$-tuple of $r$-separated points in $\ell_\infty$. 

Assume that 
$$ h = \sup_{i<j<k} \partial_1(x_i, x_j, x_k) < r/200. $$

Then there is a geodesic $L\subset \ell_\infty$ such that
$$ \sup_{1\leq i \leq n} \dist(x_i, L) \leq  15h $$
\end{lemma}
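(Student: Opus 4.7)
The plan is to construct $L$ as an isometric parametrization $L \colon [0, t_n] \to \ell_\infty$ built coordinate by coordinate. Set $t_i := \|x_1 - x_i\|_{\ell_\infty}$; the ordered hypothesis forces $0 = t_1 < t_2 < \cdots < t_n$. I aim for $L(0) = x_1$, $L(t_n) = x_n$, and $\|L(t_i) - x_i\|_{\ell_\infty} \leq h$ for every $i$, which suffices (with room to spare) for the claimed $15h$.

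The key preliminary estimate, obtained by applying $\partial_1(x_1, x_p, x_q) \leq h$ to the ordered triple $(x_1, x_p, x_q)$ when $1 < p < q$ (the case $p = 1$ holds with no slack, since $\|x_1 - x_q\| = t_q$), is
\[
\|x_p - x_q\|_{\ell_\infty} \leq (t_q - t_p) + h \quad \text{for all } 1 \leq p < q \leq n.
\]
For each coordinate index $k$, setting $a_i := x_i^k$ then gives $|a_p - a_q| \leq (t_q - t_p) + h$ for $p < q$, together with the sharper boundary bounds $|a_1 - a_i| \leq t_i$ and $|a_i - a_n| \leq (t_n - t_i) + h$.

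Now, for each coordinate $k$, define
\[
M^k(t) := \max_{1 \leq i \leq n} \bigl(a_i - |t - t_i|\bigr), \qquad L^k(t) := \min\bigl(M^k(t),\, a_1 + t,\, a_n + (t_n - t)\bigr).
\]
Each term in the outer minimum is $1$-Lipschitz in $t$, so $L^k$ is $1$-Lipschitz. A short case analysis on $j$ using the preliminary estimate shows $M^k(t_i) \in [a_i, a_i + h]$, while the two clamping terms are each $\geq a_i - h$ by the boundary bounds; hence $|L^k(t_i) - a_i| \leq h$ for every $i$. The endpoint values also match exactly: $M^k(0) = a_1$ is achieved at $i = 1$ and pins $L^k(0) = a_1$, and symmetric reasoning with the clamp by $a_n + (t_n - t)$ at $t = t_n$ gives $L^k(t_n) = a_n$.

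Assembling $L(t) := (L^k(t))_k$ yields a $1$-Lipschitz map $[0, t_n] \to \ell_\infty$ with $L(0) = x_1$ and $L(t_n) = x_n$, so $\|L(t_n) - L(0)\| = t_n$ equals the parameter span and $L$ is automatically an isometric embedding, i.e., a geodesic. Then $\dist(x_i, L) \leq \|L(t_i) - x_i\|_{\ell_\infty} = \sup_k |L^k(t_i) - a_i| \leq h \leq 15h$, as desired. The main obstacle is the bookkeeping for the upper envelope bound $M^k(t_i) \leq a_i + h$, which is the one place where the ordering hypothesis is used nontrivially (via the preliminary estimate); notably, this approach uses neither the $r$-separation hypothesis nor the bound $h < r/200$, and produces the sharper constant $1$ in place of the stated $15$.
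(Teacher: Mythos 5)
Your proof is correct, and it takes a genuinely different (and quantitatively stronger) route than the paper's. The paper first perturbs $x_1$ by at most $h$ so that a single coordinate $i_0$ realizes $\|z_1-z_n\|$ (Lemma \ref{lem:supmax}; this is the step that consumes the $r$-separation and the hypothesis $h<r/200$, which are needed to keep the perturbed tuple ordered), then shows each coordinate is a coarse $1$-Lipschitz function of coordinate $i_0$ (Lemma \ref{lem:nearlipgraph}(a)), applies a McShane-type extension coordinatewise (Lemma \ref{lem:extension}), and checks via Lemma \ref{lem:nearlipgraph}(b) that the resulting polygonal path is a geodesic; the losses $h+2\cdot 7h$ account for the constant $15$. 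You instead parametrize by $t_i=\|x_1-x_i\|$ (increasing by the order), and build each coordinate directly as the minimum of an upper envelope of tent functions with two affine clamps --- in effect a two-sided McShane--Whitney extension arranged to hit $x_1$ and $x_n$ exactly --- so that the standard ``$1$-Lipschitz curve with endpoints at full distance'' argument certifies a geodesic with no perturbation step at all. I checked the key inequalities: $M^k(t_i)\in[a_i,a_i+h]$ follows from $\partial_1(x_1,x_p,x_q)\le h$, the clamps are bounded below by $a_i-h$ via $|a_1-a_i|\le t_i$ and $|a_i-a_n|\le (t_n-t_i)+h$, and the exact endpoint matching is what makes the isometry argument work. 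Your version buys the sharp constant $1$ in place of $15$ and dispenses with both the separation hypothesis and the smallness of $h$, which would in fact simplify the application in the proof of Corollary \ref{cor:ellinfty}. The only point you share with the paper's proof is the tacit use of the fact that a geodesic segment in $\ell_\infty$ extends to a bi-infinite geodesic (the paper asserts this without proof as well); if ``geodesic'' is read as ``geodesic segment'' this is moot.
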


\begin{proof}[Proof of Corollary \ref{cor:ellinfty}]
As in the proof of Theorem \ref{thm:upperbound}, we must first dispose of the balls that are ``too large''.  Namely, let 
$$ \cB_0 = \{B\in \cB: \diam(B) \geq \frac{1}{10}\diam(\Gamma)\}.$$

Let $n_0$ be the smallest integer such that $A2^{-n}\geq \frac{1}{10}\diam(\Gamma)$. Then, as in Lemma \ref{lemma:endpoint}, we have that each $B\in \cB_0$ is at scale $2^{-n}$ for some $n\leq n_0$. By the doubling property of $\Gamma$, there are at most a fixed number $D$ of balls $B\in \cB_0$ at each such scale. Therefore
\begin{align*}
\sum_{B\in \cB_0} \beta^{\Gamma,\text{ net}}_{\ell_\infty}(B)^p\diam(B) &\leq \sum_{B\in \cB_0} \left(\frac{\diam(\Gamma)}{\rad(B)}\right)^{p} \diam(B)\\
 &\lesssim \sum_{n=-n_0}^\infty \diam(\Gamma)^{p} 2^{n(1-p)}\\
&\lesssim \diam(\Gamma)^{p} 2^{-n_0(1-p)}\\
&\lesssim \diam(\Gamma)\\
&\leq \length(\gamma)
\end{align*}

To complete the proof of Corollary \ref{cor:ellinfty}, it suffices to show that for each ball $B\in \cB\setminus\cB_0$ from the multiresolution family of $\Gamma$,
\begin{equation}\label{eq:bibound}
 \beta^{\Gamma,\text{ net}}_{\ell_\infty}(B) \lesssim \bi(B)^2,
\end{equation}
with an absolute implied constant, as we can then apply Theorem \ref{thm:upperbound}.

By adjusting the implied constant in \eqref{eq:bibound}, we may assume that $\bi(B)^2$ is small enough so that Lemma \ref{lem:order} is applicable.

Given a ball $B\in \cB\setminus\cB_0$, write $\{x_1,\dots,x_n\}=X_{n+1}\cap B$, where $X_{n+1}$ is the net at scale $2^{-(n+1)}$ of $\Gamma$.

By Lemma \ref{lem:order}(ii), we may re-number the points so that the $n$-tuple $(x_1, \dots, x_n)$ is ordered. Moreover, it is $r$-separated for $r=2^{-(n+1)} \gtrsim \rad(B)$.

Let 
$$h = \sup_{i<j<k} \del_1(x_i,x_j,x_k) \leq \bi(B)^2 \rad(B).$$

If $h\geq r/200$, then \eqref{eq:bibound} holds automatically, since $r\gtrsim \rad(B)$.

Otherwise, Lemma \ref{lem:ellinfty} implies that
$$ \beta^{\Gamma,\text{ net}}_{\ell_\infty}(B) \lesssim \frac{15h}{\rad(B)} \lesssim \bi(B)^2.$$
\end{proof}

\begin{remark}
Lemma \ref{lem:ellinfty} is false in Euclidean space. For example, consider the three $1$-separated points $x_1=(0,0), x_2=(1,t), x_3=(2,0)$ in $\RR^2$. In this case,
$$ \sup_{i<j<k} \partial_1(x_i, x_j, x_k) \lesssim t^2 $$
but
$$ \sup_{i} \dist(x_i, L) \gtrsim t. $$
for any geodesic (line) $L$ in Euclidean space.

\end{remark}

We now focus on building up some preliminary facts needed for the proof of Lemma \ref{lem:ellinfty}. For a point $x$ in $\ell_\infty$, we write $x = (x^m)_{m=1}^\infty$.

\begin{lemma}\label{lem:nearlipgraph}
Let $S=(x_1, x_2, \dots, x_n)$ be an $n$-tuple of points in $\ell_\infty$. Assume that 
$$ |x_1^1 - x_n^1| = \|x_1 - x_n\|.$$

\begin{enumerate}[(a)]
\item If
$$\del_1(x_i, x_j, x_k) \leq h$$
 for each $i<j<k$, then
$$ \|x_i - x_j\| \leq |x_i^1 -x_j^1| + 2h$$
for each $i,j$. 

\item If $(x_1,\dots,x_n)$ is ordered and
$$ \|x_i - x_j\| = |x_i^1 -x_j^1|$$
 for each $i<j<k$, then
$$\del_1(x_i, x_j, x_k) =0 $$
 for each $i<j<k$.
\end{enumerate}
\end{lemma}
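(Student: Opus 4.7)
The plan is to track, for each pair, how far the first-coordinate projection $f\colon\ell_\infty\to\RR$, $f(x)=x^1$, deviates from an isometry. Since $f$ is $1$-Lipschitz, the quantity
\[ D_{ij} := \|x_i - x_j\| - |x_i^1 - x_j^1| \]
is nonnegative, and the hypothesis $|x_1^1 - x_n^1| = \|x_1 - x_n\|$ in (a) simply says $D_{1n} = 0$; the conclusion of (a) is then $D_{ij}\leq 2h$. The key identity I would use is the decomposition
\[ \del_1(x_a, x_b, x_c) = D_{ab} + D_{bc} - D_{ac} + T_{abc}, \]
valid for any triple, where $T_{abc} := |x_a^1 - x_b^1| + |x_b^1 - x_c^1| - |x_a^1 - x_c^1| \geq 0$ is itself a (nonnegative) triangle defect on $\RR$.

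For (a), I would first apply the hypothesis $\del_1(x_1,x_j,x_n) \leq h$ together with $D_{1n}=0$ and $T_{1jn}\geq 0$ to deduce $D_{1j}+D_{jn}\leq h$, and in particular $D_{1j}\leq h$ for all $1\leq j\leq n$. Applying the hypothesis again to $(x_1,x_i,x_j)$ then gives
\[ D_{ij} \leq D_{1j} - D_{1i} + h \leq 2h \]
for $1 \leq i<j \leq n$; the case $i>j$ follows from the symmetry $D_{ij}=D_{ji}$, and the endpoint cases $i=1$ or $j=n$ are absorbed into the two steps above.

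For (b), the hypothesis forces $D_{ij}=0$ for every pair, so $f$ is an isometry on $\{x_1,\dots,x_n\}$. Transferring the ordered condition through $f$ gives $|x_i^1 - x_k^1|>\max\{|x_i^1-x_j^1|,|x_j^1-x_k^1|\}$ for $i<j<k$, and a short case check in $\RR$ forces $x_j^1$ to lie strictly between $x_i^1$ and $x_k^1$, so that $T_{ijk}=0$. The identity above then yields $\del_1(x_i,x_j,x_k)=0$. The whole argument is quite direct once one spots this decomposition; the only minor obstacle I anticipate is the sign-based case check in (b) ruling out $x_j^1$ outside the interval $[x_i^1,x_k^1]$, which is routine.
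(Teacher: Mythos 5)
Your proposal is correct, and the underlying mechanism is the same as the paper's: both proofs exploit that the first-coordinate projection is $1$-Lipschitz with equality at the endpoints $x_1,x_n$, telescope the triangle defects along a chain through $x_1$ and $x_n$ to bound the deviation $\|x_i-x_j\|-|x_i^1-x_j^1|$, and for (b) transfer the order to the first coordinates to force betweenness. The only difference is cosmetic: the paper runs (a) as a contradiction via one telescoped expansion of $\|x_1-x_n\|$ along $x_1\to x_i\to x_j\to x_n$, while you argue directly with the $D_{ij}$, $T_{abc}$ bookkeeping; both are the same computation.
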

\begin{proof}
We begin with (a). To the contrary, suppose we had $i<j$ such that 
$$ \|x_i - x_j\| > |x_i^1 -x_j^1| + 2h.$$
Then
\begin{align*}
|x_1^1 - x_n^1| &= \|x_1 - x_n\|\\
&= \|x_1 - x_i\| + \|x_i - x_j\| + \|x_j - x_n\| - \del_1(x_1, x_i, x_n) - \del_1(x_i, x_j, x_n)\\
&\geq  \|x_1 - x_i\| + \|x_i - x_j\| + \|x_j - x_n\| - 2h\\
&> |x_1^1-x_i^1| + |x_i^1-x_j^1| + |x_j^1-x_n^1|\\
&\geq |x_1^1 - x_n^1|,
\end{align*}
which is a contradiction.

For (b), we use the order to write
$$ \del_1(x_i,x_j,x_k) = \|x_i-x_j\|+\|x_j-x_k\|-\|x_i-x_k\| = |x^1_i-x^1_j|+|x^1_j-x^1_k|-|x^1_i-x^1_k| = \del(x^1_i, x^1_j, x^1_k) = 0.$$

\end{proof}

\begin{lemma}\label{lem:extension}
Let $E\subseteq \RR$ be non-empty and let $f:E\rightarrow\RR$ satisfy
\begin{equation}\label{eq:1tlip}
|f(x) - f(y)| \leq |x-y| + t
\end{equation}
for some $t\geq 0$ and all $x,y\in E$. 

Then there is a $1$-Lipschitz $g:\RR\rightarrow\RR$ such that
$$|g(z)-f(z)| \leq t $$
for all $z\in E$.
\end{lemma}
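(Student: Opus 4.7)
The plan is to use a McShane-type extension, tolerating the defect $t$ in the infimum. Specifically, I would define $g\colon \RR\to\RR$ by
$$ g(z) = \inf_{x\in E}\bigl(f(x) + |z-x|\bigr). $$
Each function $z\mapsto f(x)+|z-x|$ is $1$-Lipschitz, so their pointwise infimum is automatically $1$-Lipschitz as soon as it is finite. To see that $g(z) > -\infty$, I would fix any $x_0 \in E$ (which exists since $E$ is nonempty) and apply the hypothesis \eqref{eq:1tlip} to get $f(x) \geq f(x_0) - |x-x_0| - t$ for every $x\in E$; combined with the triangle inequality $-|x-x_0|+|z-x| \geq -|z-x_0|$, this gives $g(z) \geq f(x_0) - |z-x_0| - t$.

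Next, to verify the closeness condition on $E$, I would fix $z\in E$ and argue both inequalities. The upper bound $g(z)\leq f(z)$ follows immediately by taking $x=z$ in the defining infimum. For the lower bound, the hypothesis \eqref{eq:1tlip} gives $f(x)+|z-x|\geq f(z) - t$ for every $x\in E$, hence $g(z)\geq f(z)-t$. Combining the two yields $|g(z)-f(z)| \leq t$ on $E$, as required.

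There is no real obstacle here: the lemma is essentially the classical McShane extension with the additive defect $t$ absorbed directly into the one-sided estimate. The only thing to check is that the nonemptiness of $E$ plus the almost-Lipschitz hypothesis together suffice to make the infimum finite, which I have already arranged above.
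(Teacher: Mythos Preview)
Your proof is correct and follows essentially the same McShane-extension idea as the paper: the paper defines $g$ as the infimum over all $1$-Lipschitz majorants of $f$ and uses the functions $\tilde{f}_z(x)=f(z)+|x-z|+t$ to witness the upper bound, whereas you write down the explicit McShane formula $g(z)=\inf_{x\in E}(f(x)+|z-x|)$ directly. The only cosmetic difference is that your $g$ satisfies $f-t\le g\le f$ on $E$ while the paper's satisfies $f\le g\le f+t$, but the argument is the same.
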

\begin{proof}
Define
$$ g(x) = \inf\{\tilde{f}(x) : \tilde{f}\colon\RR\rightarrow \RR\text{ } 1\text{-Lipschitz}, \tilde{f}(z)\geq f(z) \text{ for all } z\in E\}.$$
Note that for any $x,z\in E$, we have
$$ \tilde{f}_z(x):= |x-z| + f(z) + t \geq f(x)$$
and so this $\tilde{f}_z$ is admissible in the above infimum.

This shows, first of all, that $g$ is finite on $E$ and hence a $1$-Lipschitz function satisfying $g\geq f$ on $E$. (See, e.g., \cite[Lemma 6.3]{He01}.) Moreover, for each $z\in E$, we have
$$ |g(z)-f(z)| = g(z) - f(z) \leq \tilde{f}_z(z) - f(z) = t,$$
which completes the proof.
\end{proof}

\begin{lemma}\label{lem:supmax}
Let $(x_1, x_2, \dots, x_n)$ be an ordered $n$-tuple of $r$-separated points in $\ell_\infty$. Assume that
$$\del_1(x_i, x_j, x_k) \leq h < \frac{r}{10}$$
 for each $i<j<k$.

Then there is an ordered $n$-tuple $(z_1, \dots, z_n)$ of $r/2$-separated points in $\ell_\infty$ and a coordinate $i_0\in\mathbb{N}$ such that
$$ \|z_i - x_i\| \leq h $$
for each $i$ and
$$ |z_1^{i_0} - z_n^{i_0}| = \|z_1 - z_n\|.$$
\end{lemma}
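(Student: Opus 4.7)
The plan is to perturb only the $i_0$-th coordinate of $x_1$ and $x_n$, leaving every other coordinate of every point unchanged. First I would set $d := \|x_1 - x_n\| \geq r$. The definition of the $\ell_\infty$ norm as a supremum produces an index $i_0 \in \bN$ with $|x_1^{i_0} - x_n^{i_0}| > d - h$ (we may assume $h > 0$, since if $h=0$ the points are ``colinear'' and one either picks a coordinate attaining the supremum or takes a limit of the construction with $h$ replaced by $\epsilon \downarrow 0$). After negating the $i_0$-th coordinate of all points if necessary, which is an isometry of $\ell_\infty$, I may assume $x_n^{i_0} - x_1^{i_0} > d - h$.

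Next I would define the perturbed tuple by
$$z_1^{i_0} = x_1^{i_0} - h/2, \quad z_n^{i_0} = x_n^{i_0} + h/2, \quad z_i^{i_0} = x_i^{i_0} \text{ for } 1<i<n,$$
and $z_i^j = x_i^j$ for every $j \neq i_0$ and every $i$. Then $\|z_i - x_i\| \leq h/2 \leq h$ by inspection, and $r/2$-separation follows from the triangle inequality: $\|z_i - z_k\| \geq \|x_i - x_k\| - h \geq r - h > r/2$, using $h < r/10$. The coordinate identity is also immediate: $z_n^{i_0} - z_1^{i_0} = (x_n^{i_0} - x_1^{i_0}) + h > d$, whereas $|z_1^j - z_n^j| = |x_1^j - x_n^j| \leq d$ for $j \neq i_0$, so $\|z_1 - z_n\| = z_n^{i_0} - z_1^{i_0} = |z_1^{i_0} - z_n^{i_0}|$.

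The only real subtlety is verifying that $(z_1, \dots, z_n)$ remains ordered. Here I would invoke the $\del_1$ hypothesis: for $i < j < k$, rearranging $\del_1(x_i, x_j, x_k) \leq h$ gives $\|x_i - x_k\| \geq \|x_i - x_j\| + \|x_j - x_k\| - h$, and hence $\|x_i - x_k\| - \|x_i - x_j\| \geq \|x_j - x_k\| - h \geq r - h$, and similarly with $\|x_j - x_k\|$ in place of $\|x_i - x_j\|$. Combined with $\|z_a - z_b\| \in [\|x_a - x_b\| - h, \|x_a - x_b\| + h]$ (since each $\|z_a - x_a\| \leq h/2$), this yields $\|z_i - z_k\| - \max(\|z_i - z_j\|, \|z_j - z_k\|) \geq r - 3h > 0$, confirming the order. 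The main obstacle, mild as it is, is choosing a perturbation that simultaneously pushes $z_n^{i_0} - z_1^{i_0}$ strictly past the supremum $d$ while remaining small enough not to destroy the order; the hypothesis $h < r/10$ provides exactly the slack needed for both.
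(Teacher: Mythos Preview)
Your proof is correct and follows essentially the same approach as the paper: pick a nearly-maximal coordinate $i_0$, perturb the endpoint(s) in that coordinate to make it attain the $\ell_\infty$ norm, and verify the order survives using the $\del_1$ bound and $r$-separation. The only cosmetic differences are that the paper perturbs only $x_1$ (by at most $h$) to hit $\|x_1-x_n\|$ exactly rather than splitting the perturbation symmetrically between $x_1$ and $x_n$ and overshooting, and that the paper checks the order via the inequality $\del_1(z_i,z_j,z_k)\le 7h < r-2h$ rather than your direct distance comparison; neither affects the argument.
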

\begin{proof}
We may find an $i_0\in\mathbb{N}$ such that
$$|x_1^{i_0} - x_n^{i_0}| \geq \|x_1 - x_n\| - h.$$
We then increase or decrease $x_1^{i_0}$ by $h$ so that
\begin{equation}\label{eq:supmax}
|x_1^{i_0} - x_n^{i_0}| = \|x_1 - x_n\|,
\end{equation}
holds. We relabel the resulting point $z_1$. Let $z_i=x_i$ for $i\geq 2$. 

Of course, we have $\|z_i - x_i\| \leq h$ for each $i$, and so the $\{z_i\}$ have mutual distances at least $r-2h\geq r/2$. It remains only to show that $(z_1,\dots, z_n)$ is ordered.

If $i<j<k$, then
$$ \|z_i - z_k\| = \|z_i - z_j\| + \|z_j - z_k\| - \del_1(z_i,z_j, z_k) > \max(\|z_i-z_j\|,\|z_j-z_k\|),$$
using the fact that $\del_1(z_i,z_j,z_k) \leq  \del_1(x_i,x_j,x_k) + 6h \leq h+6h < r-2h.$

\end{proof}

Finally, we prove Lemma \ref{lem:ellinfty}.

\begin{proof}[Proof of Lemma \ref{lem:ellinfty}]
Let $(x_1, x_2, \dots, x_n)$ be an ordered $n$-tuple of $r$-separated points in $\ell_\infty$. Assume that
$$\del_1(x_i, x_j, x_k) \leq h < \frac{r}{1000} \text{ for all } i<j<k.$$
We would like to find a geodesic $L$ in $\ell_\infty$ such that
$$\dist(x_i, L) \leq 15h$$
for each $i\in\{1,\dots,n\}$.

We begin by applying Lemma \ref{lem:supmax} to find an ordered $n$-tuple $(z_1, \dots, z_n)$ of $r/2$-separated points in $\ell_\infty$ and a coordinate $i_0\in\mathbb{N}$ such that
$$ \|z_i - x_i\| \leq h $$
for each $i$ and
$$ |z_1^{i_0} - z_n^{i_0}| = \|z_1 - z_n\|.$$

Note that we have
$$ \del_1(z_i,z_j,z_k) \leq h+6h = 7h$$
for each $i<j<k$. We set $r'=r/2$ and $h'=7h$. Note that $h' < r'/20$.

For the remainder of the proof, we will assume without loss of generality that $i_0=1$, which we can achieve by reordering the coordinates.

Fix $m\geq 2$ for the moment. By Lemma \ref{lem:nearlipgraph}(a), we have that
$$ |z_i^m - z_j^m| \leq \|z_i - z_j\| \leq |z_i^1 -z_j^1| + 2h'$$
for all $i,j$. 

Let $E = \{z_1^1, \dots, z_n^1\}\subseteq \RR$ and define $f_m:E\rightarrow \RR$ by $f_m(z_i^1) = z_i^m$. The function $f_m$ then satisfies the ``coarse $1$-Lipschitz'' property \eqref{eq:1tlip}, with $t=2h'$. By Lemma \ref{lem:extension}, there is a $1$-Lipschitz $g_m:\RR\rightarrow \RR$ satisfying
\begin{equation}\label{eq:g-f}
|g_m(z_i^1) - z_i^m|= |g_m(z_i^1) - f_m(z_i^1)| \leq 2h'.
\end{equation}

We now use each $g_m$, for $m\geq 2$, to define points $y_1,\dots, y_n \in \ell_\infty$. For $i\in\{1,\dots,n\}$, let
$$ y_i = (z_i^1, g_2(z_i^1), g_3(z_i^1), \dots).$$
Because of \eqref{eq:g-f} and the definition of $f_m$, we have
$$ \|y_i - z_i\| \leq 2h'$$
for each $i$. 

It follows, first of all, that $(y_1, \dots, y_n)$ is ordered just as $(z_1, \dots, z_n)$ is. Indeed, if $i<j<k$, then
$$ \|y_i - y_k\| = \|y_i - y_j\| + \|y_j - y_k\| - \del_1(y_i,y_j, y_k) \geq \max(\|y_i-y_j\|,\|y_j-y_k\|),$$
using the fact that $\del_1(y_i,y_j,y_k) \leq  \del_1(z_i,z_j,z_k) + 6h' \leq h'+6h' < r'-4h'.$

Furthermore, since each $g_m$ is $1$-Lipschitz, we have that
$$ |y_i^1 - y_j^1| = |z_i^1 - z_j^1| \geq |g_m(z_i^1) - g_m(z_j^1)| = |y_i^m - y_j^m|$$
for each $i,j\in\{1,\dots,n\}$ and $m\geq 2$, and so
$$ \|y_i - y_j\| = |y_i^1 - y_j^1|$$
for each $i,j\in\{1,\dots,n\}$. 

Therefore, Lemma \ref{lem:nearlipgraph}(b) implies that
\begin{equation}\label{eq:del0}
 \del_1(y_i,y_j,y_k) = 0
\end{equation}
for all $i<j<k$. 

It follows that the union of line segments $[y_1, y_2] \cup [y_2, y_3] \cup \dots \cup [y_{n-1},y_n]$ is a geodesic segment passing within distance $2h'$ of each $z_i$. This segment can then be extended to a bi-infinite geodesic $L\subset \ell_\infty^d$ with the same property.

Finally, since $\|z_i - x_i\|\leq h$ for each $i$, we see that
$$ \dist(x_i,L) \leq h+2h' = 15h$$
for each $i$, which completes the proof.

\end{proof}

\section{$\beta$ numbers in uniformly convex Banach spaces}\label{sec:Banach}

In this section, we prove the following:

\begin{lemma}\label{lem:bxbi}
Let $(X,\|\cdot\|)$ be a Banach space with modulus of convexity $\delta$ satisfying \eqref{eq:powerconvex} with $c>0$ and $q\geq 2$. Let $\Gamma\subset X$ be a compact, connected subset. Then
$$ \bx^\Gamma(B) \lesssim \bi^\Gamma(B)^{2/q} $$
for each ball $B$ in $X$ with $\diam(B)\leq\frac{1}{10}\diam(\Gamma)$. 

The implied constant depends only on $c$ and $q$.
\end{lemma}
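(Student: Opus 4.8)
The plan is to show that in a uniformly convex Banach space, a small triangle-inequality defect forces a triple of points to lie close to the affine line through two of them. Fix a ball $B = B(z,r)$ with $\diam(B)\le \frac1{10}\diam(\Gamma)$ and set $\beta = \bi^\Gamma(B)$. We may assume $\beta$ is smaller than any fixed absolute constant, since otherwise $\bx^\Gamma(B)\lesssim 1$ trivially (distances in $B$ are at most $2r$, so $\bx\le 2$). Because $\Gamma$ is connected and $\diam(B)\le\frac1{10}\diam(\Gamma)$, there is a point $x_0\in\Gamma\cap B$ near the center and points of $\Gamma\cap B$ at distance $\gtrsim r$ from $x_0$ in a ``forward'' and ``backward'' sense; more precisely, using Lemma \ref{lem:order}(ii) we may pass to a net $N=X_{n+1}\cap B$ which carries an order $o$, with $\#N\gtrsim A$, and whose endpoints $p,q$ (first and last in the order) satisfy $\|p-q\|\gtrsim r$. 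Let $L$ be the line through $p$ and $q$. The claim I want is
$$ \dist(x,L) \lesssim \beta^{2/q}\, r \quad\text{for all } x\in N,$$
and then, since $N$ is a $2^{-(n+1)}$-net for $\Gamma\cap B$ and $2^{-(n+1)}\lesssim \beta^{2/q}r$ once $\beta$ is bounded below away from $0$ (or, if $\beta$ is very small, $\dist$ from net points already dominates), passing from $N$ to $\Gamma\cap B$ only changes constants; this gives $\bx^\Gamma(B)\lesssim\beta^{2/q}$.

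The heart of the matter is a single-triple estimate: if $a,b,c\in X$ with $o(a)<o(b)<o(c)$, $\|a-c\|=:\ell\ge r/C$, and $\del_1(a,b,c)\le \beta^2 r \le \beta^2 C\ell$, then $\dist\big(b,[a,c]\big)\lesssim \beta^{2/q}\ell$. To see this, write $b-a$ and $c-b$ as vectors; by the order, $\|b-a\|+\|c-b\| = \|c-a\| + \del_1(a,b,c) = \ell + O(\beta^2\ell)$. Normalize $u = (b-a)/\|b-a\|$ and $v = (c-b)/\|c-b\|$, both unit vectors, and note $\|b-a\|u + \|c-b\|v = c-a$ has norm $\ell$, while $\|b-a\|+\|c-b\| \le \ell(1+O(\beta^2))$. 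Feeding this into the definition \eqref{eq:convexity} of the modulus of convexity — comparing $\|\tfrac{u+v}2\|$ to $1-\delta(\|u-v\|)$ after rescaling the two summands to have equal length, which costs only $O(\beta^2\ell)$ — yields $\delta(\|u-v\|)\lesssim \beta^2$, hence by \eqref{eq:powerconvex} $\|u-v\|\lesssim \beta^{2/q}$. This says $u$ and $v$ are nearly parallel, so $b$ lies within $\lesssim\beta^{2/q}\|b-a\| \le \beta^{2/q}\ell$ of the segment from $a$ in direction $u$, and in particular within $\lesssim\beta^{2/q}\ell$ of the line through $a$ and $c$ (one checks the endpoint $c$ is also within this distance of the ray, using the near-parallelism again).

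Applying this with $a=p$, $c=q$, and $b$ any point of $N$ (legal since the order restricted to $\{p,b,q\}$ still puts $b$ in the middle, as $p,q$ are the extreme elements), and using $\|p-q\|\gtrsim r$ together with $\del_1(p,b,q)\le \bi(B)^2\diam(B)\lesssim\beta^2 r$, gives $\dist(b,L)\lesssim\beta^{2/q}r$ for all $b\in N$, which is what we wanted.

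The main obstacle I anticipate is the geometric estimate relating the triangle defect to the modulus of convexity: the definition \eqref{eq:convexity} applies to \emph{unit} vectors with \emph{equal} weight, whereas here the two vectors $b-a$ and $c-b$ generally have different (though comparable) lengths and are only approximately unit after normalizing. Getting the bookkeeping right — rescaling both vectors to a common length, tracking that this rescaling introduces only an $O(\beta^2\ell)$ error (not $O(\beta^{2/q}\ell)$, which would be fatal), and then correctly invoking the ``$\|x\|\le1,\|y\|\le1$'' form of the modulus of convexity noted after the definition — is the delicate part. A secondary point to handle carefully is the reduction from an estimate about the finite ordered net $N$ to an estimate about all of $\Gamma\cap B$, and the bookkeeping of which absolute constants are allowed to depend on $A$ versus being truly absolute; but these are routine once the single-triple lemma is in hand.
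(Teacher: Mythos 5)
Your overall strategy is the same as the paper's: extract an ordered net via Lemma \ref{lem:order}, take the line $L$ through the two extreme points of the order, and use uniform convexity on each triple (extreme point, net point, extreme point) to convert the triangle-inequality defect $\del_1\leq\bi(B)^2\rad(B)$ into a distance-to-$L$ bound of order $\bi(B)^{2/q}r$. However, there are two concrete gaps. First, you use the coarse net $N=X_{n+1}\cap B$ from Lemma \ref{lem:order}(ii), whose mesh is $2^{-(n+1)}\approx r/(2A)$, a \emph{fixed fraction} of $r$. When $\beta=\bi(B)$ is small this mesh is not $\lesssim\beta^{2/q}r$, so a point of $\Gamma\cap B$ can sit at distance $\approx r/(2A)$ from every net point and your estimate on net points says nothing about it; your parenthetical ``if $\beta$ is very small, dist from net points already dominates'' does not rescue this (it is exactly the bad case). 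The fix is to use Lemma \ref{lem:order}(i) instead, with the $\eta\diam(B)$-net for $\eta=2\bi(B)$: then every point of $\Gamma\cap B$ is within $\approx\beta r\leq\beta^{2/q}r$ of a net point (using $q\geq 2$), and the passage from the net to $\Gamma\cap B$ is legitimate. This is what the paper does.

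Second, in the convexity step your claim that rescaling the two summands $\|b-a\|u$ and $\|c-b\|v$ to equal length ``costs only $O(\beta^2\ell)$'' is false when the triangle is lopsided: the cost is $\bigl|\|b-a\|-\|c-b\|\bigr|$, which can be of order $\ell$ even when $\del_1(a,b,c)$ is tiny (e.g.\ $b$ very close to $a$). What one actually gets, writing $s=\|b-a\|$, $t=\|c-b\|$ and rescaling to the \emph{smaller} length, is $\bigl\|\tfrac{u+v}{2}\bigr\|\geq 1-\tfrac{\del_1}{2\min(s,t)}$, hence $\|u-v\|\lesssim\bigl(\del_1/\min(s,t)\bigr)^{1/q}$, which degrades as $\min(s,t)\to 0$. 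The conclusion survives because the distance bound carries a compensating factor: $\dist(b,L)\lesssim \min(s,t)\|u-v\|+\del_1\lesssim \min(s,t)^{1-1/q}\del_1^{1/q}+\del_1\lesssim \ell^{1-1/q}\del_1^{1/q}$, which is the desired $\beta^{2/q}\ell$; but this bookkeeping must be done, and your sketch asserts the wrong intermediate inequality $\delta(\|u-v\|)\lesssim\beta^2$. The paper's Lemma \ref{lem:lineconvexity} handles exactly this point by applying the modulus of convexity in the two balls $\overline{B}(x,\|x-y\|)$ and $\overline{B}(z,\|y-z\|)$ and keeping the weights $\|x-y\|$ and $\|z-y\|$ in front of $\delta(\cdot)$, arriving at $\del_1\gtrsim h^q(\|x-y\|^{1-q}+\|z-y\|^{1-q})\gtrsim h^qr^{1-q}$. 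Both of your gaps are repairable without changing the architecture of the argument.
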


Using Theorem \ref{thm:upperbound}, we will then obtain Corollary \ref{cor:Banach}.

\begin{proof}[Proof of Corollary \ref{cor:Banach}]
Let $X$ be a Banach space satisfying the assumptions of the corollary and let $\Gamma$ be a doubling curve in $X$ with a multiresolution family $\cB$.

Exactly as in the proof of Corollary \ref{cor:ellinfty}, it suffices to show that
$$ \sum_{B\in \cB\setminus \cB_0} \bx^{\Gamma}(B)^{p}\diam(B) \lesssim \HH^1(\Gamma),$$
where
$$ \cB_0 = \{B\in\cB: \diam(B)\geq \frac{1}{10}\diam(\Gamma)\}.$$

Using Lemma \ref{lem:bxbi} and Theorem \ref{thm:upperbound}, we get
$$ \sum_{B\in \cB\setminus \cB_0} \bx^{\Gamma}(B)^{p}\diam(B) \lesssim \sum_{B\in \cB\setminus \cB_0} \bx^{\Gamma}(B)^{2p/q}\diam(B) \lesssim \HH^1(\Gamma),$$
which completes the proof.
\end{proof}

We now focus on proving Lemma \ref{lem:bxbi}.

\begin{lemma}\label{lem:lineconvexity}
Let $(X,\|\cdot\|)$ be a Banach space with modulus of convexity $\delta$ satisfying \eqref{eq:powerconvex} with $c>0$ and $q\geq 2$. Let $x,y,z$ be three points in $X$ with $\diam(\{x,y,z\})\leq r$, and let $L$ denote the line through $x$ and $z$. Then
$$ \left(\frac{\dist(y,L)}{r}\right)^q r \lesssim \del_1(x,y,z).$$
The implied constant depends only on $c$ and $q$.
\end{lemma}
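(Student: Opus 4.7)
The plan is to write $a = \|x-y\|$, $b = \|y-z\|$, $\ell = \|x-z\|$, all at most $r$, and set $\Delta = \partial_1(x,y,z) = a+b-\ell$. By the symmetry $x\leftrightarrow z$, I may assume $a \leq b$. I will choose the explicit point
$$ p = \tfrac{b}{a+b}\, x + \tfrac{a}{a+b}\, z \in L $$
and bound $\|y-p\|$ (since $\dist(y,L) \leq \|y-p\|$). A direct expansion of $y-p = \tfrac{b}{a+b}(y-x) + \tfrac{a}{a+b}(y-z)$ yields the key algebraic identity
$$ y - p = \tfrac{ab}{a+b}(\alpha+\beta), \qquad \alpha := (y-x)/a,\ \beta := (y-z)/b, $$
where $\alpha,\beta$ are unit vectors. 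This identity is the engine of the proof: it turns a geometric distance-to-line question into a question about the length of a sum of two unit vectors, precisely the situation the modulus of convexity is designed for.

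First I handle the easy regime $\Delta \geq a$: then $\dist(y,L) \leq \|y-x\| = a \leq \Delta$, and since $\dist(y,L) \leq r$ as well, I get $\dist(y,L)^q \leq \Delta\, r^{q-1}$, which is already the desired estimate.

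In the main regime $\Delta < a$, I lower-bound $\|\alpha - \beta\|$. From the identity $a\alpha - b\beta = z - x$ I have $\|a\alpha - b\beta\| = \ell$; writing $a\alpha - b\beta = a(\alpha-\beta) - (b-a)\beta$ and applying the reverse triangle inequality gives $a\|\alpha - \beta\| \geq \ell - (b-a) = 2a - \Delta$, hence
$$ \|\alpha - \beta\| \geq 2 - \Delta/a. $$
Now I feed this into uniform convexity applied to the unit vectors $u=\alpha$, $v=-\beta$: since $\|u-v\|=\|\alpha+\beta\|$ and $\|u+v\|=\|\alpha-\beta\|$, the definition of the modulus of convexity gives
$$ \delta(\|\alpha+\beta\|) \leq 1 - \tfrac{1}{2}\|\alpha-\beta\| \leq \Delta/(2a), $$
and the power bound $\delta(\epsilon) \geq c\epsilon^q$ then yields $\|\alpha+\beta\|^q \leq \Delta/(2ca)$.

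To conclude, I use $\tfrac{ab}{a+b} \leq a \leq r$:
$$ \|y-p\|^q = \bigl(\tfrac{ab}{a+b}\bigr)^q \|\alpha+\beta\|^q \leq a^q \cdot \tfrac{\Delta}{2ca} = \tfrac{a^{q-1}\Delta}{2c} \leq \tfrac{r^{q-1}\Delta}{2c}, $$
which rearranges to $(\dist(y,L)/r)^q\, r \lesssim \Delta$, as required. The only place that requires real thought is the lower bound on $\|\alpha-\beta\|$ coupled with the correct application of uniform convexity to $(\alpha,-\beta)$; once that is set up, the rest is bookkeeping, and no further case analysis is needed beyond the single split $\Delta \geq a$ vs.\ $\Delta < a$.
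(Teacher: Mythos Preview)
Your proof is correct. In fact your comparison point $p$ coincides exactly with the paper's point $y_0$ (both equal $\tfrac{b}{a+b}x+\tfrac{a}{a+b}z$), so the overall strategy is the same: pick this point on $L$ and bound $\|y-p\|$ via uniform convexity. The difference is in how the convexity bound is extracted. The paper observes that both $y$ and $y_0$ lie in $\overline{B}(x,\|x-y\|)$ and in $\overline{B}(z,\|z-y\|)$, applies the modulus of convexity separately in each ball (using the relaxed form allowing $\|u\|,\|v\|\leq 1$), and adds the two resulting inequalities to bound $\partial_1(x,y,z)$ from below by $h^q r^{1-q}$. You instead exploit the identity $y-p=\tfrac{ab}{a+b}(\alpha+\beta)$ and apply convexity a single time to the unit vectors $\alpha$ and $-\beta$, after deriving $\|\alpha-\beta\|\geq 2-\Delta/a$ from the reverse triangle inequality. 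Your route is slightly more streamlined (one application of convexity rather than two, and only unit vectors) and avoids invoking the relaxed $\|u\|,\|v\|\leq 1$ version of the modulus. Incidentally, your case split $\Delta\geq a$ versus $\Delta<a$ is harmless but unnecessary: since $\ell\geq b-a$ forces $\Delta\leq 2a$, the inequality $\|\alpha-\beta\|\geq 2-\Delta/a$ is always nonnegative and the main argument goes through unconditionally.
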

\begin{proof}
Assume that $y\notin L$, otherwise the lemma is trivial.

Let $y_0$ denote a point on the segment $[x,z]\subseteq L$ such that
\begin{equation}\label{eq:y0}
\frac{\|y_0-x\|}{\|y_0-x\| + \|y_0-z\|} = \frac{\|y-x\|}{\|y-x\| + \|y-z\|}
\end{equation}
Note that the quantity on the right side of \eqref{eq:y0} is in  $[0,1]$, and the quantity on the left side ranges continuously from $0$ to $1$ as $y$ moves from $x$ to $z$ in the segment $[x,z]$, so such a $y_0$ exists. By simple algebra, this $y_0$ also satisfies
\begin{equation}\label{eq:y0'}
\frac{\|y_0-z\|}{\|y_0-x\| + \|y_0-z\|} = \frac{\|y-z\|}{\|y-x\| + \|y-z\|}.
\end{equation}

It follows that
\begin{equation}\label{eq:y0x}
 \|y_0-x\| = \|y-x\|\left(\frac{\|y_0-x\| + \|y_0-z\|}{\|y-x\| + \|y-z\|}\right) = \|y-x\|\left(\frac{\|x-z\|}{\|y-x\| + \|y-z\|}\right)  \leq \|y-x\|
\end{equation}
and
\begin{equation}\label{eq:y0z}
 \|y_0-z\| = \|y-z\|\left(\frac{\|y_0-x\| + \|y_0-z\|}{\|y-x\| + \|y-z\|}\right) = \|y-z\|\left(\frac{\|x-z\|}{\|y-x\| + \|y-z\|}\right)  \leq \|y-z\|.
\end{equation}

Let $y' = \frac{y+y_0}{2}$ and $h = \|y-y_0\| = 2\|y-y'\|$.

Equation \eqref{eq:y0x} implies that $y$ and $y_0$ are in the closed ball $B=\overline{B}(x,\|x-y\|)$. We now want to apply \eqref{eq:convexity} to these points in $B$, which we may rescale and translate to the unit ball. Doing so, we see that
$$\delta(h/\|x-y\|) \leq 1 - \frac{\|y'-x\|}{\|x-y\|}$$
or
\begin{equation}\label{eq:xy}
\|x-y\|\delta\left(\frac{h}{\|x-y\|}\right) \leq \|x-y\| - \|y'-x\|.
\end{equation}
Doing the same on the ball $\overline{B}(z,\|y-z\|)$ (which contains $y$ and $y_0$ by \eqref{eq:y0z}), we see that
\begin{equation}\label{eq:yz}
\|z-y\|\delta\left(\frac{h}{\|z-y\|}\right) \leq \|z-y\| - \|y'-z\|.
\end{equation}
Using equations \eqref{eq:xy} and \eqref{eq:yz}, we obtain that
\begin{align*}
\del_1(x,y,z) &= \|x-y\| + \|z-y\| - \|x-z\|\\
&\geq \|x-y\| + \|z-y\| - \|x-y'\| - \|z-y'\|\\
&\geq \|x-y\|\delta\left(\frac{h}{\|x-y\|}\right) + \|z-y\|\delta\left(\frac{h}{\|z-y\|}\right)\\
&\gtrsim h^q(\|x-y\|^{1-q} + \|z-y\|^{1-q})\\
&\gtrsim h^q r^{1-q}.
\end{align*}
Since $y_0\in L$, $h$ is an upper bound for $\dist(y,L)$. Therefore,
$$ \left(\frac{\dist(y,L)}{r}\right)^q r \leq \left(\frac{h}{r}\right)^q r \lesssim \del_1(x,y,z),$$
as the lemma states.
\end{proof}

\begin{proof}[Proof of Lemma \ref{lem:bxbi}]
Fix a compact, connected set $\Gamma\subseteq X$ and a ball $B=B(x,r)$ centered on $\Gamma$ with $\diam(B)\leq \frac{1}{10}\diam(\Gamma)$. Note that for such $B$, $r\approx\diam(B)$, so we may freely interchange these at the cost of some absolute constant factors.

Write $\bi$ and $\bx$ for $\bi^\Gamma$ and $\bx^\Gamma$, respectively. We may assume, by adjusting the implied constant, that $\bi(B)$ is small enough to apply Lemma \ref{lem:order}.

By that lemma, we obtain a $2\bi(B)\diam(B)$-net $N\subseteq  B$ with an order $o:N\rightarrow\RR$ satisfying
\begin{equation}\label{eq:ordernet}
o(x)<o(y)<o(z) \Rightarrow d(x,z)>\max\{d(x,y),d(y,z)\}
\end{equation}
for all  $x,y,z\in N$. Ordering $N$ according to $o$, we  write
$$ N = \{x_1, x_2, \dots, x_n\}.$$

Let $L$ be the line passing through $x_1$ and $x_n$. For each $x_i\in N$, we have
$$ \left(\frac{\dist(x_i,L)}{2r}\right)^q (2r) \lesssim \del_1(x_1, x_i, x_n) = \del(x_1,x_i, x_n) \leq \bi(B)^2,$$
by Lemma \ref{lem:lineconvexity}. Here the equality between $\del_1$ and $\del$ comes from the order property \eqref{eq:ordernet}.

Therefore, for each $x_i\in N$ we have
$$ \dist(x_i,L) \lesssim \bi(B)^{2/q} r.$$

It follows that for each $x\in B$, we have
$$ \dist(x,L) \lesssim \bi(B)^{2/q} r + \bi(B)r \lesssim \bi(B)^{2/q} r,$$
where the second inequality follows from the fact that $q\geq 2$.

It follows that
$$ \bx(B) \lesssim \bi(B)^{2/q},$$
as Lemma \ref{lem:bxbi} states.

\end{proof}

\section{A comparison with the Heisenberg group}\label{sec:Heisenberg}
Fix an isometric embedding $\iota\colon \bH\rightarrow \ell_\infty$. As different choice of embedding $\iota$ will not affect the results in this section, we suppress $\iota$ throughout and simply consider $\bH \subseteq \ell_\infty$,

Fix a curve $\Gamma\subseteq\bH\subseteq \ell_\infty$, with multiresolution family $\cB$ having inflation factor $A=10$. Recall that we defined a measure on $\cB$ by setting
$$ \cM_r(\cB') = \sum_{B\in\cB'} \beta^{K}_{\bH}(B)^r\diam(B) $$
for $\cB'\subseteq \cB$ and $r<4$. 

Theorem \ref{thm:LS} then implies that if $r<4$ and $\Gamma$ is a rectifiable curve in the Heisenberg group satisfying 
\begin{equation}\label{eq:longHcurve}
\HH^1(\Gamma)\geq 2C_r\diam(\Gamma),
\end{equation}
then 
\begin{equation}\label{eq:bHsum}
\cM_r(\cB) \geq \frac{1}{2C_r}\HH^1(\Gamma).
\end{equation}

On the other hand, by Corollary \ref{cor:ellinfty}, we have for each $p>1$, that
\begin{equation}\label{eq:bGHsum}
\sum_{B\in\cB} \beta^{\Gamma,\text{ net}}_{\ell_\infty}(B)^p \diam(B) \leq C_p\HH^1(\Gamma),
\end{equation}

Comparing these will yield Corollary \ref{cor:Hcurves}, which says that the set
$$ \cB_{c,q} = \{ B\in\cB: \beta^{\Gamma,\text{ net}}_{\ell_\infty}(B) > c\beta^{\Gamma}_{\bH}(B)^q \}$$
is small, measured by $\cM_r$.

\begin{proof}[Proof of Corollary \ref{cor:Hcurves}]
Let $p=r/q>1$.

Using \eqref{eq:bGHsum}, we have
\begin{align*}
\sum_{B\in \cB_{c,q}} \beta^{\Gamma}_{\bH}(B)^r\diam(B) &\leq c^{-p} \sum_{B\in \cB_{c,q}}\beta^{\Gamma,\text{ net}}_{\ell_\infty}(B)^p \diam(B)\\
&\leq c^{-p} C_p\HH^1(\Gamma)\\
&\leq 2c^{-p} C_p C_r \sum_{B\in\cB} \beta^{\Gamma}_{\bH}(B)^r\diam(B)
\end{align*}
If $c$ is chosen, depending on $q$, $r$, and $\delta$, so that
$$ 2c^{-p} C_p C_r < \delta,$$
then that completes the proof.
\end{proof}

In the next two remarks, we argue that Corollary \ref{cor:Hcurves} indicates some genuine difference between Euclidean space and the the Heisenberg group, in the sense that the analogous Euclidean statement is in some sense trivial while Corollary \ref{cor:Hcurves} is not.

\begin{remark}\label{rmk:Rncurves}
Fix an isometric embedding of $\RR^n$ into $\ell_\infty$ and a compact subset $K\subset \RR^n$. Given Theorem \ref{t:TST}, the natural analog of Corollary \ref{cor:Hcurves} might be to show, for a suitable choice of $c=c_{\delta}$, that
\begin{equation}\label{eq:Rncor}
\cM_2\left(\{B\in \cB: \beta^{\Gamma,\text{ net}}_{\ell_\infty}(B) > c\beta^{\Gamma}_{\RR^n}(B)^q \}\right) < \delta\HH^1(\Gamma).
\end{equation}
However, while true, this statement is trivial in $\RR^n$ for all $q\leq 2$, because in fact one may make the measured collection of balls not only small but \textit{empty} by an appropriate choice of $c$. (In particular, \eqref{eq:Rncor} holds for all sets in $\RR^n$, and not simply curves.) This is for the following reason:

One may show that for any ball $B$ in a multi-resolution family for $K$, we have
\begin{equation}\label{eq:epsilonbeta}
\beta^{\Gamma,\text{ net}}_{\ell_\infty}(B) \lesssim \beta^{K}_{\RR^n}(B)^2,
\end{equation}
with an absolute implied constant (once $A=10$ is fixed). Indeed, consider a $2^{-(n+1)}$-separated net $N$ in $K\cap B(x,A2^{-n})$, and let $\beta = \beta^{K}_{\RR^n}(B(A2^{-n}))$. As in the proof of Corollary \ref{cor:ellinfty}, we may assume $N$ is ordered and $\beta< A/100 = 1/10$. There is a line $L$ containing $N$ in its $2\beta$ neighborhood. Standard computations with the Pythagorean theorem show that the orthogonal projection $\pi$ onto $L$ satisfies
$$ ||\pi(y) - \pi(z)| - |y-z|| \lesssim \beta^2 \text{ for all } y,z\in N.$$
(Note that it is important here that $y,z\in N$ are well-separated.) It follows that for any ordered triple of points $a,b,c\in N$,
$$ \del_1(a,b,c) \lesssim \del_1(\pi(a),\pi(b),\pi(c)) + \beta^2 = \beta^2.$$
Hence, Lemma \ref{lem:ellinfty} implies that
$$\beta^{\Gamma,\text{ net}}_{\ell_\infty}(B) \lesssim \beta^2,$$
proving \eqref{eq:epsilonbeta}. 

In turn, \eqref{eq:epsilonbeta} shows that, if $q\leq 2$, the collection of balls summed over in \eqref{eq:Rncor} can be made empty by choosing $c$ sufficiently large, which justifies the statement made at the beginning of this remark.
\end{remark}

\begin{remark}\label{rmk:Hcurves}
In the Heisenberg group, on the other hand, Corollary \ref{cor:Hcurves} is non-trivial in the following sense: For each fixed $q\in (2, 4)$, one can construct a set $K\subseteq \bH$ and a ball $B$ centered on $K$ such that
$$ \frac{\beta^{K,\text{ net}}_{\ell_\infty}(B)}{\beta^{K, \text{ net}}_{\bH}(B)^{q}}$$ 
is arbitrarily large. Indeed, page 2 of \cite{LS15} gives an example, for any $\rho>0$, of a $1$-separated set $K=\{a,b,c\}$ in $B=B(a,2)\subseteq \mathbb{H}$ such that 
$$ \rho > \del(a,b,c) \gtrsim \beta^{K}_{\bH}(B)^{2},$$
with an absolute implied constant.

Since $\del(a,b,c)$ is easily seen to be a lower bound for $\beta^{K,\text{ net}}_{\ell_\infty}(B)$, we see that
$$ \frac{\beta^{K,\text{ net}}_{\ell_\infty}(B)}{\beta^{K}_{\bH}(B)^{q} } \gtrsim \frac{1}{\rho^{q-2}},$$
which can be made arbitrarily large.

In particular, given $q\in (2,4)$, there is no choice of $c=c_q$ that makes $\cB_{c,q}$ empty for all sets in $\bH$, as was the case for $\RR^n$ in the previous remark. 

Nonetheless, Corollary \ref{cor:Hcurves} shows that for \textit{curves} in the Heisenberg group that are a definite factor longer than their diameter, the collection $\cB_{c,q}$ must be small (for $q<4$ and appropriate $c=c_{q,r}$).
\end{remark}

\bibliography{TSTbib}{}
\bibliographystyle{plain}

\end{document}